\DeclareMathOperator*{\esssup}{ess\,sup} 
\DeclareMathOperator*{\argmin}{arg\,min} 
\theoremstyle{plain}
\newtheorem{theorem}{Theorem}[section]
\newtheorem{remark}{Remark}[section]
\newtheorem{proposition}{Proposition}[section]
\newtheorem{lemma}[theorem]{Lemma}
\theoremstyle{remark}
\newtheorem{definition}[theorem]{Definition}
\newtheorem*{example}{Example}
\begin{document}

\begin{frontmatter}
\title{Solving non-Markovian Stochastic Control Problems driven by Wiener Functionals}
\runtitle{ Non-Markovian Stochastic Control}

\begin{aug}
\author[A]{\fnms{Dorival}~\snm{Le\~ao}\ead[label=e1]{leao@estatcamp.com.br}},
\author[B]{\fnms{Alberto}~\snm{Ohashi}\ead[label=e2]{amfohashi@gmail.com}}
\and
\author[A]{\fnms{Francys A}~\snm{de Souza}\ead[label=e3]{francys@estatcamp.com.br}}
\address[A]{Estatcamp\printead[presep={,\ }]{e1,e3}}

\address[B]{Departmento de Matem\'atica,
Universidade de Bras\'ilia\printead[presep={,\ }]{e2}}
\end{aug}

\begin{abstract}

In this article, we present a general methodology for stochastic control problems driven by the Brownian motion filtration including non-Markovian and non-semimartingale state processes controlled by mutually singular measures. The main result of this paper
is the development of a numerical scheme for computing near-optimal controls associated with controlled Wiener functionals via a finite-dimensional approximation procedure. The approach does not require functional differentiability assumptions on the value process and ellipticity conditions on the diffusion components. The general convergence of the method is established under rather weak conditions for distinct types of non-Markovian and non-semimartingale states. Explicit rates of convergence are provided in case the control acts only on the drift component of the controlled system. Near-closed/open-loop optimal controls are fully characterized by a dynamic programming algorithm and they are classified according to the strength of the possibly underlying non-Markovian memory. The theory is applied to stochastic control problems based on path-dependent SDEs and rough stochastic volatility models, where both drift and possibly degenerated diffusion components are controlled. Optimal control of drifts for nonlinear path-dependent SDEs driven by fractional Brownian motion with exponent $H\in (0,\frac{1}{2})$ is also discussed. Finally, we present a simple numerical example to illustrate the method.


\end{abstract}

\begin{keyword}[class=MSC]
\kwd[Primary ]{93E20}
\kwd[; secondary ]{ 60H30}
\end{keyword}

\begin{keyword}
\kwd{Stochastic control}
\kwd{Stochastic Analysis}
\end{keyword}

\end{frontmatter}
\tableofcontents

\section{Introduction}
Let $\mathbf{C}_{n,T}$ be the set of continuous functions from $[0,T]$ to $\mathbb{R}^n$, let $\xi:\mathbf{C}_{n,T}\rightarrow\mathbb{R}$ be a Borel functional, let $\mathbb{F} = (\mathcal{F}_t)_{t\ge 0}$ be a fixed filtration and let $U^T_t; 0\le t \le T$ be a family of admissible $\mathbb{F}$-adapted controls defined over $(t,T]$. The goal of this paper is to develop a numerical scheme to solve a generic stochastic optimal control problem of the form

\begin{equation}\label{INTROpr1}
\sup_{\phi\in U^T_0}\mathbb{E}\big[\xi\big(X^\phi\big)\big],
\end{equation}
where $\{X^\phi; \phi\in U^T_0\}$ is a given family of $\mathbb{F}$-adapted controlled continuous processes. A common approach for the study of (\ref{INTROpr1}) (see e.g \cite{Davis_79, elkaroui}) is to consider for each control $u\in U_0^T$, the value process given by

\begin{equation}\label{valueINTR}
V(t,u) = \esssup_{\phi;\phi=u~\text{on}~[0,t]}\mathbb{E}\big[\xi\big(X^\phi\big)|\mathcal{F}_t\big]; 0\le t \le T.
\end{equation}
Two fundamental questions in stochastic control theory rely on sound characterizations of value processes and the development of concrete methods to produce either exact optimal controls $u^*\in U^T_0$ (when exists)

$$
\mathbb{E}\big[\xi(X^{u^*})\big] = \sup_{\phi\in U^T_0}\mathbb{E}\big[\xi\big(X^\phi\big)\big],
$$
or near-optimal controls (see e.g \cite{zhou}) which realize
\begin{equation}\label{INTROpr3}
\mathbb{E}\big[\xi(X^{u^*})\big] \ge \sup_{\phi\in U^T_0}\mathbb{E}\big[\xi\big(X^\phi\big)\big]-\epsilon,
\end{equation}
for an arbitrary error bound $\epsilon>0$. Exact optimal controls may fail to exist due to e.g lack of convexity. In this context, the standard approach is to consider near-optimal controls which exist under minimal hypotheses and are sufficient in most applications.


Two major tools for studying stochastic controlled systems are Pontryagin's maximum principle and Bellman's dynamic programming. While these two methods are known to be very efficient for establishing some key properties (e.g existence of optimal controls, smoothness of the value functional, sufficiency of subclasses of controls, etc), the problem of solving explicitly or numerically a given stochastic control problem remains a critical issue in the field of control theory. Indeed, except for a very few specific cases, the determination of an optimal control (either exact or near) is a highly nontrivial problem to tackle. 


In the Markovian case, a classical approach in solving stochastic control problems is given by the dynamic programming principle based on Hamilton-Jacobi-Bellman (HJB) equations. One popular approach is to employ verification arguments to check if a given solution of the HJB equation coincides with the value function at hand, and obtain as a byproduct the optimal control. Discretization methods also play an important role towards the resolution of the control problem. In this direction, several techniques based on Markov chain discretization schemes \cite{kushner2}, Krylov's  regularization and shaking coefficient techniques (see e.g \cite{krylov1,krylov2}) and Barles-Souganidis-type monotone schemes \cite{barles} have been successfully implemented. We also refer the more recent probabilistic techniques on fully non-linear PDEs given by \cite{fahim} and the randomization approach of \cite{pham, pham1,pham2}.

\subsection{Review of literature}
Beyond the Markovian context, the value process (\ref{valueINTR}) cannot be reduced to a deterministic PDE and the control problem (\ref{INTROpr1}) is much more delicate. The work \cite{nutz2} employs techniques from quasi-sure analysis to characterize one version of the value process as the solution of a second order backward stochastic differential equation (2BSDE) (see \cite{soner}) under a non-degeneracy condition on the diffusion component of a controlled non-Markovian stochastic differential equation driven by Brownian motion (henceforth abbreviated by CNM-SDE-BM). The work \cite{nutz1} derives a dynamic programming principle in the context of model uncertainty and nonlinear expectations. Inspired by \cite{pham}, under the weak formulation of the control problem, \cite{fuhrman} shows a value process can be reformulated under a family of dominated measures on an enlarged filtered probability space where the CNM-SDE-BM might be degenerated. It is worth to mention that under a nondegeneracy condition on diffusion components of CNM-SDEs-BM, (\ref{valueINTR}) can also be viewed as a fully nonlinear path-dependent PDE in the sense of \cite{touzi2} via its relation with 2BSDEs (see section 4.3 in \cite{touzi2}). In this direction, \cite{possamai} derived a dynamic programming principle for a stochastic control problem w.r.t a class of nonlinear kernels. They obtained a well-posedness result for general 2BSDEs and established a link with path-dependent PDEs in possibly degenerated cases. We also drive attention to \cite{qiu} who characterizes (\ref{valueINTR}) driven by a CNM-SDE-BM as a solution of a suitable HJB-type equation and uniqueness is established under a non-degeneracy condition on the diffusion components. Under strong a priori functional differentiability conditions (in the sense of \cite{dupire,cont1}) imposed on (\ref{valueINTR}), one can apply functional It\^o's formula to arrive at verification-type theorems. In this direction, we refer to e.g. \cite{cont2} and \cite{saporito}.

Discrete-type schemes for the optimal value (\ref{INTROpr1}) driven by CNM-SDEs-BM were studied by \cite{dolinsky}, \cite{zhang2}, \cite{ren} and \cite{tan}. In \cite{zhang2,ren}, the authors provide monotone schemes in the spirit of Barles-Souganidis for fully nonlinear path-dependent PDEs in the sense of \cite{touzi2} and hence one may apply their results for the study of (\ref{INTROpr1}). Under elipticity conditions, by employing weak convergence methods in the spirit of Kushner and Depuis, \cite{tan} provides a discretization method for the optimal value (\ref{INTROpr1}). Convergence rates are available only under strong regularity conditions on the value process (\ref{valueINTR}) (see \cite{zhang2}) or in the state independent case \cite{dolinsky,tan}.

Recently, a dynamic programming principle for stochastic Volterra equations was proved by \cite{viens}, by extending the framework of \cite{dupire} to a more general setting involving fractional Brownian motions and related processes. See also \cite{wang}. Infinite-dimensional Riccati-type equations associated with Linear-Quadratic control problems driven by linear stochastic Volterra equations were derived by \cite{jaber1,jaber2}. In this direction, we also drive attention to the early works by \cite{Hu1} and \cite{duncan}.


\subsection{Main setup and contributions}
The main contribution of this paper is the development of a numerical scheme for computing near optimal controls of stochastic systems adapted to the Brownian motion filtration and parameterized by possibly mutually singular measures. The methodology is based on a weak version of functional It\^o calculus developed by \cite{LEAO_OHASHI2017.1} and inspired by \cite{LEAO_OHASHI2013}. For a given sequence $\epsilon_k\downarrow 0$ and a given $d$-dimensional Brownian motion, we consider a discrete-type skeleton $\mathscr{D} = \{\mathcal{T},A^{k}; k\ge 1\}$ (see Definition \ref{discreteskeleton}), where $A^k$ is a sequence of stepwise-constant martingales with jumps $|\Delta A^k(T^k_n)|\le \epsilon_k$ and jumping times $\mathcal{T} = \{T^k_n; n\ge 1\}$.

The stochastic control problem (\ref{INTROpr3}) is replaced by

$$
\mathbb{E}\big[\xi(X^{k,u^{k,*}})\big] \ge \sup_{u^k\in U^{k,e(k,T)}_0}\mathbb{E}\big[\xi\big(X^{k,u^k}\big)\big]-\epsilon,
$$
where the controlled state $\mathcal{X} = \{X^{k,u^k}; u^k\in U_0^{k,e(k,T)}\}$ is an $\mathscr{D}$-adapted stepwise constant process,
where $e(k,T)$ is a suitable number of steps to recover (\ref{INTROpr1}) over the entire period $[0,T]$ as the discretization level $k$ goes to infinity and the class of controls $U^{k,e(k,T)}_0$ is constituted by $\mathscr{D}$-adapted stepwise constant controls $u^k$ with jumps $\{u^k_j; j=0,\ldots, e(k,T)-1\}$.

By using measurable selection arguments, we \textit{aggregate} a $\mathscr{D}$-adapted version $\big((V^k)_{k\ge 0},\mathscr{D}\big)$ of (\ref{valueINTR}) into a finite sequence of upper semianalytic value functions $\mathbb{V}^k_m:\mathbb{H}^{k,m}\rightarrow\mathbb{R}; m=0,\ldots, e(k,T)-1$. Here, $\mathbb{H}^{k,m}$ is the $m$-fold Cartesian product of $\mathbb{W}_k\times \mathbb{R}^n$, where $\mathbb{W}_k$ is suitable finite-dimensional space which accommodates the dynamics of the structure $\mathscr{D}$ and $\mathbb{R}^n$ is the state-space. Starting from a given controlled state $\mathbb{V}^k_{e(k,T)}$, the value functionals satisfy

\begin{eqnarray}\label{DPPInt}
\mathbf{U}^k_j(\mathbf{o}^k_j,\theta) &:=& \int_{\mathbb{W}_k}\mathbb{V}^{k}_{j+1}\Big(\mathbf{o}^{k}_{j}, \mathfrak{X}^k_{j+1}(\theta, \mathbf{o}^k_{j}, w^k)\Big)\nu^k(dw^k)\\
\nonumber\mathbb{V}^k_j(\mathbf{o}^k_j) &:=& \sup_{\theta\in \mathbb{A}}\mathbf{U}^k_j(\mathbf{o}^k_j,\theta),~j=e(k,T)-1, \ldots, 0.
\end{eqnarray}
In (\ref{DPPInt}), $\mathfrak{X}^k_{j+1}$ is a transition kernel of $\mathcal{X}$ which represents the jump of the controlled state $\mathcal{X}$ from $j$ to $j+1$. The variable $\theta$ is a constant control in a compact action space $\mathbb{A}$, $\mathbf{o}^k_j$ is a path which carries all the possible observed controlled states and, eventually, the noise history until step $j$. The variable $w^k$ is the noise variable which has to be integrated w.r.t the law $\nu^k$ of $(\Delta T^k_1,\Delta A^k(T^k_1))$.

This article shows that the dynamic programming equation (\ref{DPPInt}) constitutes the building block to solve stochastic control problems much beyond the classical Markovian states, traditionally treated by Markov chain approximations and PDE methods.
The main results of this paper (Theorem \ref{absrate1} and \ref{THdriftrate}) show that the recursive functional equation (\ref{DPPInt}) allows us to compute near optimal controls for (\ref{INTROpr3}) as the discretization level $\epsilon_k\downarrow 0$. None differentiability condition on the value process (\ref{valueINTR}) is required and the scheme is implementable using e.g regression Monte-Carlo techniques.


As a test of the relevance of the theory, we show that our methodology can be applied to controlled Wiener functionals of the form

$$
X^u = F\big(B,(\alpha,\sigma)(X^u,u)  \big),
$$
where the non-anticipative functionals $(\alpha,\sigma)$ admit Lipschitz regularity w.r.t. control variables which in turn implies $L^2$-Lipschitz regularity of $u \mapsto X^u$ w.r.t control processes (Assumption (B1) in section \ref{basics}). In the sequel, we denote $Y_t = \{Y(s); 0\le s\le t\}$ as the path of a process $Y$ until a time $t$. We investigate three typical examples illustrating distinct types of path-dependency which fit into the theory developed in this paper:

\begin{equation}\label{caseA}
dX^u(t) = \alpha(t,X^u_t,u(t))dt + \sigma(t,X^u,u(t))dB(t),
\end{equation}

\begin{equation}\label{caseB}
dX^u(t) = \alpha(t,X^u_t),u(t))dt + \sigma dB_H(t),
\end{equation}
and

\begin{equation}\label{caseC}
\left\{
\begin{array}{lc}
dX^u(t) = X^u(t) \mu(u(t))dt + X^u(t)  \vartheta(Z(t),u(t)) dB(t) \\
dZ(t) = \nu dB_H(t)- \beta(Z(t)-m)dt,
\end{array}
\right.
\end{equation}
where $B_H$ is a fractional Brownian motion (henceforth abbreviated by FBM) with exponent $0 < H < \frac{1}{2}$. In case of (\ref{caseA}), the driving state is a CNM-SDE-BM and the lack of Markov property is due to non-anticipative functionals $\alpha$ and $\sigma$ which may depend on the whole path of $X^u$. In this case, the controlled state $X^u$ satisfies a pseudo-Markov property in the sense of \cite{claisse}. Case (\ref{caseB}) illustrates a fully non-Markovian case: the controlled state $X^u$ is driven by a path-dependent nonlinear drift and by a very singular transformation of the Brownian motion into a non-Markovian and non-semimartingale noise. In particular, there is no probability measure on the path space such that the controlled state in (\ref{caseB}) is a semimartingale. Case (\ref{caseC}) has been devoted to a considerable attention in recent years in the context of rough stochastic volatility models when $0 < H < \frac{1}{2}$. In this direction, see e.g \cite{bayer} and other references therein.

The controlled dynamics (\ref{caseA}), (\ref{caseB}) and (\ref{caseC}) can be analyzed via Euler-type schemes of the form


$$
\Delta X^{k,u^k}(T^k_n)=\alpha \Big(T^k_{n-1},X^{k,u^k}_{T^k_{n-1}},u^k_{n-1}\Big)\Delta {T^k_{n}}
+ \Sigma \Big(T^{k}_{n-1},X^{k,u^k}_{T^{k}_{n-1}},u^k_{n-1},\ell_n (\mathcal{A}^k_n)\Big),
$$
for $1\le n\le e(k,T)$, where $\mathcal{A}^k_n = \big(\Delta T^k_1, \Delta A^k(T^k_1),\ldots, \Delta T^k_n, \Delta A^k(T^k_n)\big)$ and $\ell_n$ is a functional of the noise path $\mathcal{A}^k_n$ which distinguishes distinct types of non-Markovian states. Proposition \ref{openclosed} shows that closed / open loop optimal controls associated with (\ref{DPPInt}) are closely related to the strength of the possibly underlying non-Markovian memory. For instance, (\ref{caseA}) and (\ref{caseB}) can be analyzed by setting

$$
\Sigma \Big(T^{k}_{n-1},X^{k,u^k}_{T^{k}_{n-1}},u^k_{n-1},\ell_n (\mathcal{A}^k_n)\Big) = \sigma \Big( T^k_{n-1}, X^{k,u^k}_{T^{k}_{n-1}}, u^k_{n-1} \Big)\ell_n (\mathcal{A}^k_n).
$$
In the case (\ref{caseA}), we have $\ell_n(\mathcal{A}^k_n) = \Delta A^k(T^k_n)$ and the variable $\mathbf{o}^k_j$ in the transition kernel $\mathfrak{X}^k$ in (\ref{DPPInt}) is just a functional of the state. In this case, the optimal control $u^{k,*}$ derived from (\ref{DPPInt}) turns out to be a feedback control which is adapted w.r.t the controlled-state $X^{k,u^{k,*}}$. In the case (\ref{caseB}), the additional extrinsic memory generated by the FBM are encoded by $\ell_n(\mathcal{A}^k_n) = \Delta B^k_H(T^k_n)$, where $B^k_H$ is an $\mathscr{D}$-adapted discrete version of the FBM. The increments of FBM inserts a nontrivial memory in such way that the variable $\mathbf{o}^k_j$ in (\ref{DPPInt}) carries the whole noise path until time $j$ and the optimal control $u^{k,*}$ derived from (\ref{DPPInt}) turns out to be open-loop. The rough volatility case (\ref{caseC}) exhibits a more sophisticated type of memory

$$
\Sigma \Big(T^{k}_{n-1},X^{k,u^k}_{T^{k}_{n-1}},u^k_{n-1},\ell_n (\mathcal{A}^k_n)\Big) = X^{k,u^k}(T^{k}_{n-1}) \vartheta(Z^k(T^k_{n}),u^k_{n-1})\Delta A^{k,1}(T^k_n),
$$
where $\ell_n(\mathcal{A}^k_n) = (Z^k(T^k_{n}), \Delta A^{k,1}(T^k_n))$ and $Z^k(T^k_n)$ is an $\mathscr{D}$-adapted discrete version of the fractional Ornstein-Uhlenbeck process (see e.g \cite{cheridito}) considered as a functional of $B^k_H$. Similar to (\ref{caseB}), the optimal control $u^{k,*}$ derived from (\ref{DPPInt}) turns out to be open-loop.

Theorem \ref{absrate1} and \ref{THdriftrate} present the convergence analysis of the numerical scheme. In general, if $\xi$ is $\gamma$-H\"older continuous on the space of c\`adl\`ag paths (Assumption (A1) in section \ref{basics}), then for a given $0 < \beta < 1$ and $\epsilon >0$, we have

\begin{equation}\label{r1intr}
\Bigg|\sup_{\phi\in U^{k,e(k,T)}_0}\mathbb{E}\big[\xi\big(X^{k,u^k}\big)\big] - \sup_{\phi\in U^{T}_0}\mathbb{E}\big[\xi\big(X^{u}\big)\big]\Bigg|\lesssim \Big\{h_k^\gamma + |L_\epsilon|^\gamma \epsilon_k^{\gamma\beta}\Big\} + \epsilon\rightarrow 0,
\end{equation}
as $k\rightarrow +\infty$.

We now discuss the terms $(\epsilon^{\beta}_k, h_k, L_\epsilon)$ which constitute the fundamental quantities for the general convergence of the method for given  $\epsilon>0, 0 < \beta <1$ and a chosen sequence $\lim_{k\downarrow0}\epsilon_k=0$ associated with the Brownian motion approximation $\mathscr{D} = \{\mathcal{T}, A^{k}; k\ge 1\}$ as described in Definition \ref{discreteskeleton}. The component $\epsilon^{\gamma\beta}_k$ in the righ-hand side of (\ref{r1intr}) is a function of the quantity $\{\mathbb{E}|T-T^k_{e(k,T)}|^2\}^{\frac{\gamma}{2}}$ which is analyzed by large deviation principles (see Section \ref{LDappendix} and Lemma \ref{Tkrate}). The component $h_k$ is related to the convergence rate of $\mathcal{X}$ to the target controlled state $X$ which drives the control problem (\ref{INTROpr3}). In typical non-Makovian situations as described by the examples (\ref{caseA}), (\ref{caseB}) and (\ref{caseC}), the rate $h_k$ can be explicitly computed (see Propositions \ref{mainPrSDEBM}, \ref{mainPrSDEFBM} and \ref{mainPrRV}). The presence of the constant $L_\epsilon$ in (\ref{r1intr}) is due to a Lipschitz-type condition imposed on an approximation scheme for the generic controls in $U^T_0$ (see Lemma \ref{Liplemma}) and $L_\epsilon\rightarrow +\infty$ as $\epsilon \downarrow 0$. In general, under Assumptions (A1-B1), for a given $\epsilon >0$, (\ref{r1intr}) ensures the convergence of the method for general controlled states including (\ref{caseA}), (\ref{caseB}) and (\ref{caseC}) (see Theorem \ref{absrate1}). The explicit convergence rate is established in case the control only affects the drift in the examples (\ref{caseA}), (\ref{caseB}) and (\ref{caseC}). In this case, under a suitable convexity assumption (see Assumption (F1) in section \ref{pcdsection}), Theorem \ref{THdriftrate} says that one can take $L_\epsilon =0$ and the leading term in the right-hand side of (\ref{r1intr}) becomes only a function of $h^\gamma_k$. The full explicit convergence rate of the method depends on the unknown asymptotic rate $\lim_{\epsilon\rightarrow+\infty} L_\epsilon=+\infty$ and we postpone this analysis to a future project. For further details on the appearance of $L_\epsilon$ in (\ref{r1intr}), we refer the reader to Remark \ref{Liprem}.

The remainder of this article is organized as follows. Section \ref{basics} presents some notations and summarizes the standing assumptions of this article. Section \ref{CIDSsection} presents the basic discretization scheme and its application to the control problem. Section \ref{mainrsection} presents the main results of this paper and a discussion of the dynamic programming algorithm associated with the discrete version of the nonlinear expectation. Section \ref{Eulerproofs} presents the precise rates $h_k$ in (\ref{r1intr}) associated with (\ref{caseA}), (\ref{caseB}) and (\ref{caseC}). Section \ref{numersection} presents a pseudo-code for our method jointly with a simple illustrative numerical example. Appendix \ref{LDappendix} presents the large deviation-type results associated with the convergence rate of the method.  Appendix \ref{MSTappendix} presents the technical details on the measurable selection arguments associated with the proof of Theorem \ref{absrate1}.


\section{Controlled stochastic processes}\label{basics}
\subsection{Notation}
At first, we introduce some notation. In the sequel, finite-dimensional spaces will be equipped with a norm $|\cdot|$ and $T$ is a finite terminal time. We further write $a\lesssim b$ for two positive quantities $a$ and $b$ to express an estimate of the form $a \le C b$, where $C$ is a generic constant which may differ from line to line. We use the notation $a\vee b:= \max\{a,b\}$ and $a\wedge b:= \min\{a,b\}$ for $a,b \in \mathbb{R}$. Throughout this article, $(\Omega, \mathbb{F},\mathbb{P})$ is the classical Wiener space, where $\Omega  = C(\mathbb{R}_+; \mathbb{R}^d)$ is equipped with the uniform convergence on compact sets of $\mathbb{R}_+$, $\mathbb{F}:=(\mathcal{F}_t)_{t\ge 0}$ is the
usual $\mathbb{P}$-augmentation of the filtration generated by the coordinate process, $\mathbb{P}$ is the Wiener measure. The $d$-dimensional Brownian motion will be denoted by $B = \{B^{1},\ldots,B^{d}\}$.

Let $L^p_a(\mathbb{P}\times Leb)$ be the Banach space of all $\mathbb{F}$-adapted finite-dimensional processes $Y$ such that

$$\mathbb{E}\int_0^T|Y(t)|^pdt < \infty,$$
for $1\le p< \infty$. Let us denote

$$\|f\|_\infty:=\sup_{0\le t\le T}|f(t)|.$$


The action space is a compact set

$$\mathbb{A}:=\{(x_1, \ldots, x_r)\in \mathbb{R}^r; \max_{1\le i\le r}|x_i|\le \bar{a}\},$$
for a given fixed $0 < \bar{a}< +\infty$ and positive integer $r\ge 1$.

In the sequel, we will denote $\mathbf{D}_{n,T}:= \{h:[0,T]\rightarrow \mathbb{R}^n~\text{with c\`adl\`ag paths}\}$ and we equip this linear space with the uniform convergence on $[0,T]$.
\subsection{Controlled Wiener functionals}
For a pair of finite $\mathbb{F}$-stopping times $(M,N)$, we denote

$$
]]M,N ]]: = \{(\omega,t); M(\omega) < t \le N(\omega)\},
$$
and $]] M, +\infty[[:=\{(\omega,t); M(\omega)< t < +\infty\}.$
In order to set up the basic structure of our control problem, we first need to define the class of admissible control processes: for each pair $(M,N)$ of a.s finite $\mathbb{F}$-stopping times such that $M < N~a.s$, we denote~\footnote{Whenever necessary, we can always extend a given $u\in U^N_M$ by setting $u = 0$ on the complement of a stochastic set $]]M,N]]$.}

$$
U^{N}_{M}:=\{\text{the set of all}~\mathbb{F}-\text{predictable processes}~u:~]]M,N]]\rightarrow \mathbb{A}; u(M+)~\text{exists}\}.
$$
For such family of processes, we observe they satisfy the following properties:

\

\begin{itemize}\label{3-proper}
  \item Restriction: $u\in U_M^N\Rightarrow u\mid_{]]M,P]]}\in U_{M}^P$ for $M < P \le N$~a.s.
  \item Concatenation: If $u\in U_{M}^N$ and $v\in U_N^P$ for $M<N< P~a.s$, then $(u\otimes_N v)(\cdot)\in U_M^P$, where
  \begin{equation}\label{concatenation}
  (u\otimes_N v)(r):=\left\{
\begin{array}{rl}
u(r); & \hbox{if} \ M < r \le N \\
v(r);& \hbox{if} \ N < r \le P.
\end{array}
\right.
\end{equation}
  \item Finite Mixing: For every $u,v\in U_M^N$ and $G\in \mathcal{F}_M$, we have
$$u\mathds{1}_G + v\mathds{1}_{G^c}\in U^N_M.$$
\end{itemize}

\begin{definition}\label{continuouscontrolled}
A continuous \textbf{controlled Wiener functional} is a map $X:U^T_0\rightarrow L^p_a(\mathbb{P}\times Leb)$ for some $p\ge 1$, such that for each $t\in [0,T]$ and $u\in U^T_0$, $\{X^u(s); 0\le s\le t\}$ depends on the control $u$ only on $(0,t]$ and $X^u$ has continuous paths for each $u\in U^T_0$.
\end{definition}



We now present the two standing assumptions of this article.

\

\noindent $\textbf{Assumption \text{(A1)}}$: The payoff $\xi:\mathbf{D}_{n,T}\rightarrow\mathbb{R}$ satisfies the following regularity assumption: There exists $\gamma \in (0,1]$ and a constant $\|\xi\|>0$ such that

$$|\xi(f) - \xi(g)|\le \|\xi\| \|f-g\|_\infty^\gamma,$$
for every $f,g\in \mathbf{D}_{n,T}$.

\

\noindent \textbf{Assumption (B1)}: There exists a constant $C$ such that

$$
\mathbb{E}\|X^u-X^\eta\|^2_{\infty}\le C \mathbb{E}\int_0^T|u(s)-\eta(s)|^2 ds,
$$
for every $u,\eta\in U^T_0$.

\begin{remark}
Path-dependent controlled SDEs with Lipschitz coefficients are typical examples of controlled Wiener functionals satisfying Assumption (B1). See Remark \ref{remarkassB1}
\end{remark}


For a given payoff $\xi:\mathbf{D}_{n,T}\rightarrow\mathbb{R}$ and a controlled Wiener functional $X:U^T_0\rightarrow\mathbf{B}^2(\mathbb{F})$, we denote

$$
\xi_X(u): = \xi\big(X^u\big); u\in U^T_0
$$
and
\begin{equation}\label{valuepdef}
V(t,u):=\text{ess}~\sup_{v\in U^T_t}\mathbb{E}\Big[\xi_X(u\otimes_t v)|\mathcal{F}_t\Big];0\le t< T,u \in U^T_0,
\end{equation}
where $V(T,u) := \xi_X(u)$~a.s. We stress the process $V(\cdot, u)$ has to be viewed backwards for each control $u\in U^T_0$. Throughout this paper, in order to keep notation simple, we omit the dependence of the value process in (\ref{valuepdef}) on the controlled Wiener functional $X$ and the payoff $\xi$, so we write $V$ meaning as a map $V:U^T_0\rightarrow L^1_a(\mathbb{P}\times Leb)$.

Since we are not assuming that $\mathcal{F}_0$ is the trivial $\sigma$-algebra, we cannot say that $V(0)$ is deterministic. However, the finite-mixing property on the class of admissible controls implies that $\{\mathbb{E}\big[\xi_X(u\otimes_t \theta)|\mathcal{F}_t\big]; \theta\in U_t^T\}$ has the lattice property (see e.g Def 1.1.2 \cite{lamberton}) for every $t\in [0,T)$ and $u\in U^T_t$. In this case, it is known

$$\mathbb{E}\big[V(0)\big] = \sup_{v\in U^T_0}\mathbb{E}\big[\xi(X^v)\big].$$

\begin{remark} \label{remark_consist}
One can easily check that under Assumptions (A1-B1), for any $u\in U^T_0$, $\{V(s, u); 0\le s\le t\}$ depends only on the control $u$ restricted to the interval $[0,t]$. Moreover, $u\mapsto V(\cdot,u)$ is a continuous controlled Wiener functional in the sense of Definition \ref{continuouscontrolled} and $V$ is an $U^T_0$-supermartingale, in the sense that $V(\cdot, u)$ is an $\mathbb{F}$-supermartingale for each $u\in U^T_0$.
\end{remark}

\begin{definition}\label{epsilonDEF}
We say that $u\in U^T_0$ is an $\epsilon$-optimal control if
\begin{equation}\label{optimaldef1}
\mathbb{E}\big[\xi_X(u)\big]\ge \sup_{\eta\in U^T_0}\mathbb{E}\big[\xi_X(\eta)\big]-\epsilon.
\end{equation}
In case, $\epsilon=0$, we say that $u$ realizing (\ref{optimaldef1}) is an exact optimal control.
\end{definition}

\section{Discrete-type skeleton for the Brownian motion and controlled imbedded discrete structures}\label{CIDSsection}

In this section, we introduce what we call a \textit{controlled imbedded discrete structure} which is a natural extension of the approximation models presented in \cite{LEAO_OHASHI2017.1}. Our philosophy is to view a controlled Wiener functional as a family of simplified models one has to build in order to extract the relevant information for the obtention of a concrete description of value processes and the construction of their associated (near or exact) optimal controls.

\subsection{The underlying discrete skeleton}\label{discretesec}
The discretization procedure will be based on a class of pure jump processes driven by suitable waiting times which describe the local behavior of the Brownian motion. We briefly recall the basic properties of this skeleton. For more details, we refer the reader to the work \cite{LEAO_OHASHI2017.1}. We start by constructing a sequence $\mathcal{T} := \{T^k_n; n\ge 0\}$ of hitting times which will be the basis for our discretization scheme. For a given
sequence $\epsilon_k$ such that $\epsilon_k\downarrow 0$ as $k\rightarrow +\infty$, we set $T^{k}_0:=0$ and

\begin{equation}\label{stopping_times}
T^{k}_n := \inf\{T^{k}_{n-1}< t <\infty;  | B(t) - B(T^{k}_{n-1}) | = \epsilon_k\}, \quad n \ge 1,
\end{equation}
and $|\cdot |$ in (\ref{stopping_times}) corresponds to the maximum norm on $\mathbb{R}^d$. This implies

$$
\Delta T^k_{n}:=T^k_n-T^k_{n-1}=\min_{j\in\{1,2,\dots, d\}}{\{\Delta^{k,j}_n\}}~a.s,
$$
where

\begin{equation*}
\Delta^{k,j}_n := \inf\{0< t <\infty;  | B^j(t+T^{k}_{n-1}) - B^j(T^{k}_{n-1}) | = \epsilon_k\}, \quad n \ge 1.
\end{equation*}
Then, we define $A^k :=(A^{k,1}, \cdots , A^{k,d})$ by

$$A^{k,j} (t) := \sum_{n=1}^{\infty} \left( B^j(T^{k}_{n}) - B^j(T^{k}_{n-1})  \right) \mathds{1}_{\{T^{k}_n\leq t \}};~t\ge0, ~ j=1, \ldots , d,$$
for integers $k\ge 1$.

The multi-dimensional filtration generated by $A^{k}$ is naturally characterized as follows. Let $\widetilde{\mathbb{F}}^k := \{\widetilde{\mathcal{F}}^k_t ; 0 \leq t <\infty\}$ be the filtration generated by $A^k$. We observe

$$
\widetilde{\mathcal{F}}^k_t \cap\{T^k_n \le t < T^k_{n+1}\} = \widetilde{\mathcal{F}}^k_{T^k_n}\cap \{T^k_n \le  t < T^k_{n+1}\}; t\ge 0,
$$
where $\widetilde{\mathcal{F}}^k_{T^k_n} = \sigma(A^{k}(s\wedge T^k_n); s\ge 0)$ for each $n\ge 0$. Let $\mathcal{F}^{k}_\infty$ be the completion of $\sigma(A^{k}(s); s\ge 0)$ and let $\mathcal{N}_{k}$ be the $\sigma$-algebra generated by all $\mathbb{P}$-null sets in $\mathcal{F}^{k}_\infty$. We denote $\mathbb{F}^{k} = (\mathcal{F}^{k}_t)_{t\ge 0}$, where $\mathcal{F}^{k}_t$ is the usual $\mathbb{P}$-augmentation (based on $\mathcal{N}_k$) satisfying the usual conditions.

\begin{definition}\label{discreteskeleton}
The structure $\mathscr{D} = \{\mathcal{T}, A^{k}; k\ge 1\}$ is called a \textbf{discrete-type skeleton} for the Brownian motion.
\end{definition}

By the strong Markov property, we observe that

\begin{enumerate}
  \item The jumps $\Delta A^{k,j}(T^k_n) = A^{k,j}(T^k_n)-A^{k,j}(T^k_{n}-); n=1, 2, \ldots$ are independent and identically distributed (iid).
  \item The waiting times $\Delta T^k_n; n=1, 2,\ldots$ are iid random variables in $\mathbb{R}_+$.
  \item The families $(\Delta A^{k,j}(T^k_n); n=1, 2, \ldots)$ and $(\Delta T^k_n; n=1, 2,\ldots)$ are independent.
\end{enumerate}
Moreover, it is immediate that $A^{k,j}$ is a square-integrable $\mathbb{F}^k$-martingale for each $j=1,\ldots, d$.
\begin{remark}
The skeleton given in Definition \ref{discreteskeleton} slightly differs from Def 2.1 in \cite{LEAO_OHASHI2017.1} and \cite{LEAO_OHASHI2017.2} if $d>1$. Indeed, the choice of the hitting times $\{T^k_n; n\ge 1\}$ differs from \cite{LEAO_OHASHI2017.1}. In the present work, we adopt (\ref{stopping_times}) rather than
$$
T^{k,j}_n := \inf\big\{T^{k,j}_{n-1}< t <\infty;  |B^{j}(t) - B^{j}(T^{k,j}_{n-1})| = \epsilon_k\big\}; \quad n \ge 1,
$$
for $j\in \{1,\ldots,d \}$. This allows us to reduce the complexity of the discretization scheme. Since $(A^{k,1}, \ldots, A^{k,d})$ is a $d$-dimensional stepwise constant martingale process satisfying properties (1), (2) and (3) above, then the basic underlying differential structure presented in \cite{LEAO_OHASHI2017.1} is still preserved. This article does not focus on representation results for controlled Wiener functionals. Hence the limiting differential structure will not be investigated and we leave this construction to a future work.
\end{remark}

Let us start to introduce a subclass $U^{k,T^k_n}_{T^k_\ell}\subset U_{T^k_\ell}^{T^k_n}; 0\le \ell < n < \infty$. For $\ell < n$, let $U^{k,T^k_n}_{T^k_\ell}$ be the set of $\mathbb{F}^k$-predictable processes of the form

\begin{equation}\label{controlform0}
v^k(t) = \sum_{j=\ell+1}^{n}v^{k}_{j-1}\mathds{1}_{\{T^k_{j-1}< t\le T^k_j\}}; \quad T^k_\ell < t \le T^k_n,
\end{equation}
where for each $j=\ell+1, \ldots, n$, $v^k_{j-1}$ is an $\mathbb{A}$-valued $\mathcal{F}^k_{T^k_{j-1}}$-measurable random variable. To keep notation simple, we use the shorthand notations

$$
U^{k,n}_\ell: = U^{k,T^k_n}_{T^k_\ell}; 0\le \ell < n,
$$
where $U^{k,\infty}_\ell$ is the set of all controls $v^k:~]]T^k_\ell, +\infty[[\rightarrow\mathbb{A}$ of the form

$$
v^k(t) = \sum_{j\ge \ell+1}v^{k}_{j-1}\mathds{1}_{\{T^k_{j-1}< t\le T^k_j\}}; \quad T^k_\ell < t,
$$
where $v^k_{j-1}$ is an $\mathbb{A}$-valued $\mathcal{F}^k_{T^k_{j-1}}$-measurable random variable for every $j\ge \ell+1$ for an integer $\ell \ge 0$.

We also use a shorthand notation for $u^k\otimes_{T^k_\ell} v^k$ as follows: with a slight abuse of notation, for $u^k\in U^{k,n}_0$ and $v^k\in U^{k,n}_{\ell}$ with $\ell < n$, we write

$$
(u^k\otimes_\ell v^k): = (u^k_0, \ldots, u^k_{\ell-1}, v^k_\ell, \ldots, v^k_{n-1} ).
$$
This notation is consistent because $u^k\otimes_{T^k_\ell} v^k$ only depends on the list of variables $(u^k_0, \ldots, u^k_{\ell-1}, v^k_\ell, \ldots, v^k_{n-1} )$ whenever $u^k:~]]0,T^k_n]]\rightarrow\mathbb{A}$ and $v^k:~]]T^k_\ell,T^k_n]]\rightarrow\mathbb{A}$ are controls of the form (\ref{controlform0}) for $\ell < n$.


Let us now introduce the analogous concept of controlled Wiener functional but based on the filtration $\mathbb{F}^k$. For this purpose, we need to introduce some further notations. Let us define

$$
e(k,t):= \Big\lceil \frac{\epsilon^{-2}_k t}{\chi_d}\Big\rceil; 0\le t\le T,
$$
where $\lceil x\rceil$ is the smallest integer greater or equal to $x\ge 0$ and

\begin{equation}\label{kappaDEF}
\chi_d:=\mathbb{E}\min\{\tau^1, \ldots, \tau^d\},
\end{equation}
where $(\tau^j)_{j=1}^d$ is an iid sequence of random variables with distribution $\inf\{t>0; |W(t)|=1\}$ for a real-valued standard Brownian motion $W$. From Lemma \ref{Tkrate}, for each $t\in [0,T]$, we know that
$$T^k_{e(k,t)}\rightarrow t~\text{a.s and in}~L^p(\mathbb{P}),$$
as $k\rightarrow+\infty$, for each $t\ge 0$ and $p\ge 1$.

\begin{remark}\label{growthlemma}
The number $e(k,T)$ should be interpretted as the number of necessary steps to compute a given nonlinear expectation (driven by a controlled Wiener functional) via a discrete-type dynamic programming equation. One can easily check that (see e.g \cite{Burq_Jones2008}) $0\le f(t) \le 1; t\ge 0$, where $f$ is the density of $\tau^1$ in (\ref{kappaDEF}). By Th 5 in \cite{gordon}, we get a lower bound

$$\frac{1}{2 d}\le \chi_d.$$
Therefore, for given $k\ge 1$ and $T$, the number of periods $e(k,T)$ grows no faster than the dimension of the driving Brownian motion.
\end{remark}

Let $O_T(\mathbb{F}^k)$ be the set of all stepwise constant $\mathbb{F}^k$-optional processes of the form

$$Z^k(t) = \sum_{n=0}^\infty Z^k(T^k_n)\mathds{1}_{\{T^k_n\le t\wedge T^k_{e(k,T)} < T^k_{n+1}\}}; 0\le t\le T,$$
where $Z^k(T^k_n)$ is $\mathcal{F}^k_{T^k_{n}}$-measurable for every $n\ge 0$ and $k\ge 1$.

Let us now present two concepts which will play a key role in this work.
\begin{definition}\label{GASdef}
A \textbf{controlled imbedded discrete structure} $\mathcal{Y} = \big((Y^k)_{k\ge 1},\mathscr{D}\big)$ consists of the following objects: a discrete-type skeleton $\mathscr{D}$ and a map $u^k\mapsto Y^{k,u^k}$ from $U^{k,e(k,T)}_0$ to $O_T(\mathbb{F}^k)$ such that

\begin{equation}\label{antiprop}
Y^{k,u^k}(T^k_{n+1})~\text{depends on the control only at}~(u^k_0, \ldots, u^k_n),
\end{equation}
for each integer $n\in \{0,\ldots,e(k,T)-1\}$.
\end{definition}

\begin{definition}\label{GASdefSTRONG}
A \textbf{strongly controlled Wiener functional} is a pair $(X,\mathcal{X})$, where $X$ is a controlled Wiener functional and $\mathcal{X} = \big((X^k)_{k\ge 1},\mathscr{D}\big)$ is a controlled imbedded discrete structure such that $\big\{\|X^{k,\phi}\|_\infty; \phi \in U^{k,e(k,T)}_0\big\}$ is uniformly integrable for each $k\ge 1$ and there exists a positive sequence $h_k \downarrow 0$ such that

\begin{equation}\label{keyassepsilon}
\sup_{\phi\in U^{k,e(k,T)}_0}\mathbb{E}\|X^{k,\phi} - X^{\phi}\|_{\infty}\lesssim h_k,
\end{equation}
for $k\ge 1$.

\end{definition}
The concepts of controlled imbedded discrete structures and strongly controlled Wiener functionals are nonlinear versions of the structures analyzed in \cite{LEAO_OHASHI2017.1}.

\begin{remark}
Although the notion of strongly controlled Wiener functional is fully determined by the filtration $\mathbb{F}^k$ and the controls in the set $U^{k,e(k,T)}_0$, sometimes it is useful to extend $\phi \mapsto X^{k,\phi}$ from $U^{k,e(k,T)}_0$ to $U^T_0$. In typical examples, this is possible and natural due to our choice of the discrete-type skeleton $\mathscr{D} = \{\mathcal{T}, A^k; k\ge 1\}$. See sections \ref{subexbm}, \ref{FBMsub} and \ref{rvsub}.
\end{remark}

Next, we provide concrete examples of controlled imbedded discrete structures for the three classes of controlled states that we study in this article. In the sequel, we make use of the following notation

$$
\omega_t: = \omega(t\wedge \cdot); \omega \in \mathbf{D}_{n,T}.
$$

This notation is naturally extended to processes. We say that $F$ is a \textit{non-anticipative} functional if it is a Borel mapping and

$$F(t,\omega) = F(t,\omega_t); (t,\omega)\in[0,T]\times \mathbf{D}_{n,T}.$$

\subsection{Path-dependent controlled SDEs driven by Brownian motion}\label{subexbm}
The first example is the following $n$-dimensional controlled SDE

\begin{equation}\label{pdsdeBM}
dX^u(t) = \alpha(t,X^u_t,u(t))dt + \sigma(t,X^u_t,u(t))dB(t); 0\le t\le T,
\end{equation}
with a given initial condition $X^u(0)=x_0\in \mathbb{R}^n$. We define $\Lambda_T:=\{(t,\omega_t); t\in [0,T]; \omega\in \mathbf{D}_{n,T}\}$ and we endow this set with the metric

$$d_{\theta}((t,\omega); (t',\omega')): = \|\omega_t - \omega'_{t'}\|_{\infty} + |t-t'|^\theta,$$
for $0 < \theta \le 1$. Then, $(\Lambda_T,d_{\theta})$ is a complete metric space equipped with the Borel $\sigma$-algebra. The coefficients of the SDE will satisfy the following regularity conditions:

\

\noindent \textbf{Assumption (C1)}: The non-anticipative mappings $\alpha: \Lambda_T\times \mathbb{A}\rightarrow \mathbb{R}^n$ and $\sigma:\Lambda_T\times\mathbb{A}\rightarrow \mathbb{R}^{n\times d}$ are Lipschitz continuous, i.e., there exists a pair of constants $K_{Lip}=(K_{1,Lip}, K_{2,Lip})$ such that

$$|\alpha(t,\omega, a) - \alpha(t',\omega',b)| + |\sigma(t,\omega,a) - \sigma(t',\omega',b)|
$$
$$\le K_{1,Lip}d_{\theta} \big((t,\omega); (t',\omega')\big) + K_{2,Lip}|a-b|,$$
for every $t,t'\in [0,T]$ and $\omega,\omega'\in \mathbf{D}_{n,T}$ and $a,b\in \mathbb{A}$. One can easily check by routine arguments that the SDE (\ref{pdsdeBM}) admits a strong solution and

$$
\sup_{u\in U^T_0}\mathbb{E}\|X^u_T\|^{2p}_{\infty}\le C(1+|x_0|^{2p})\exp(CT),
$$
where $X(0)=x_0$, $C$ is a constant depending on $T>0,p\ge 1$, $K_{Lip}$ and the compact set $\mathbb{A}$.

\begin{remark}\label{remarkassB1}
Due to Assumption (C1), one can apply Jensen and Burkholder-Davis-Gundy's inequalities to arrive at the following estimate: there exists a constant $C= C(d,T,K_{1,Lip},K_{2,Lip})$ such that

\begin{eqnarray*}
\mathbb{E}\|X^{u_1}_t - X^{u_2}_t\|^2_{\infty}&\le& C \int_0^t \mathbb{E}\|X^{u_1}_s - X^{u_2}_s\|^2_{\infty}ds\\
& &\\
&+& C \int_0^t \mathbb{E}| u^1(s) - u^2(s)|^2ds,
\end{eqnarray*}
for $0\le t\le T$ and $u_1,u_2\in U^T_0$. Then, by applying Gr\"{o}nwall's inequality, we observe the controlled SDE (\ref{pdsdeBM}) satisfies Assumption (B1).
\end{remark}

In the sequel, it is convenient to introduce the following notation: for each $t\ge 0$, we set

\begin{equation}\label{localt}
\bar{t}_k:=\max\{T^k_n; T^k_n \le t\}.
\end{equation}

Let us fix a control $\phi \in U^{T}_0$. At first, we construct an Euler-Maruyama-type scheme based on the random partition $(T^k_n)_{n\ge 0}$ as follows. Let us define $\mathbb{X}^{k,\phi}(0):=\mathbb{X}^{k,\phi}_0:=\mathbb{X}^{k,\phi}_{T^k_0} := x$ is the constant function $x$ over $[0,T]$. Let us define $\mathbb{X}^{k,\phi}(T^k_j):=(\mathbb{X}^{1,k,\phi}(T^k_j), \ldots, \mathbb{X}^{n,k,\phi}(T^k_j))$ as follows

\begin{eqnarray}
\nonumber\mathbb{X}^{k,\phi}(T^k_j)&:=&\mathbb{X}^{k,\phi}(T^k_{j-1}) + \int_{T^k_{j-1}}^{T^k_j}\alpha \Big(T^k_{j-1},\mathbb{X}^{k,\phi}_{T^k_{j-1}},\phi(s)\Big)ds\\
\label{Xeuler}& &\\
\nonumber&+& \int_{T^k_{j-1}}^{T^k_j}\sigma \Big(T^{k}_{j-1},\mathbb{X}^{k,\phi}_{T^{k}_{j-1}},\phi(s)\Big)dB(s),
\end{eqnarray}
for $j\ge 1$, where $\mathbb{X}^{k,\phi}(t) := \sum_{n=0}^\infty \mathbb{X}^{k,\phi}(T^k_n)\mathds{1}_{\{T^k_n \le t < T^k_{n+1}\}}; t\ge 0$. It is important to notice that if $\phi = (v^k_0, v^k_1, \ldots) \in U^{k,\infty}_0$, then

\begin{eqnarray}
\nonumber\mathbb{X}^{k,\phi}(T^k_j)&=&\mathbb{X}^{k,\phi}(T^k_{j-1}) +\alpha \Big(T^k_{j-1},\mathbb{X}^{k,\phi}_{T^k_{j-1}},v^k_{j-1}\Big)\Delta {T^k_{j}}\\
\label{Xeulerk}& &\\
\nonumber&+& \sigma \Big(T^{k}_{j-1},\mathbb{X}^{k,\phi}_{T^{k}_{j-1}},v^k_{j-1}\Big)\Delta A^k(T^k_j),
\end{eqnarray}
for $j\ge 1$.

Then, we set $\mathcal{X} = \big( (X^k)_{k\ge 1},\mathscr{D}\big)$ given by

\begin{equation}\label{xksde}
X^{k,\phi}(t):= \mathbb{X}^{k,\phi}(t\wedge T^k_{e(k,T)}),
\end{equation}
for $\phi\in U^{T}_0$.

\subsection{Path-dependent controlled SDEs driven by fractional Brownian motion}\label{FBMsub}
Let $X^u$ be the controlled Wiener functional of the form

\begin{equation}\label{limsdefbm}
dX^u(t) = \alpha(t,X_t,u(t))dt + \sigma dB_H(t),
\end{equation}
where $X(0)=x_0\in \mathbb{R}$, $\sigma$ is a constant, $\alpha$ is a non-anticipative functional satisfying Assumption (C1) and $B_H$ is a FBM with Hurst exponent $H \in (0,\frac{1}{2})$. For simplicity of presentation, we set the state dimension equals to one. Under Assumption C1, by a standard fixed point argument, one can show there exists a unique strong solution for (\ref{limsdefbm}) for each $u\in U^T_0$. Moreover, the following simple remark holds true.


\begin{remark}\label{remarkassB1FBM}
The controlled SDE (\ref{limsdefbm}) satisfies Assumption B1 due to modulus of continuity of $\alpha$ given by Assumption C1.
\end{remark}
Let $\mathbf{C}^\lambda_0$ be the space of $\lambda$-H\"{o}lder continuous real-valued functions on $[0,T]$ and starting at zero equipped with the usual norm. Let us define

\begin{equation*}
\begin{split}
K_{H,1}(t,s)&:=c_H t^{H-\frac{1}{2}}s^{\frac{1}{2}-H}(t-s)^{H-\frac{1}{2}},\\
K_{H,2}(t,s)&:=c_H(1/2-H)s^{\frac{1}{2}-H}\int_s^t u^{H-\frac{3}{2}}(u-s)^{H-\frac{1}{2}}du,
\end{split}
\end{equation*}
for $0< s < t$, where $c_H$ is a suitable constant (see e.g \cite{ohashifrancys}). For each $f \in \mathbf{C}^\lambda_0$, we define

\begin{equation}\label{LambdaH}
(\Lambda_Hf)(t):=\int_0^t [f(t)-f(s)]\partial_s K_{H,1}(t,s)ds - \int_0^t \partial_s K_{H,2}(t,s)f(s)ds,
\end{equation}
for $0\le t\le T.$
By Theorem 2.2 in \cite{ohashifrancys}, it is known that

$$B_H(t) = (\Lambda_H B)(t)~a.s.$$
In the sequel, we recall (see (\ref{localt})) $\bar{t}_k = \max\{T^k_n; T^k_n \le t\}$ and we define

$$
\bar{t}^{+}_k:= \min\{T^{k}_n ; \bar{t}_k < T^{k}_n\}\wedge T.
$$

The discrete structure for FBM with $0 < H< \frac{1}{2}$ is given by
$$B^{k}_H(t):=  \int_0^{\bar{t}_k} \partial_s K_{H,1}(\bar{t}_k,s)\big[A^{k}(\bar{t}_k) - A^{k}(\bar{s}^{+}_k)\big]ds - \int_0^{\bar{t}_k}\partial_s K_{H,2}(\bar{t}_k,s)A^{k}(s)ds.$$
We observe

$$B^k_{H}(t) = \sum_{n=0}^\infty B^k_{H}(T^k_n)\mathds{1}_{\{T^k_n\le t < T^k_{n+1}\}}; 0\le t\le T.$$

The controlled structure $\big((X^k)_{k\ge 1},\mathscr{D}\big)$ associated with (\ref{limsdefbm}) is given as follows: let us fix a control $\phi\in U^{T}_0$. Let us define

\begin{eqnarray}\label{eulerFBMd}
\mathbb{X}^{k,\phi}(T^k_{n})&:=&\mathbb{X}^{k,\phi}(T^k_{n-1}) + \int_{T^k_{n-1}}^{T^k_n}\alpha\Big(T^k_{n-1},\mathbb{X}^{k,\phi}_{T^k_{n-1}},\phi(s)\Big)ds\\
\nonumber& &\\
\nonumber&+& \sigma\Delta B^k_{H}(T^k_{n}),
\end{eqnarray}
where $\mathbb{X}^{k,\phi}(t) := \sum_{n=0}^\infty \mathbb{X}^{k,\phi}(T^k_n)\mathds{1}_{\{T^k_n \le t < T^k_{n+1}\}}; t\ge 0$. It is important to notice that if $\phi = (v^k_0, v^k_1, \ldots) \in U^{k,\infty}_0$, then

\begin{eqnarray}
\label{fulleFBM}\mathbb{X}^{k,\phi}(T^k_{n})&=&\mathbb{X}^{k,\phi}(T^k_{n-1}) + \alpha\Big(T^k_{n-1},\mathbb{X}^{k,\phi}_{T^k_{n-1}},v^k_{n-1}\Big)\Delta T^k_n\\
\nonumber& &\\
\nonumber&+& \sigma\Delta B^k_{H}(T^k_{n}),
\end{eqnarray}
for $n\ge 1$. We set $\mathcal{X} = \big( (X^k)_{k\ge 1},\mathscr{D}\big)$, where
\begin{equation}\label{xksdeFBM}
X^{k,\phi}(t):=\mathbb{X}^{k,\phi}(t\wedge T^k_{e(k,T)}); 0\le t\le T,
\end{equation}
for $\phi \in U^T_0$.

\subsection{Path-dependent controlled SDEs with rough stochastic volatility}\label{rvsub}
Let us now present the third type of non-Markovian controlled state investigated in this article:
\begin{equation}\label{priceprocess}
\left\{
\begin{array}{lc}
dX^u(t) = X^u(t) \mu(u(t))dt + X^u(t)  \vartheta(Z(t),u(t)) dB^1(t) \\
dZ(t) = \nu dW_H(t)- \beta(Z(t)-m)dt,& ~Z(0)=z_0,
\end{array}
\right.
\end{equation}
where $m=0$ (for simplicity), $\beta, \nu >0$ and $\vartheta,\mu$ satisfy the following assumption:

\

\noindent \textbf{Assumption (D1)}: $\vartheta$ is bounded and there exists a constant $C$ such that
$$
|\mu(a)-\mu(b)|\le C |a-b|, \quad |\vartheta(x,a) - \vartheta(y,b)|\le C \big\{|x-y| + |a-b| \big\}
$$
for every $x,y\in \mathbb{R}$, $a,b\in \mathbb{A}$.

\


The noise $W_H$ in (\ref{priceprocess}) is a FBM with exponent $H\in (0, \frac{1}{2})$. We assume $W_H$ is a functional of the Brownian motion

$$W:= \rho B^1 + \bar{\rho} B^2,$$
where $\bar{\rho}:=\sqrt{1-\rho^2}$ for $-1 < \rho < 1$. By Theorem 2.2 in \cite{ohashifrancys}, we can write

$$W_H = \rho(\Lambda_H B^1) + \bar{\rho}(\Lambda_HB^2),$$
where $\Lambda_H$ is given by (\ref{LambdaH}). For further details about rough stochastic volatility models, see e.g. \cite{bayer}. By It\^o's formula,

\begin{equation}\label{dade}
X^u(t) =X^u(0) \mathcal{E}\Big(\int \vartheta(Z(s),u(s))dB^1(s)  \Big)(t)\exp\Big(\int_0^t \mu(u(s))ds\Big),
\end{equation}
where $\mathcal{E}$ is the Dol\'eans-Dade exponential. It is well-known (see e.g Prop A1 \cite{cheridito})

$$Z(t) = e^{-\beta t}z_0 + \nu W_H(t) -\beta \nu e^{-\beta t}\int_0^t W_H(u)e^{\beta u}du; 0\le t \le T.$$
Representation (\ref{dade}), Th 1 in \cite{Grigelionis} and Assumption (D1) allow us to state

$$
\sup_{u\in U^T_0}\mathbb{E}\| X^u_T\|^p_\infty < \infty,
$$
for every $p\ge 1$. Moreover, a similar computation as explained in Remark \ref{remarkassB1} gives Assumption (B1) for the controlled state (\ref{priceprocess}). An imbedded discrete structure for the volatility process is given by

\begin{equation}\label{discreteroughOU}
Z^k(t) := e^{-\beta \bar{t}_k}z_0 + \nu W^k_H(t) -\beta \nu e^{-\beta \bar{t}_k}\int_0^{\bar{t}_k} W^k_H(u)e^{\beta \bar{u}_k}du,
\end{equation}
for $0\le t\le T$, where, for each $i=1,2$, we define

\begin{equation}\label{discreterough1}
B^{k,i}_H(t):=  \int_0^{\bar{t}_k} \partial_s K_{H,1}(\bar{t}_k,s)\big[A^{k,i}(\bar{t}_k) - A^{k,i}(\bar{s}^{+}_k)\big]ds - \int_0^{\bar{t}_k}\partial_s K_{H,2}(\bar{t}_k,s)A^{k,i}(s)ds,
\end{equation}
and
$$W^k_H(t):= \rho B^{k,1}_H(t) + \bar{\rho} B^{k,2}_H(t); 0\le t\le T.$$


Let us fix a control $\phi\in U^T_0$. Starting from $\mathbb{X}^{k,\phi}(0) = x$, we define
\begin{eqnarray}
\nonumber \mathbb{X}^{k,\phi}(T^k_j)&:=&\mathbb{X}^{k,\phi}(T^k_{j-1}) + \int_{T^k_{j-1}}^{T^k_j}\mathbb{X}^{k,\phi}(T^k_{j-1})\mu(\phi(s))ds\\
\label{Seuler}& &\\
\nonumber&+& \mathbb{X}^{k,\phi}(T^k_{j-1})\int_{T^k_{j-1}}^{T^k_j}\vartheta (Z^k(T^k_{j-1}),\phi(s))dB^1(s),
\end{eqnarray}
for $j\ge 1$. It is important to notice that if $\phi = (v^k_0, v^k_1, \ldots) \in U^{k,\infty}_0$, then

\begin{eqnarray}
\label{eufulrv}\mathbb{X}^{k,\phi}(T^k_{j})&=&\mathbb{X}^{k,\phi}(T^k_{j-1}) + \mathbb{X}^{k,\phi}(T^k_{j-1}) \mu(v^k_{j-1})\Delta T^k_j\\
\nonumber& &\\
\nonumber&+& \mathbb{X}^{k,\phi}(T^k_{j-1})\vartheta \Big(Z^k(T^k_{j-1}),v^k_{j-1} \Big)\Delta A^{k,1}(T^k_j),
\end{eqnarray}
for $j\ge 1$.
Then, we set $\mathcal{X} = \big( (X^k)_{k\ge 1},\mathscr{D}\big)$ given by
\begin{equation}\label{xksdeRV}
X^{k,\phi}(t):=\mathbb{X}^{k,\phi}(t\wedge T^k_{e(k,T)}); 0\le t\le T,
\end{equation}
for $\phi \in U^T_0$.
\subsection{The controlled imbedded discrete structure for the value process}\label{CIDSVALUEsection}
In this section, we are going to describe the canonical controlled imbedded discrete structure associated with an arbitrary value process

$$V(t,u) = \esssup_{v\in U_t^T}\mathbb{E}\big[\xi_X(u\otimes_t v)|\mathcal{F}_t\big]; u\in U^T_0, 0\le t\le T,$$
where the payoff $\xi$ satisfies Assumption (A1) and $X$ is an arbitrary controlled Wiener functional satisfying Assumption (B1). Throughout this section, we are going to fix a controlled imbedded discrete structure

\begin{equation}\label{controlledstate}
u^k\mapsto X^{k,u^k}
\end{equation}
and we set

$$
\xi_{X^k}(u^k):=\xi\big(X^{k,u^k}\big),
$$
for $u^k\in U^{k,e(k,T)}_0$. We assume $\big\{\|X^{k,\phi}\|_\infty; \phi \in U^{k,e(k,T)}_0\big\}$ is uniformly integrable for each $k\ge 1$. We then define

\begin{equation}\label{discretevalueprocess}
V^{k}(T^k_n, u^k):=\esssup_{\phi^k\in U^{k,e(k,T)}_n}\mathbb{E}\Big[\xi_{X^k}(u^k\otimes_n\phi^k)\big|\mathcal{F}^k_{T^k_{n}}\Big]; n=1,\ldots, e(k,T)-1,
\end{equation}
with boundary conditions

$$V^k(0):=V^k(0,u^k):=\sup_{\phi^k\in U^{k,e(k,T)}_0}\mathbb{E}\big[\xi_{X^k}(\phi^k)\big],\quad V^{k}(T^k_{e(k,T)}, u^k): = \xi_{X^k}(u^k).$$

This naturally defines the map $V^k:U^{k,e(k,T)}_0\rightarrow O_T(\mathbb{F}^k)$ with jumps $V^k(T^k_n,u^k); n=1, \ldots, e(k,T)$ for $u^k\in U^{k,e(k,T)}_0$. One should notice that $V^k(T^k_n, u^k)$ only depends on $u^{k,n-1}:=(u^k_0, \ldots, u^k_{n-1})$ so it is natural to write

$$V^k(T^k_n, u^{k,n-1}) :=V^k(T^k_n, u^k); u^k\in U^{k,e(k,T)}_0, 0\le n\le e(k,T),$$
with the convention that $u^{k,-1}:=\mathbf{0}$. By construction, $V^k$ satisfies (\ref{antiprop}) in Definition \ref{GASdef}.

Similar to the value process $V$, we can write a dynamic programming principle for $V^k$ where the Brownian filtration is replaced by the discrete-time filtration $\mathcal{F}^k_{T^k_n}; n=e(k,T)-1,\ldots, 0$.

\begin{lemma}\label{lattice2}
Let $0\le n\le e(k,T)-1$. For each $\phi^k$ and $\varphi^k$ in $U^{k,e(k,T)}_n$, there exists $\theta^k\in U^{k,e(k,T)}_n$ such that

$$\mathbb{E}\Big[\xi_{X^k}(\pi^k\otimes_n\theta^k)|\mathcal{F}^k_{T^k_{n}}\Big] =\mathbb{E}\Big[\xi_{X^k}(\pi^k\otimes_n\phi^k)|\mathcal{F}^k_{T^k_{n}}\Big]\vee \mathbb{E}\Big[\xi_{X^k}(\pi^k\otimes_n\varphi^k)|\mathcal{F}^k_{T^k_{n}}\Big]~a.s,$$
for every $\pi^k\in U^{k,n}_0$. Therefore, for each $\pi^k\in U^{k,n}_0$

$$\mathbb{E}\Bigg[\esssup_{\theta^k\in U^{k,e(k,T)}_n}\mathbb{E}\Big[\xi_{X^k}(\pi^k\otimes_n\theta^k)|\mathcal{F}^k_{T^k_{n}}\Big]\Big|\mathcal{F}^k_{T^k_{j}}\Bigg] = \esssup_{\theta^k\in U^{k,e(k,T)}_n}\mathbb{E}\Big[\xi_{X^k}(k,\pi^k\otimes_n\theta^k)|\mathcal{F}^k_{T^k_{j}}\Big]~a.s,$$
for $0\le j\le n$ and $0\le n\le e(k,T)-1$.
\end{lemma}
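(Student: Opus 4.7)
The plan is to handle the two assertions separately. The first is a lattice (upward directed) property and is a direct application of the \textbf{finite mixing} property of the admissible control sets listed in Section \ref{basics} (transferred from $U^T_0$ to $U^{k,e(k,T)}_0$), while the second is the standard tower identity for essential suprema of an upward directed family.

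For the lattice property, I would fix $\pi^k\in U^{k,n}_0$ and $\phi^k,\varphi^k\in U^{k,e(k,T)}_n$ and define
\[
G:=\bigl\{\mathbb{E}[\xi_{X^k}(\pi^k\otimes_n\phi^k)\mid\mathcal{F}^k_{T^k_n}]\ \ge\ \mathbb{E}[\xi_{X^k}(\pi^k\otimes_n\varphi^k)\mid\mathcal{F}^k_{T^k_n}]\bigr\}\in\mathcal{F}^k_{T^k_n}.
\]
Because any $\theta^k\in U^{k,e(k,T)}_n$ is parametrized by variables $\theta^k_{n},\dots,\theta^k_{e(k,T)-1}$ with $\theta^k_j\in\mathcal{F}^k_{T^k_j}$ and $\mathcal{F}^k_{T^k_n}\subset\mathcal{F}^k_{T^k_j}$ for $j\ge n$, I can define $\theta^k_j:=\phi^k_j\mathds{1}_G+\varphi^k_j\mathds{1}_{G^c}$; the resulting $\theta^k$ lies in $U^{k,e(k,T)}_n$ by finite mixing applied componentwise. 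Then $\pi^k\otimes_n\theta^k$ agrees with $\pi^k\otimes_n\phi^k$ on $G$ and with $\pi^k\otimes_n\varphi^k$ on $G^c$, hence by the pathwise nature of the skeleton dynamics the controlled path $X^k(\cdot,\pi^k\otimes_n\theta^k)$ also splits along $G$, giving
\[
\xi_{X^k}(\pi^k\otimes_n\theta^k)=\xi_{X^k}(\pi^k\otimes_n\phi^k)\mathds{1}_G+\xi_{X^k}(\pi^k\otimes_n\varphi^k)\mathds{1}_{G^c}.
\]
Since $G\in\mathcal{F}^k_{T^k_n}$, pulling $\mathds{1}_G,\mathds{1}_{G^c}$ out of the conditional expectation produces exactly the maximum of the two conditional expectations on the right-hand side.

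For the second assertion, the first step just shown implies that the family
\[
\mathcal{H}_{\pi^k}:=\bigl\{\mathbb{E}[\xi_{X^k}(\pi^k\otimes_n\theta^k)\mid\mathcal{F}^k_{T^k_n}]:\theta^k\in U^{k,e(k,T)}_n\bigr\}
\]
is upward directed. By the standard result on essential suprema of directed families (see e.g.\ Neveu, or Def.~1.1.2 of \cite{lamberton}), there exists a sequence $\theta^{k,m}\in U^{k,e(k,T)}_n$ with $\mathbb{E}[\xi_{X^k}(\pi^k\otimes_n\theta^{k,m})\mid\mathcal{F}^k_{T^k_n}]\uparrow \esssup_{\theta^k}\mathbb{E}[\xi_{X^k}(\pi^k\otimes_n\theta^k)\mid\mathcal{F}^k_{T^k_n}]$ a.s. Uniform integrability of $\{\|X^k(\phi)\|_\infty\}$ together with Assumption (A1) makes the family dominated, so by conditional monotone/dominated convergence and the classical tower property,
\[
\mathbb{E}\Bigl[\esssup_{\theta^k\in U^{k,e(k,T)}_n}\mathbb{E}[\xi_{X^k}(\pi^k\otimes_n\theta^k)\mid\mathcal{F}^k_{T^k_n}]\Bigm|\mathcal{F}^k_{T^k_j}\Bigr]=\lim_{m}\mathbb{E}[\xi_{X^k}(\pi^k\otimes_n\theta^{k,m})\mid\mathcal{F}^k_{T^k_j}],
\]
and the right-hand side is bounded above by $\esssup_{\theta^k}\mathbb{E}[\xi_{X^k}(\pi^k\otimes_n\theta^k)\mid\mathcal{F}^k_{T^k_j}]$. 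The reverse inequality is immediate: for every $\theta^k\in U^{k,e(k,T)}_n$, iterated conditioning gives $\mathbb{E}[\xi_{X^k}(\pi^k\otimes_n\theta^k)\mid\mathcal{F}^k_{T^k_j}]=\mathbb{E}\bigl[\mathbb{E}[\xi_{X^k}(\pi^k\otimes_n\theta^k)\mid\mathcal{F}^k_{T^k_n}]\mid\mathcal{F}^k_{T^k_j}\bigr]\le\mathbb{E}[\,\esssup\ \cdot\ \mid\mathcal{F}^k_{T^k_j}]$, and then one takes the essential supremum over $\theta^k$ on the left.

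The only delicate step is the first one, specifically the identity $\xi_{X^k}(\pi^k\otimes_n\theta^k)=\xi_{X^k}(\pi^k\otimes_n\phi^k)\mathds{1}_G+\xi_{X^k}(\pi^k\otimes_n\varphi^k)\mathds{1}_{G^c}$; this requires that the controlled map $u^k\mapsto X^k(\cdot,u^k)$ behaves well under pasting controls along an $\mathcal{F}^k_{T^k_n}$-measurable set, which is the discrete counterpart of the Restriction/Concatenation/Finite-Mixing properties of $U^T_0$ recalled in Section \ref{basics}. Once that pasting property is noted, everything else is bookkeeping.
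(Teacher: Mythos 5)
Your proof is correct and follows essentially the same route as the paper's: the paper also sets $G=\{\mathbb{E}[\xi_{X^k}(\pi^k\otimes_n\phi^k)|\mathcal{F}^k_{T^k_n}]>\mathbb{E}[\xi_{X^k}(\pi^k\otimes_n\varphi^k)|\mathcal{F}^k_{T^k_n}]\}$, takes $\theta^k=\phi^k\mathds{1}_G+\varphi^k\mathds{1}_{G^c}$, and then invokes the standard exchange of $\esssup$ and conditional expectation for upward directed families (citing Prop.~1.1.4 of \cite{lamberton}), which is exactly the monotone-sequence argument you spell out. Your explicit remark that the pasting identity for $\xi_{X^k}$ relies on the pathwise nature of the structure $X^k$ is a point the paper leaves implicit, but it introduces no new ingredient.
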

\begin{proof}
For $\pi^k \in U^{k,n}_0$, let $G=\Big\{\mathbb{E}\Big[\xi_{X^k}(k,\pi^k\otimes_n\phi^k)|\mathcal{F}^k_{T^k_{n}}\Big] > \mathbb{E}\Big[\xi_{X^k}(\pi^k\otimes_n\varphi^k)|\mathcal{F}^k_{T^k_{n}}\Big] \Big\}$. Choose $\theta^k = \phi^k\mathds{1}_{G} + \varphi^k\mathds{1}_{G^c}$ and apply the finite mixing property to exchange the esssup into the conditional expectation (see e.g Prop 1.1.4 in \cite{lamberton}) to conclude the proof.
\end{proof}

\begin{proposition}\label{DPprop}
For each $u^k\in U^{k,e(k,T)}_0$, the discrete-time value process $V^k(\cdot, u^k)$ satisfies

\begin{equation} \label{DPE}
\begin{split}
&V^{k} (T^k_n , u^k) = \esssup_{\theta^{k}_n \in U^{k,n+1}_n}
\mathbb{E} \Bigg[ V^{k} \left(T^k_{n+1}, u^{k,n-1} \otimes_n \theta^k_n \right)    \mid \mathcal{F}^k_{T^k_{n}}\Bigg];~0\le n\le e(k,T)-1\\
&V^{k} (T^k_{e(k,T)} , u^k) = \xi_{X^k} (u^k)~a.s.
\end{split}
\end{equation}
On the other hand, if a
class of processes $\{Z^{k} (T^k_n, u^k); u^k \in U^{k,e(k,T)}_0; 0\le n\le e(k,T)\}$ satisfies the dynamic programming equation
(\ref{DPE}) for every $u^k\in U^{k,e(k,T)}_0$, then $Z^{k} (T^k_n, u^k)$ coincides with $V^{k}(T^k_n , u^k)~a.s$ for every $0\le n\le e(k,T)$ and for every $u^k \in U^{k,e(k,T)}_0$.
\end{proposition}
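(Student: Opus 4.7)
The plan is to establish the DPE at each step $n\in\{0,\ldots,e(k,T)-1\}$ by proving the two inequalities separately, using the restriction and concatenation properties of the control family $\{U^{k,\cdot}_\cdot\}$ (inherited from the corresponding properties of $U_{\cdot}^{\cdot}$), the tower property of conditional expectations, and the lattice property established in Lemma \ref{lattice2}. Uniqueness then follows by a routine backward induction.

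For the inequality ``$\le$'' I would fix $u^k\in U^{k,e(k,T)}_0$ and, given an arbitrary $\theta^k\in U^{k,e(k,T)}_n$, decompose it by restriction as $\theta^k=\theta^k_n\otimes_{n+1}\phi^k$, where $\theta^k_n\in U^{k,n+1}_n$ is the one-step piece at $n$ and $\phi^k\in U^{k,e(k,T)}_{n+1}$ is the tail. Applying the tower property together with the definition of $V^k$ at $T^k_{n+1}$ yields
\begin{equation*}
\mathbb{E}\bigl[\xi_{X^k}(u^k\otimes_n\theta^k)\mid\mathcal{F}^k_{T^k_n}\bigr]
=\mathbb{E}\bigl[\mathbb{E}[\xi_{X^k}(u^{k,n-1}\otimes_n\theta^k_n\otimes_{n+1}\phi^k)\mid\mathcal{F}^k_{T^k_{n+1}}]\mid\mathcal{F}^k_{T^k_n}\bigr]
\le\mathbb{E}\bigl[V^k(T^k_{n+1},u^{k,n-1}\otimes_n\theta^k_n)\mid\mathcal{F}^k_{T^k_n}\bigr].
\end{equation*}
Taking essential suprema over $\theta^k$ on the left — which, under the above decomposition, is bounded by the esssup over $\theta^k_n\in U^{k,n+1}_n$ of the right-hand side — delivers the ``$\le$'' inequality in (\ref{DPE}).

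For the inequality ``$\ge$'', I fix $\theta^k_n\in U^{k,n+1}_n$ and, for every $\phi^k\in U^{k,e(k,T)}_{n+1}$, use concatenation to form $\theta^k_n\otimes_{n+1}\phi^k\in U^{k,e(k,T)}_n$. By the definition of $V^k(T^k_n,u^k)$ as an esssup and the tower property,
\begin{equation*}
V^k(T^k_n,u^k)\ge
\mathbb{E}\bigl[\mathbb{E}[\xi_{X^k}(u^{k,n-1}\otimes_n\theta^k_n\otimes_{n+1}\phi^k)\mid\mathcal{F}^k_{T^k_{n+1}}]\mid\mathcal{F}^k_{T^k_n}\bigr].
\end{equation*}
The lattice property (Lemma \ref{lattice2}) applied to the family indexed by $\phi^k\in U^{k,e(k,T)}_{n+1}$ provides a maximising sequence whose inner conditional expectation converges a.s.\ upward to $V^k(T^k_{n+1},u^{k,n-1}\otimes_n\theta^k_n)$; monotone convergence for conditional expectations then transports the esssup inside, and taking esssup over $\theta^k_n\in U^{k,n+1}_n$ yields the matching ``$\ge$'' bound.

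Uniqueness is then proved by backward induction on $n$: at $n=e(k,T)$ both $Z^k$ and $V^k$ equal $\xi_{X^k}(u^k)$ a.s., and the shared DPE recursion, applied under the induction hypothesis $Z^k(T^k_{n+1},u^k)=V^k(T^k_{n+1},u^k)$ a.s.\ for every $u^k$, forces $Z^k(T^k_n,u^k)=V^k(T^k_n,u^k)$ a.s. The one genuinely delicate step is the interchange of esssup with the outer conditional expectation in the ``$\ge$'' direction; this is precisely where the lattice property (and, implicitly, the finite-mixing property of $U^{k,e(k,T)}_{n+1}$) does the essential work.
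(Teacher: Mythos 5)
Your proposal is correct and follows essentially the same route as the paper: the paper compresses your two inequalities into the single identity $\esssup_{\phi^k\in U^{k,e(k,T)}_n}=\esssup_{\theta^k_n\in U^{k,n+1}_n}\esssup_{\phi^k\in U^{k,e(k,T)}_{n+1}}$ (your restriction/concatenation decomposition) and then invokes Lemma \ref{lattice2} for the interchange of $\esssup$ with the conditional expectation, which is precisely the maximising-sequence and monotone-convergence step you spell out in the ``$\ge$'' direction. The backward induction for uniqueness is likewise the intended (omitted) argument.
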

\begin{proof}
Fix $u^k\in U^{k,e(k,T)}_0$. By using Lemma~\ref{lattice2} and the identity

$$\esssup_{\phi^k\in U^{k,e(k,T)}_n}\mathbb{E}\Big[\xi_{X^k}(u^k\otimes_n\phi^k)|\mathcal{F}^k_{T^k_{n}}\Big] = \esssup_{\theta^k_n\in U^{k,n+1}_n}\esssup_{\phi^k\in U^{k,e(k,T)}_{n+1}}\mathbb{E}\Big[\xi_{X^k}(u^k\otimes_n(\theta^k_n\otimes_{n+1}\phi^k))|\mathcal{F}^k_{T^k_{n}}\Big]$$
a.s for each $0\le n\le e(k,T)-1$, the proof is straightforward, so we omit the details.
\end{proof}

\section{Main Results}\label{mainrsection}
In this section, we describe the main results of this paper. First, we present the dynamic programming algorithm and then, we provide two convergence results under structural conditions on controlled imbedded discrete structures
$\mathcal{X} = \big((X^k)_{k\ge 1},\mathscr{D}\big)$ associated with controlled Wiener functionals.


\subsection{The Dynamic Programming Algorithm}\label{DPPsubsection}

In this section, we describe the dynamic programming algorithm for the discrete version

\begin{equation}\label{discreteOCPROBLEM}
V^k(0) = \sup_{\phi^k\in U^{k,e(k,T)}_0}\mathbb{E}\big[\xi_{X^k}(\phi^k)\big]
\end{equation}
of the nonlinear expectation $\sup_{\phi\in U^T_0}\mathbb{E}\big[\xi_X(\phi)\big]$ and for the corresponding optimal control based on the skeleton $\mathscr{D}$. In order to present the algorithm, it is important to describe the pathwise dynamics generated by $\mathscr{D}$. Let us define

$$\mathbb{I}_k^o:=\Big\{ (i^k_1, \ldots, i^k_d); i^k_\ell\in \{-1,0,1\}~\forall \ell \in \{1,\ldots, d\}~\text{and}~\sum_{j=1}^d|i^k_j|=1   \Big\}$$
and $$\mathbb{I}_k := \Big\{ \epsilon_k \left(i^k_1 \mathds{1}_{\{\mid i^k_1 \mid =1\}} + z^k_1 \mathds{1}_{\{\mid i^k_1 \mid \neq 1\}} , \ldots , i^k_d \mathds{1}_{\{\mid i^k_d \mid =1\}} +z^k_d \mathds{1}_{\{\mid i^k_d \mid \neq 1\}} \right) ; (i_1^k , \ldots , i^k_d) \in \mathbb{I}_k^o,$$
$$(z^k_1, \ldots , z^k_d) \in (-1,1)^d \Big\}. $$

For obvious reasons, $\mathbb{W}_k:=(0,+\infty)\times \mathbb{I}_k$ will be called as the \textit{noise space}. The $n$-fold Cartesian product of $\mathbb{W}_k$ is denoted by $\mathbb{W}_k^n$ and a generic element of $\mathbb{W}^n_k$ will be denoted by

$$
(w^k_1, \ldots, w^k_n)\in \mathbb{W}^n_k,
$$
where $w^k_r = (s^k_r,\tilde{i}^k_r)\in \mathbb{W}_k$ for $1\le r\le n$. Let us define

$$\Delta A^k(T^k_n):=(\Delta A^{k,1}(T^k_n), \ldots, \Delta A^{k,d}(T^k_n)),$$
where

$$\Delta A^{k,j}(T^k_n) = B^j (T^k_n) - B^j (T^k_{n-1}),$$
for $1\le j\le d; n,k\ge 1$.
Let us define

$$
\mathcal{A}^k_n:= \Big(\Delta T^k_1, \Delta A^{k}(T^k_1), \ldots, \Delta T^k_n, \Delta A^{k}(T^k_n)\Big)\in \mathbb{W}^n_k~a.s.
$$
One should notice that $$\widetilde{\mathcal{F}}^k_{T^k_n} = (\mathcal{A}^k_n)^{-1}(\mathcal{B}(\mathbb{W}^n_k)),$$
where $\mathcal{B}(\mathbb{W}^n_k)$ is the Borel $\sigma$-algebra generated by $\mathbb{W}^n_k; n\ge 1$.


We set $\mathbb{H}_k:= \mathbb{W}_k \times \mathbb{R}^n$, where $\mathbb{R}^n$ is the state-space. We denote $\mathbb{H}^{i}_k$ as the $i$-fold Cartesian product of $\mathbb{H}_k$. In the sequel, we fix an initial condition $x_0\in \mathbb{R}^n$. The elements of $\mathbb{H}^{j}_k$ will be denoted by

$$
\textbf{o}^{k}_i := \Big( (w^k_1,x^k_1), \ldots, (w^k_i, x^k_i) \Big).
$$
For convenience, we set $\mathbf{o}^k_0:=(0,0,x_0)$ and $\mathbb{H}_k^0:= \{(0,0,x_0)\}$. We need to introduce the following projections: For $\textbf{o}^{k}_i = \big( (w^k_1,x^k_1), \ldots, (w^k_i, x^k_i) \big)$, we set

$$\pi_2(\textbf{o}^{k}_i) := \big( (s^k_1,x^k_1), \ldots, (s^k_{i}, x^k_i) \big)\quad \pi_3(\textbf{o}^{k}_i) := \big( w^k_1, \ldots, w^k_i\big),$$
for $i\ge 1$.

Let $\gamma_{j}:\mathbb{H}^{j}_k\rightarrow \mathbf{D}_{n,T}$ be the map

\begin{equation}\label{gammafunc}
\gamma_{j} (\textbf{o}^{k}_{j}) = x_0 + \sum_{i=1}^{j} x^k_i 1\!\!1_{ \{t^k_i \leq \cdot \} },
\end{equation}
where $t^k_i:= \sum_{\ell=1}^i s^k_\ell$. For notational convenience, we set $\gamma_0(\mathbf{o}^k_0):=x_0$.

\begin{remark}\label{gammarem}
We observe $\gamma_j(\mathbf{o}^k_j); j=1, \ldots, e(k,T)$ in (\ref{gammafunc}) does not depend on the variables $(\tilde{i}^k_1,\ldots, \tilde{i}^k_j)$ and it is just a function of $(s^k_1, x^k_1, \ldots, s^k_j,x^k_j)$.
\end{remark}



In the sequel, we describe the dynamics of a controlled imbedded discrete structure $\mathcal{X}  = \big( (X^k)_{k\ge 1}, \mathscr{D} \big)$ as a function of the state-space and the noise as follows: Fix a set of Borel functions

$$\Delta x_j : \big(\pi_2\big(\mathbb{H}_k\big)\big)^{j-1} \times \mathbb{A}\times (0,\infty)\times \mathbb{R}^d \rightarrow \mathbb{R}^n,\quad \ell_j : \mathbb{W}^j_k\rightarrow \mathbb{R}^d,$$
for $j=1,\ldots, e(k,T)$, where we set $\big(\pi_2\big(\mathbb{H}_k\big)\big)^{0}:=\{0\}\times \mathbb{R}^n$.

For a given admissible control $u^k = (u^k_0, \ldots, u^k_{e(k,T)-1})\in U^{k,e(k,T)}_0$, we set $\Xi^{k,u^k}_0 := (0,0,x_0)$ and we assume a controlled imbedded discrete structure $\mathcal{X}  = \big( (X^k)_{k\ge 1}, \mathscr{D} \big)$ satisfies the following dynamics:

\

\noindent \textbf{Assumption (E1):}

\begin{equation}\label{feedH}
\Delta X^{k,u^k}(T^k_j) = \Delta x_j \Big(\pi_2(\Xi^{k,u^k}_{j-1}),u^k_{j-1} , \Delta T^k_j , \ell_j(\mathcal{A}^k_j) \Big),
\end{equation}
where

\begin{equation}\label{Xioperator}
\Xi^{k,u^k}_j :=\Big((\Delta T^k_1,\Delta A^k(T^k_1), \Delta X^{k, u^k}(T^k_1 ) ), \ldots, (\Delta T^k_j, \Delta A^k(T^k_j), \Delta X^{k, u^k}(T^k_j) ) \Big),
\end{equation}
for $1\le j\le e(k,T)$.

\

By construction,

\begin{equation}\label{compoID}
\gamma_{j}\big(\pi_2(\Xi^{k,u^k}_{j})\big)_\cdot = X^{k,u^k}(\cdot\wedge T^k_j)=x_0 +  \sum_{j=1}^{j} \Delta X^{k,u^k}(T^k_j) 1\!\!1_{ \{ T^k_j \leq \cdot \} }, \quad j=1, \ldots , e(k,T).
\end{equation}

Next, we illustrate Assumption (E1) with three fundamental examples.

\begin{example}

Let $X^u$ be the $n$-dimensional controlled SDE given by (\ref{pdsdeBM}). In view of (\ref{Xeulerk}), the functions $\Delta x_j$ in the Assumption (E1) are given as follows.

For each $\textbf{o}^{k}_j = \big( (s^k_1,\tilde{i}^k_1,x^k_1), \ldots, (s^k_{j}, \tilde{i}^k_j, x^k_j) \big)$,



\begin{eqnarray}\label{deltaBM}
 \Delta x_j \big(\pi_2(\textbf{o}^{k}_{j-1}),a^k_{j-1} , s^k_j,\ell_j(\pi_3(\mathbf{o}^k_j))\big) &=& \alpha \big(t^k_{j-1} ,\gamma_{j-1} (\pi_2(\textbf{o}^{k}_{j-1})),a^k_{j-1}\big) s^k_j\\
\nonumber & +& \sigma \big(t^k_{j-1} ,\gamma_{j-1} (\pi_2(\textbf{o}^{k}_{j-1})),a^k_{j-1}\big) \tilde{i}^k_j,
\end{eqnarray}
where $t^k_{j-1}= s^k_1 + \cdots + s^k_{j-1}$, for $1\le j\le e(k,T)$. In this case,

\begin{equation}\label{ellBM}
\ell_j(\pi_3(\mathbf{o}^k_j)) = \tilde{i}^k_j; 1\le j \le e(k,T).
\end{equation}
We observe $\ell_j(\pi_3(\mathbf{o}^k_j))$ is not path-dependent and the non-Markovian memory comes from the non-anticipative functionals $(\alpha,\sigma)$.

\end{example}

In the next examples, the lack of Markov property comes from an SDE non-anticipative functional and a fractional Brownian motion $B_H$. The process $B_H$ is a path-dependent singular transformation of the Brownian motion which incorporates more dependency on the dynamics of a controlled imbedded discrete structure $\mathcal{X}  = \big( (X^k)_{k\ge 1}, \mathscr{D} \big)$. Here, we revisit the two examples treated in sections \ref{FBMsub} and \ref{rvsub} but in the light of Assumption (E1).

For each information set $ \big(s^k_1,\tilde{i}^k_1, s^k_2,\tilde{i}^k_2, \ldots,   \big) \in \mathbb{W}^\infty_k$ and $s\ge 0$, we define

$$
\bar{s}^{+}_k:= \min\{t^{k}_n ; \bar{s}_k < t^{k}_n\}\wedge T~\text{and}~\bar{s}_k:=\max\{t^{k}_n; t^{k}_n\le s\},
$$
$$a^k(s):= \sum_{\ell=1}^\infty \tilde{i}^k_\ell \mathds{1}_{\{t^k_\ell \le s\}},$$
where we recall $t^k_{j-1}= s^k_1 + \cdots + s^k_{j-1}$. For each $\textbf{o}^{k}_j = \big( (s^k_1,\tilde{i}^k_1,x^k_1), \ldots, (s^k_{j}, \tilde{i}^k_j, x^k_j) \big)$, we define

\begin{equation}\label{ahdet}
\mathbb{A}^{k}_H(\pi_3(\mathbf{o}^k_j)):= \int_0^{t^k_j} \partial_s K_{K,1}(t^k_j,s)\Big[ a^k(t^k_j) - a^k(\bar{s}^+_k) \Big]ds - \int_0^{t^k_j}\partial_s K_{H,2}(t^k_j,s)a^k(s)ds,
\end{equation}
where $K_{H,1}$ and $K_{H,2}$ are given in section \ref{FBMsub}.

\begin{example}
Let $X^u$ be the one-dimensional controlled SDE given by (\ref{limsdefbm}).By construction, $\mathbb{A}^k_H(\mathcal{A}^k_j)= B^k_H(T^k_j); j\ge0$. In view of (\ref{fulleFBM}), the functions $\Delta x_j$ in the Assumption (E1) are given by: For each path $\mathbf{o}^k_j= \big( (s^k_1,\tilde{i}^k_1,x^k_1), \ldots, (s^k_{j}, \tilde{i}^k_j, x^k_j) \big)$,

\begin{equation}\label{deltaFBM}
 \Delta x_j (\pi_2(\textbf{o}^{k}_{j-1}),a^k_{j-1} , s^k_j, \ell_j(\pi_3(\mathbf{o}^k_j))) := \alpha \big(t^k_{j-1} ,\gamma_{j-1} (\pi_2(\textbf{o}^{k}_{j-1})),a^k_{j-1}\big) s^k_j + \sigma \ell_j(\pi_3(\mathbf{o}^k_j)) ,
\end{equation}
where
\begin{equation}\label{ellFBM}
\ell_j(\pi_3(\mathbf{o}^k_j)) = \big[\mathbb{A}^k_H(\pi_3(\mathbf{o}^k_j)) - \mathbb{A}^k_H(\pi_3(\mathbf{o}^k_{j-1}))\big];~1\le j\le e(k,T),
\end{equation}
and $\mathbb{A}^k_H(0,0,x_0):=0$. In contrast to (\ref{ellBM}), we observe $\ell_j(\pi_3(\mathbf{o}^k_j))$ in (\ref{ellFBM}) carries the whole path $(s^k_1,\tilde{i}^k_1, \ldots, s^k_j,\tilde{i}^k_j )$.
\end{example}

\begin{example}
Let $X^u$ be the controlled SDE given by (\ref{dade}). In view of (\ref{eufulrv}), the functions $\Delta x_j$ in the Assumption (E1) are given by: For each $\textbf{o}^{k}_j = \big( (s^k_1,\tilde{i}^k_1,x^k_1), \ldots, (s^k_{j}, \tilde{i}^k_j, x^k_j) \big)$,

\begin{eqnarray}
\nonumber \Delta x_j(\pi_2(\textbf{o}^{k}_{j-1}),a^k_{j-1} , s^k_j,\ell_j(\pi_3(\mathbf{o}^k_j))) &=&\gamma_{j-1} (\pi_2(\textbf{o}^{k}_{j-1}))(t^k_{j-1})\mu(a^k_{j-1}) s^k_j\\
\label{deltaRV} &+& \gamma_{j-1} (\pi_2(\textbf{o}^{k}_{j-1}))(t^k_{j-1})\vartheta(\mathbb{Z}^k(\pi_3(\mathbf{o}^k_j)),a^k_{j-1})  \tilde{i}^{k,1}_j,
\end{eqnarray}
where $t^k_{j-1}= s^k_1 + \cdots + s^k_{j-1}$. We set

$$\mathbf{W}^k_H(\pi_3(\mathbf{o}^k_j)):= \rho \mathbb{A}^{k,1}_H(\pi_3(\mathbf{o}^k_j)) + \bar{\rho} \mathbb{A}^{k,2}_H(\pi_3(\mathbf{o}^k_j))$$
and $\mathbf{W}^k_H(\pi_3(\mathbf{o}^k_j),u):= \mathbf{W}^k_H(\pi_3(\mathbf{o}^k_j))$ for $t^k_{j-1}\le u< t^k_j$. The function $\ell_j$ is given by

$$
\ell_j(\pi_3(\mathbf{o}^k_j)) = \big( \mathbb{Z}^k(\pi_3(\mathbf{o}^k_j)), \tilde{i}^{k,1}_j \big),
$$
where

\begin{equation}\label{zdet}
\mathbb{Z}^k(\pi_3(\mathbf{o}^k_j)):=e^{-\beta t^k_j} z_0 + \nu \mathbf{W}^k_H(\pi_3(\mathbf{o}^k_j))  -\beta \nu e^{-\beta t^k_j}\int_0^{t^k_j}\mathbf{W}^k_H(\pi_3(\mathbf{o}^k_j),u)e^{\beta \bar{u}_k}du,
\end{equation}
for $1\le j\le e(k,T)$.
\end{example}

The dynamic programming associated with $V^k(0)$ will be fully based on Proposition \ref{DPprop} but it involves delicate measurability issues. The first step is to \textit{aggregate} the map $u^k\mapsto V^k(\cdot,u^k)$ into a list of upper semi-analytic functions $\mathbb{V}^k_j:\mathbb{H}^{j}_k \rightarrow \overline{\mathbb{R}}$; $j=0, \ldots, e(k,T)$. We do this by using classical theory of analytic sets (see e.g \cite{bertsekas}). The technical proofs of all unexplained statements are given in the Appendix \ref{MSTappendix}.

In the sequel, (\ref{feedH}) and (\ref{Xioperator}) in Assumption (E1) are in force. Let us define

$$
\nu^k:=\text{Law of}~(\Delta T^k_1,\Delta A^k(T^k_1)). k\ge 1.
$$
We start with $\mathbb{V}^{k}_{e(k,T)}(\mathbf{o}^{k}_{e(k,T)}):=\xi(\gamma_{e(k,T)}(\mathbf{o}^{k}_{e(k,T)}))$ (see Remark \ref{gammarem}) and proceed backwards $j=e(k,T)-1, \ldots, 0$,

\begin{eqnarray}\label{DPP}
\mathbf{U}^k_j(\mathbf{o}^k_j,\theta) &:=& \int_{\mathbb{W}_k}\mathbb{V}^{k}_{j+1}\Big(\mathbf{o}^{k}_{j}, \mathfrak{X}^k_{j+1}(\theta, \mathbf{o}^k_{j}, w^k)\Big)\nu^k(dw^k)\\
\nonumber\mathbb{V}^k_j(\mathbf{o}^k_j) &:=& \sup_{\theta\in \mathbb{A}}\mathbf{U}^k_j(\mathbf{o}^k_j,\theta).
\end{eqnarray}
Here, the maps $\mathfrak{X}^k_{j+1}: \mathbb{A}\times \mathbb{H}_k^j\times \mathbb{W}_k\rightarrow \mathbb{W}_k\times \mathbb{R}^n$

$$
\mathfrak{X}^k_{j+1}(\theta, \mathbf{o}^k_{j}, w^k):=\Big(w^k, \Delta x_{j+1}\big(\pi_2(\textbf{o}^{k}_j),\theta, s^k, \ell_{j+1}(\pi_3(\mathbf{o}^k_j),w^k)\big)\Big)
$$
will be called the \textit{transition kernels} associated with $\mathcal{X}$. The function
\begin{eqnarray*}\label{osavf}
\mathbf{o}^k_j \mapsto \mathbf{U}^k_j(\mathbf{o}^k_j,\theta) &=& \mathbb{E}\Big[ \mathbb{V}^{k}_{j+1}\Big(\mathbf{o}^{k}_{j}, \mathfrak{X}^k_{j+1}(\theta, \mathbf{o}^k_{j}, \Delta T^k_{j+1},\Delta A^k(T^k_{j+1}) )\Big) \Big]
\end{eqnarray*}
is called the \textit{optimal state-action value function} at step $j$ and $\mathbb{V}^k$ is the value functional. One can show (see Propositions \ref{detarg} and \ref{AGREGATION}),

$$
\mathbb{V}^{k}_{j}(\Xi^{k,u^k}_{j}) = V^k(T^k_{j}, u^k) ~a.s; \quad j=e(k,T),\ldots,0,
$$
for each control $u^k\in U^{k,e(k,T)}_0$. Moreover, for every $\epsilon>0$, there exists a universally  measurable function $C^\epsilon_{k,j}:\mathbb{H}^{j}_k\rightarrow\mathbb{A}$ such that

\begin{equation}\label{opcontrol}
\mathbb{V}^{k}_j(\mathbf{o}^k_{j})\le \int_{\mathbb{W}_k}\mathbb{V}^{k}_{j+1}\Big(\mathbf{o}^{k}_{j}, \mathfrak{X}^k_{j+1}(C^{\epsilon}_{k,j}(\mathbf{o}^k_j), \mathbf{o}^k_{j}, w^k)\Big)\nu^k(dw^k) + \epsilon,
\end{equation}
for every $\mathbf{o}^k_{j}\in \{\mathbb{V}^{k}_j < + \infty\}$, where $j=e(k,T)-1,\ldots, 0$. In particular, if $\mathbb{H}^{j}_k = \{\mathbb{V}^k_j < +\infty\}$, for $j=e(k,T)-1, \ldots, 0$, then for every $\epsilon>0$ and $u^k\in U^{k,e(k,T)}_0$, the control $u^{k,\epsilon}_j$ defined by

\begin{equation}\label{explicitcontrolTEXT}
u^{k,\epsilon}_j = C^\epsilon_{k,j}(\Xi^{k,u^k}_j); j=e(k,T)-1, \ldots, 0
\end{equation}
realizes
$$
V^k(T^k_j,u^k)\le \mathbb{E}\big[V^k(T^k_{j+1}, u^k\otimes_{j}u^{k,\epsilon}_j)|\mathcal{F}^k_{T^k_j}  \big] + \epsilon~a.s,
$$
for every $j=e(k,T)-1, \ldots, 0$. The above construction suggests the following definition.

\begin{definition}\label{admissiblepairdiscrete}
A pair $(\xi,\mathcal{X})$ is \textbf{admissible} w.r.t the control problem (\ref{discreteOCPROBLEM}) if $\mathbb{H}^{j}_k = \{\mathbb{V}^k_j < +\infty\}$ for every $j=e(k,T)-1, \ldots, 0$ and $k\ge 1$.
\end{definition}

From dynamic programming principle (see Proposition \ref{epsiloncTH}), for any admissible pair $(\xi,\mathcal{X})$  w.r.t the control problem (\ref{discreteOCPROBLEM}), the control

\begin{equation}\label{fullexplicitcontrolTEXT1}
\phi^{*,k} := (\phi^{k,\eta_k(\epsilon)}_0, \phi^{k,\eta_k(\epsilon)}_1, \ldots, \phi^{k,\eta_k(\epsilon)}_{e(k,T)-1})
\end{equation}
constructed via (\ref{explicitcontrolTEXT}) (with $\eta_k(\epsilon)=\frac{\epsilon}{e(k,T)}$) realizes

$$
\sup_{u^k\in U^{k,e(k,T)}_0}\mathbb{E}\big[\xi_{X^k}(u^k)\big]\le \mathbb{E}\big[\xi_{X^k}(\phi^{*,k})\big]+ \epsilon.
$$

Next, we discuss the impact of the type of memory has in the obtention of (\ref{fullexplicitcontrolTEXT1}). When the lack of Markov property is intrinsic to the system, then the value functional is only a function of the time and the controlled state. This is not the case if the memory is generated by an additional non-Markovian extrinsic noise. This is summarized by the following result.

\begin{proposition}\label{openclosed}
Let $X$ be the controlled path-dependent SDE (\ref{pdsdeBM}) and let $\mathcal{X}$ be the associated Euler scheme given by (\ref{xksde}) restricted to $U^{k,e(k,T)}_0$. Then, the class of admissible controls arising from the dynamic programming equation (\ref{DPP}) based on $\mathcal{X}$ is constituted by feedback controls, i.e., they are adapted w.r.t the filtration generated by $X^{k,\phi^{k}}$ for each $\phi^k \in U^{k,e(k,T)}_0$. Let $X$ be the controlled SDEs (\ref{limsdefbm}) and (\ref{priceprocess}) with their Euler schemes $\mathcal{X}$ restricted to $U^{k,e(k,T)}_0$ given by (\ref{xksdeFBM}) and (\ref{xksdeRV}), respectively. Then, the class of admissible controls arising from (\ref{DPP}) based on $\mathcal{X}$ is constituted by open-loop controls.
\end{proposition}
\begin{proof}
To shorten notation, we set $m=e(k,T)$. In the case of the controlled SDE (\ref{pdsdeBM}), the function (see (\ref{ellBM})) $\ell_j(\pi_3(\mathbf{o}^k_j)) = \tilde{i}^k_j$ does not depend on the past $\big( (s^k_1,\tilde{i}^k_1),\ldots, (s^k_{j-1},\tilde{i}^k_{j-1}) \big)$ of the noise $\pi_3(\mathbf{o}^k_{j})$. Therefore, by the very definition of (\ref{deltaBM}), the domain of the value functional $\mathbb{V}^k_j; j=m,\ldots, 1$ can be reduced to the $j$-fold Cartesian product of $(0,\infty)\times \mathbb{R}^n$ and $\mathbb{V}^k_j(\mathbf{o}^k_j)$ becomes only a function of $\pi_2(\mathbf{o}^k_j) = \big(s^k_1,x^k_1,\ldots, s^k_j,x^k_j\big)$ which does not contain the variable $\tilde{i}^k_1, \ldots, \tilde{i}^k_{j}$. In this case, the complexity of the dynamic programming algorithm can be drastically reduced to

\begin{eqnarray}\label{feedDP}
\mathbf{U}^k_j(\pi_2(\mathbf{o}^k_j),\theta) &=& \int_{\mathbb{W}_k}\mathbb{V}^{k}_{j+1}\Big(\pi_2(\mathbf{o}^{k}_{j}), \mathfrak{X}^k_{j+1}(\theta,\pi_2( \mathbf{o}^k_{j}), w^k)\Big)\nu^k(dw^k)\\
\nonumber\mathbb{V}^k_j(\pi_2(\mathbf{o}^k_j)) &=& \sup_{\theta\in \mathbb{A}}\mathbf{U}^k_j(\pi_2(\mathbf{o}^k_j),\theta).
\end{eqnarray}
Here,

\begin{equation}\label{jumpX}
\mathfrak{X}^k_{j+1}(\theta, \pi_2(\mathbf{o}^k_{j}), w^k)=\Big(s^k,\Delta x_{j+1}(\pi_2(\textbf{o}^{k}_j),\theta, w^k))\Big)
\end{equation}

\[
 \Delta x_{j+1} \big(\pi_2(\textbf{o}^{k}_{j}),\theta , s^k,\tilde{i}^k\big) = \alpha \big(t^k_{j} ,\gamma_{j} (\pi_2(\textbf{o}^{k}_{j})),\theta \big) s^k + \sigma \big(t^k_{j} ,\gamma_{j} (\pi_2(\textbf{o}^{k}_{j})),\theta\big) \tilde{i}^k,
\]
for $w^k = (s^k,\tilde{i}^k)$, $\theta \in \mathbb{A}$, $\mathbf{o}^k_j \in \mathbb{H}_k^j$, where $j=m-1,\ldots, 0$. Hence, the admissible controls in $U^{k,m}_0$ arising from (\ref{DPP}) are actually feedback, i.e., they are adapted w.r.t the filtration generated by $X^{k,\phi^{k}}$ for each $\phi^k \in U^{k,m}_0$. For the controlled SDEs (\ref{limsdefbm}) and (\ref{priceprocess}), we have, respectively,

$$\ell_j(\pi_3(\mathbf{o}^k_j)) = \big[\mathbb{A}^k_H(\pi_3(\mathbf{o}^k_j)) - \mathbb{A}^k_H(\pi_3(\mathbf{o}^k_{j-1}))\big]
$$
and

$$\ell_j(\pi_3(\mathbf{o}^k_j)) = \big( \mathbb{Z}^k(\pi_3(\mathbf{o}^k_j)), \tilde{i}^{k,1}_j \big),$$
for $1\le j\le m$, where $\mathbb{A}^k_H$ and $\mathbb{Z}^k$ follow (\ref{ahdet}) and (\ref{zdet}), respectively. In those cases, at the first step $m=e(k,T)$ (in the backward procedure (\ref{DPP})), the value function $\mathbb{V}^k_{m-1}(\mathbf{o}^k_{m-1})$ is a function of $(s^k_1,\tilde{i}^k_1,x^k_1,\ldots, s^k_{m-1},\tilde{i}^k_{m-1},x^k_{m-1})$. Therefore, $\mathbb{V}^k_{j-1}$ becomes fully dependent on $(s^k_1,\tilde{i}^k_1,\ldots, s^k_{j-1},\tilde{i}^k_{j-1})$ for $j=1,\ldots, m$. Therefore, the admissible controls arising from (\ref{DPP}) are open-loop.
\end{proof}

\begin{remark}
In case $(\xi, \alpha,\sigma)$ are state-dependent (not path-dependent) functionals in the controlled SDE (\ref{pdsdeBM}), then the value functionals reduce to $\mathbb{V}^k_j(t^k_j, x^k_j)$, where $x^k_j$ represents the controlled state-variable at the $j$-th step for $j=e(k,T)-1, \ldots, 0$.
\end{remark}

\begin{remark}\label{leaorem} If the map $\mathbb{V}^{k}_{e(k,T)}:\mathbb{H}^{e(k,T)}_k\rightarrow\mathbb{R}$ is upper semicontinuous, then we can apply the Borel  measurable selection theorem (see, Prop 7.33 in \cite{bertsekas}) to conclude that the function $\mathbb{V}^{k}_j:\mathbb{H}^{j}_k\rightarrow\overline{\mathbb{R}}$ is also upper semicontinuous for each $j=e(k,T)-1,\ldots, 1$. Moreover, there exists a Borel measurable function $C^\star_{k,j}:\mathbb{H}^{j}_k\rightarrow\mathbb{A}$ such that

$$
\mathbb{V}^{k}_j(\mathbf{o}^k_{j}) = \int_{\mathbb{W}_k}\mathbb{V}^{k}_{j+1}\Big(\mathbf{o}^{k}_{j}, \mathfrak{X}^k_{j+1}(C^{\star}_{k,j}(\mathbf{o}^k_j), \mathbf{o}^k_{j}, w^k)\Big)\nu^k(dw^k)
$$
for every $\mathbf{o}^k_{j}\in \{\mathbb{V}^{k}_j < + \infty\}$, where $j=e(k,T)-1,\ldots, 1$. If $\mathbb{H}^{j}_k = \{\mathbb{V}^k_j < +\infty\}; j=e(k,T)-1, \ldots, 0$ then, for a given $u^k\in U^{k,e(k,T)}_0$, the list of controls $u^{\star,k}_j$ defined by

\begin{equation}\label{additionalH}
u^{\star,k}_j = C^\star_{k,j}(\Xi^{k,u^k}_j); j=e(k,T)-1, \ldots, 0,
\end{equation}
realizes

$$
V^k(T^k_j,u^k) = \mathbb{E}\big[V^k(T^k_{j+1}, u^k\otimes_{j}u^{\star,k}_j)|\mathcal{F}^k_{T^k_j}  \big] ~a.s,
$$
for every $j=e(k,T)-1, \ldots, 0$. In particular, if $\mathbb{V}^k_{e(k,T)}$ is continuous, then $C^{\star}_{k,j}:\mathbb{H}^{j}_k\rightarrow \mathbb{A}$ given by

\begin{equation}\label{explicitCk}
C^{\star}_{k,j}(\mathbf{o}^k_j) = \max \Big\{  a\in \mathbb{A}; \mathbb{V}^k_j(\mathbf{o}^k_j) = \int_{\mathbb{W}_k}\mathbb{V}^{k}_{j+1}\Big(\mathbf{o}^{k}_{j}, \mathfrak{X}^k_{j+1}(C^{\epsilon}_{k,j}(\mathbf{o}^k_j), \mathbf{o}^k_{j}, w^k)\Big)\nu^k(dw^k) \Big\}
\end{equation}
is a Borel measurable function, where the maximum in the above right-hand side is taken w.r.t the lexicographical order. See e.g \cite{dolinsky}.
Under these conditions, the control

$$
u^{\star,k}= \big(u^{\star,k}_0, \ldots, u^{\star,k}_{m-1}   \big)
$$
constructed via (\ref{additionalH}) and (\ref{explicitCk}) realizes

$$V^k(0)=\sup_{u^k\in U^{k,e(k,T)}_0}\mathbb{E}\big[\xi_{X^k}(u^k)\big]= \mathbb{E}\big[\xi_{X^k}(u^{\star,k})\big].$$

\end{remark}

\subsection{Convergence of optimal values}\label{CONVERGENCEsection}
Throughout this section, we assume that $(X,\mathcal{X})$ is a strongly controlled Wiener functional. In this section, we aim to prove the controlled imbedded discrete structure $\mathcal{V} = \big((V^k)_{k\ge 1},\mathscr{D}\big)$ given by (\ref{discretevalueprocess}) yields a solution of the control problem (\ref{optimaldef1}) in Definition \ref{epsilonDEF} for $k\ge 1$ large enough.

\begin{theorem}\label{absrate1}
Let $(X,\mathcal{X})$ be a strongly controlled Wiener functional satisfying Assumption (B1) and let $(h_k)_{k\ge 1}$ be the rate associated with (\ref{keyassepsilon}). We assume $(\xi,\mathcal{X})$ is admissible, where the payoff $\xi$ satisfies Assumption (A1), with $\gamma$-H\"{o}lder regularity. Let $\big((V^k)_{k\ge 1},\mathscr{D}\big)$ be the value process (\ref{discretevalueprocess}) associated with $\mathcal{X}=\big((X^k)_{k\ge 1},\mathscr{D}\big)$, where $\Delta X^{k}$ satisfies Assumption (E1). Let $\mathfrak{X}^k_{j+1}: \mathbb{A}\times \mathbb{H}^{j}_{k}\times \mathbb{W}_k\rightarrow \mathbb{R}^n; j=e(k,T)-1, \ldots, 0$ be the transition kernel associated with $\mathcal{X}$. Then, for each $n= e(k,T)-1,\ldots, 0$, we have

\begin{equation} \label{DPEexp}
\begin{split}
&V^{k} (T^k_n , u^k) = \sup_{\theta\in \mathbb{A}}
\int_{\mathbb{W}_k} \mathbb{V}^{k}_{n+1}\big(\Xi^{k,u^k}_n, \mathfrak{X}^k_{n+1}(\theta, \Xi^{k,u^k}_n, w^k)\big) \nu^k(dw^k)\\
&V^{k} (T^k_{e(k,T)} , u^k) = \xi_{X^k} (u^k)~a.s,
\end{split}
\end{equation}
for each control $u^k\in U^{k,e(k,T)}_0$. Moreover, for a given $\epsilon >0$ and $\beta \in (0,1)$, there exists a constant $C$ which depends on $\beta, \|\xi\|_\gamma$ and Assumption (B1) and there exists a constant $L_\epsilon$ which depends on $\epsilon>0$ such that

\begin{equation}\label{quasirate}
\Big|\sup_{\phi\in U^T_0}\mathbb{E}\big[\xi_X(\phi)\big] - V_k(0)\Big|\le C \big\{ h^\gamma_k + |L_\epsilon|^\gamma\epsilon_k^{\gamma \beta}\big\} + \epsilon,
\end{equation}
for every $k\ge 1$ sufficiently large. For a given $\epsilon>0$ and $k\ge 1$, let $\phi^{*,k,\epsilon} \in U^{k,e(k,T)}_0$ be a near optimal control associated with the control problem

$$
\mathbb{E}\big[ \xi_{X^k}\big(\phi^{*,k,\epsilon}\big)\big] > V_k(0) -\frac{\epsilon}{3};~k\ge 1.
$$
Then, $\phi^{*,k,\epsilon}\in U_0^T$ is a near optimal control for the Brownian motion driving stochastic control problem, i.e.,

\begin{equation}\label{nearOC}
\mathbb{E}\big[ \xi_X\big(\phi^{*,k,\epsilon}\big)\big] > \sup_{\phi\in U^T_0}\mathbb{E}\big[\xi_X(\phi)\big] -\epsilon,
\end{equation}
for every $k$ sufficiently large.

\end{theorem}

\begin{remark}\label{densityrem}
By Assumptions (A1-B1),

$$
\sup_{\phi\in U^T_0}\mathbb{E}\big[\xi_X(\phi)\big] =  \sup_{\phi\in \mathcal{R}}\mathbb{E}\big[\xi_X(\phi)\big],
$$
for any dense subset $\mathcal{R}$ of $U^T_0$ w.r.t $L^2_a(\mathbb{P}\times Leb)$-topology.
\end{remark}

We define $W_T:=\{(t,\omega); t\in [0,T]; \omega\in \Omega\}$ endowed with the metric

$$d_{1,\infty}((t,\omega); (t',\omega')): = \mathbf{d}_\infty(w,w') + |t-t'|,$$
where $\mathbf{d}_{\infty}$ is the metric of the compact-open topology on $\Omega$.
Then, $(W_T,d_{1,\infty})$ is a separable complete metric space equipped with the Borel $\sigma$-algebra.

Let $\pi:W_T\rightarrow W_T$ given by

$$\pi(t.w):=(t,w_{t}),$$
for $(t,w) \in W_T$. Any $\mathbb{F}$-progressively measurable process $\phi \in U^T_0$ can be written as

\begin{equation}\label{Frep}
\phi(t,\omega) = F(\pi(t,\omega)),
\end{equation}
where $F:W_T\rightarrow \mathbb{A}$ is Borel.

Next, we define $U^T_0(Lip):=\{F\circ \pi; \|F\|_{Lip} < \infty\}$, where $F:W_T\rightarrow \mathbb{A}$ satisfies

$$\|F\|_{Lip}:=\sup_{(t,w),(t',w') \in W_T}\frac{|F(t,w) - F(t',w')|}{d_{1,\infty} \big((t,w); (t',w')\big)}< \infty.$$

We recall two important results: Lusin and McShane's theorem which we recall here for convenience of the reader.

\begin{lemma}\label{lusin}
Let $X$ be a Polish space, $E$ a topological space with a countable basis (e.g., $E=\mathbb{A})$, $\mu$ a Borel probability measure on $X$, and $f : X \rightarrow E$ a Borel map. Then for each $\epsilon  >0$ there is a compact set $K \subset X$ such that $\mu(X - K) \le \epsilon$ and $f|_K$ is continuous.
\end{lemma}

\begin{lemma}\label{mcshane}
Let $(X,d)$ be a metric space, $(S,d)$ a subset of $X$ and $f:S\rightarrow \mathbb{R}^r$ a bounded $L$-Lipschitz function. Then, it admits an $L$-Lipschitz extension $\tilde{f}:X\rightarrow \mathbb{R}^r$ such that $\sup_{x \in X}|\tilde{f}(x)| = \sup_{x \in S}|f(x)|$.
\end{lemma}

\begin{lemma}\label{Liplemma}
The set $U^T_0(Lip)$ is an $L^2$-dense subset of $U^T_0$.
\end{lemma}
\begin{proof}
We apply Lusin's theorem as follows. Recall $W_T=[0,T]\times \Omega$ is Polish and let us consider the finite positive Borel measure $\mu = (Leb \times \mathbb{P}) \circ \pi$ on $W_T$. Let $\phi \in U^T_0$ be a control represented by $\phi  = F \circ \pi$, where $F:W_T\rightarrow \mathbb{A}$ is a Borel map. For a given $\epsilon>0$, we invoke Lemma \ref{lusin} to get a compact set $K_\epsilon \subset W_T$ such that $\mu (W_T \setminus K_\epsilon) < \epsilon$ and $F|_{K_\epsilon}$ is continuous (hence uniformly continuous). Let $F^\epsilon:K_\epsilon\rightarrow \mathbb{A}$ be a Lipschitz function (see item (3) of Th 6.4.1 in \cite{lipref}) such that

\begin{equation}\label{apl}
\sup_{(t,w)\in K_\epsilon}|F(t,w) - F^\epsilon(t,w)| < \epsilon.
\end{equation}
By Lemma \ref{mcshane}, we can select an extension $G^\epsilon:W_T\rightarrow \mathbb{R}^r$ such that $G^\epsilon|_{K_\epsilon} = F^\epsilon$, $\sup_{x \in W_T}|G^\epsilon(x)|= \sup_{x\in K_\epsilon}|F^\epsilon(x)|\le \bar{a}$ and $G^\epsilon$ has the same Lipschitz constant of $F^\epsilon$. We set $\phi_\epsilon = G^\epsilon \circ \pi$. We observe

\begin{eqnarray*}
\mathbb{E}\int_0^T|\phi(t) - \phi_\epsilon(t)|^2dt  &=& \int_{W_T} |F-G^\epsilon|^2d\mu = \int_{K_\epsilon} |F-F^\epsilon|^2d\mu\\
&+& \int_{W_T\setminus K_\epsilon} |F-G^\epsilon|^2d\mu\\
&\le& \epsilon^2 T + \bar{a}^2 \epsilon.
\end{eqnarray*}
This completes the proof of Lemma \ref{Liplemma}.
\end{proof}

\noindent \textbf{Proof of Theorem \ref{absrate1}:} The proof of (\ref{DPEexp}) is given in Appendix \ref{MSTappendix}, Propositions \ref{detarg} and \ref{AGREGATION}. Here, we will prove (\ref{quasirate}) and (\ref{nearOC}). In the sequel, $C$ is a generic positive constant which may differ from line to line. Let $D(\mathbb{R}_+;\mathbb{R}^d)$ be the space of c\`adl\`ag functions equipped with the Skorohod topology $\mathbf{d}$. We define $\overline{W}_T:=\{(t,\omega); t\in [0,T]; \omega\in D(\mathbb{R}_+;\mathbb{R}^d)\}$ and we endow this set with the metric

$$d_{1,s}((t,\omega); (t',\omega')): = \mathbf{d}(w,w') + |t-t'|.$$
Then, $(\overline{W}_T,d_{1,s})$ is a separable complete metric space equipped with the Borel $\sigma$-algebra.
Recall (see e.g item 3, Th 15.12 in \cite{he}) that the Skorohod topology $\mathbf{d}$ is equivalent to
the compact-open topology $\mathbf{d}_\infty$ restricted to the space $\Omega$. Therefore, we can view $\big(W_T, d_{1,\infty}\big) = \big(W_T, d_{1,s}\big)\subset \big(\overline{W}_T, d_{1,s}\big)$. Moreover, we can extend $\pi$ to $\overline{W}_T$ be setting $\pi(t,f): = (t,f_t) \in \overline{W}_T$ for each $(t,f) \in \overline{W}_T$. Having said that, for a given $\epsilon >0$, we choose $\phi_{\epsilon} = F_\epsilon \circ \pi \in U^T_0(Lip)$ with Lipschitz constant $L_\epsilon$ such that

\begin{equation}\label{RLip}
V(0) \le \mathbb{E}[\xi_X(\phi_\epsilon)] + \epsilon.
\end{equation}
By using Lemma \ref{mcshane}, we can select $\overline{F}_\epsilon: \overline{W}_T\rightarrow \mathbb{A}$ such that $\overline{F}_\epsilon|_{W_T} = F_\epsilon$ and with the same Lipschitz constant $L_\epsilon$ of $F_\epsilon$. We then define the control in $U^{k,e(k,T)}_0$ as follows

$$\phi^k_\epsilon(t)  := \sum_{n=0}^\infty (\overline{F}_\epsilon\circ \pi)(T^k_n, A^k_T)\mathds{1}_{\{T^k_n < t\wedge T^k_{e(k,T)}\le T^k_{n+1}\}}; 0\le t\le T.$$

Because $d_{1,s}$ is weaker than $d_{1,\infty}$ on $D(\mathbb{R}_+;\mathbb{R}^d)$ (see e.g Th 15.12 in \cite{he}), we have

$$
\sup_{0\le t\le T}|\phi^k_\epsilon(t) - \phi_\epsilon(t)|^2\le C |L_\epsilon|^2 \Big\{ |\max_{n\ge 0}\Delta T^k_{n+1}|^2\mathds{1}_{\{T^k_n \le T\}} + |T-T^k_{e(k,T)}|^2\mathds{1}_{\{T^k_{e(k,T)}< T\}} + \epsilon_k^2\Big\},
$$
for every $k \ge 1$. Therefore, by using Lemma \ref{Tkrate} in Appendix \ref{LDappendix}, for a given $0 < \beta < 1$ and $0 < \gamma \le 1$, we get

\begin{eqnarray*}
\Bigg(\mathbb{E}\int_0^T|\phi^k_\epsilon(t) - \phi_\epsilon(t)|^2dt\Bigg)^{\frac{\gamma}{2}}&\le& C |L_\epsilon|^\gamma \Big\{\epsilon_k^{2\gamma}\big (1 + \epsilon_k^{-2}\big)^{\frac{\gamma(1-\beta)}{2}} + \epsilon^{\gamma \beta }_k \Big\}\\
&\le& C |L_\epsilon|^\gamma \epsilon_k^{\gamma \beta},
\end{eqnarray*}
for $k\ge 1$ sufficiently large. By Assumptions (A1-B1) and the fact that $\big(X,\mathcal{X}\big)$ is a strongly controlled Wiener functional, from (\ref{RLip}), we observe

\begin{eqnarray*}
V(0) &\le&\mathbb{E}[\xi_X(\phi_\epsilon)] + \epsilon\\
 &=& \mathbb{E}[\xi_X(\phi_\epsilon)] + \mathbb{E}[\xi_X(\phi^k_\epsilon)] - \mathbb{E}[\xi_X(\phi^k_\epsilon)] + \epsilon\\
&\le& \|\xi\|_\gamma \mathbb{E}\|X^{\phi_\epsilon} - X^{\phi^k_\epsilon}\|^\gamma_\infty + \mathbb{E}[\xi_X(\phi^k_\epsilon)] + \mathbb{E}[\xi_{X^k}(\phi^k_\epsilon)] - \mathbb{E}[\xi_{X^k}(\phi^k_\epsilon)] + \epsilon\\
&\le& \|\xi\|_\gamma \big(\mathbb{E}\|X^{\phi_\epsilon} - X^{\phi^k_\epsilon}\|^2_\infty\big)^{\frac{\gamma}{2}} +
\|\xi\|_\gamma \mathbb{E}\|X^{k,\phi^k_\epsilon} - X^{\phi^k_\epsilon}\|_\infty^\gamma + V_k(0) + \epsilon\\
&\le& C\big\{|L_\epsilon|^\gamma\epsilon_k^{\gamma \beta} + h^\gamma_k\}  + V_k(0) + \epsilon,
\end{eqnarray*}
for every $k\ge 1$ sufficiently large. Again, by Assumptions (A1-B1) and the fact that $\big(X,\mathcal{X}\big)$ is a strongly controlled Wiener functional, we observe

\begin{eqnarray*}
\Big|V_k(0) - \sup_{\phi \in U^{k,e(k,T)}_0}\mathbb{E}\big[ \xi_{X}(\phi) \big]\Big|&=&\Big|\sup_{\phi \in U^{k,e(k,T)}_0}\mathbb{E}\big[ \xi_{X^k}(\phi) \big] - \sup_{\phi \in U^{k,e(k,T)}_0}\mathbb{E}\big[ \xi_{X}(\phi) \big]\Big|\\
&\le& \sup_{\phi \in U^{k,e(k,T)}_0} \Big|\mathbb{E}\big[ \xi_{X^k}(\phi) \big] - \mathbb{E}\big[ \xi_{X}(\phi) \big]\Big|\\
&\le& \|\xi\|_\gamma \sup_{\phi \in U^{k,e(k,T)}_0} \mathbb{E}\|X^\phi - X^{k,\phi}\|^\gamma_\infty\\
&\le& C h^\gamma_k,
\end{eqnarray*}
for $k\ge 1$. Hence,

\begin{eqnarray}
\nonumber V_k(0)&=& \sup_{\phi \in U^{k,e(k,T)}_0}\mathbb{E}\big[ \xi_{X^k}(\phi) \big] - \sup_{\phi \in U^{k,e(k,T)}_0}\mathbb{E}\big[ \xi_{X}(\phi) \big] + \sup_{\phi \in U^{k,e(k,T)}_0}\mathbb{E}\big[ \xi_{X}(\phi) \big] \\
\label{oth}&\le& C h^\gamma_k + V(0),
\end{eqnarray}
for $k\ge 1$. Therefore, for a given $\epsilon >0$ and $\beta \in (0,1)$, we have

$$V_k(0) - V(0)\le C h^\gamma_k\quad \text{and}\quad V(0) - V_k(0)\le C\big\{ |L_\epsilon|^\gamma \epsilon_k^{\gamma \beta} + h^\gamma_k\big\} + \epsilon, $$
for every $k\ge 1$ sufficiently large. This shows that for every $\epsilon>0$ and $\beta \in (0,1)$, there exists a constant $C$ which depends on $\beta$, $\|\xi\|_\gamma$ and Assumption (B1) such that

$$|V(0) - V_k(0)|\le C \big\{ h^\gamma_k + |L_\epsilon|^\gamma\epsilon_k^{\gamma \beta}\big\} + \epsilon,$$
for every $k\ge 1$ sufficiently large. Now, for a given $\epsilon >0$, we have

$$|V_k(0) - V(0)| < \frac{\epsilon}{3},$$
for every $k\ge 1$ sufficiently large. We select $\phi^{*,k,\epsilon} \in U^{k,e(k,T)}_0$ such that

$$\mathbb{E}\big[ \xi_{X^k}(\phi^{*,k,\epsilon})\big]\ge V_k(0) -\frac{\epsilon}{3};~k\ge 1.$$
Since $\big(X,\mathcal{X}\big)$ is a strongly controlled Wiener functional, we also know that

$$\Bigg|\mathbb{E}\big[ \xi_{X^k}(\phi^{*,k,\epsilon})\big] - \mathbb{E}\big[ \xi_{X}(\phi^{*,k,\epsilon})\big]\Bigg| < \frac{\epsilon}{3},$$
for every $k\ge 1$ sufficiently large. Therefore,

$$\mathbb{E}\big[ \xi_{X}(\phi^{*,k,\epsilon})\big] + \frac{\epsilon}{3} > \mathbb{E}\big[ \xi_{X^k}(\phi^{*,k,\epsilon})\big] \ge V_k(0)-\frac{\epsilon}{3}> V(0) - \frac{2\epsilon}{3},$$
for every $k\ge 1$ sufficiently large. This concludes the proof.

\

\begin{remark}\label{Liprem}
The rate obtained in (\ref{quasirate}) is a function of the Lipschitz constant $L_\epsilon$ of a near $\epsilon$-optimal Lipschitz control in $U^T_0(Lip)$. See (\ref{RLip}). If there exists a Lipschitz optimal control $\phi^{*} \in U^T_0(Lip)$, i.e.,

$$\mathbb{E}\big[ \xi_{X}(\phi^{*})\big] = V(0),$$
then the rate becomes
$$
\Big|\sup_{\phi\in U^T_0}\mathbb{E}\big[\xi_X(\phi)\big] - V_k(0)\Big|\lesssim h^\gamma_k + \epsilon_k^{\gamma \beta},
$$
for every $k\ge 1$ sufficiently large. In general, none Lipschitz optimal control exists and the rate will depend on the growth of $L_\epsilon$ as $\epsilon\downarrow 0$. For a given $\beta \in (0,1)$ and $\gamma \in (0,1]$, the natural question is under what condition on $ \phi \mapsto \mathbb{E}[\xi_X(\phi)]$, there exists $\delta \in (0,1]$ such that

$$\lim_{k\rightarrow +\infty}|L_{\epsilon^\delta_k}|^\gamma \epsilon_k^{\gamma\beta}=0 ~?$$
The analysis of this asymptotic behavior is non-trivial and it will be postponed to a future project. The next section presents a more precise rate in case the diffusion coefficient is independent from the control variable.
\end{remark}

Next, we present the rates $h_k$ associated with the strongly controlled Wiener functionals (\ref{pdsdeBM}), (\ref{limsdefbm}) and (\ref{priceprocess}). In the sequel, we employ the notation $r_k\lesssim \epsilon_k^{a^-}$ for a non-negative sequence $r_k$ and $a>0$ as follows: For every $0 < \eta < 1$ sufficiently close to $1$, there exists a constant $C$ which depends on $\eta$ and $a$ such that

\begin{equation}\label{lessNOTA}
r_k \le C \epsilon_k^{a\eta},
\end{equation}
for every $k\ge 1$.

\begin{proposition}\label{mainPrSDEBM}
Let $X^u$ be the controlled SDE

\begin{equation}\label{sde1}
dX^u(t) = \alpha(t,X^u_t, u(t))dt + \sigma(t,X^u_t,u(t))dB(t),
\end{equation}
where the non-anticipative functionals $(\alpha,\sigma)$ satisfy Assumption (C1) with $0 < \theta \le 1$. Let $\mathcal{X} = \big((X^k)_{k\ge 1},\mathscr{D}\big)$ be the controlled imbedded discrete structure associated with $X$ given by (\ref{xksde}). Then,

$$
\sup_{\phi \in U^{k,e(k,T)}_0}\mathbb{E}\big\|X^{k,\phi}_T - X^\phi_T\big\|_\infty \lesssim \epsilon_k^{\big(\frac{1}{2}\wedge 2\theta\big)^-}\rightarrow 0,
$$
as $k\rightarrow \infty$. In case the diffusion component is independent from the control variables $\sigma(t,f_t,a) = \sigma(t,f_t)$, then

\begin{equation}\label{sstrong1}
\sup_{\phi \in U^{T}_0}\mathbb{E}\big\|X^{k,\phi}_T - X^\phi_T\big\|_\infty \lesssim \epsilon_k^{\big(\frac{1}{2}\wedge 2\theta\big)^-}  \rightarrow 0,
\end{equation}
as $k\rightarrow \infty$.

\end{proposition}

\begin{proposition}\label{mainPrSDEFBM}
Let $X^u$ be the controlled SDE

\begin{equation}\label{sde2}
dX^u(t) = \alpha(t,X^u_t, u(t))dt + \sigma dB_H(t),
\end{equation}
where $B_H$ is a real-valued FBM with $0 < H < \frac{1}{2}$, the non-anticipative functional $\alpha$ satisfies Assumption (C1) with $0 < \theta \le 1$ and $\sigma >0$. Let $\mathcal{X} = \big((X^k)_{k\ge 1},\mathscr{D}\big)$ be the controlled imbedded discrete structure associated with $X$ given by (\ref{xksdeFBM}). Then,

\begin{equation}\label{sstrong2}
\sup_{\phi \in U^T_0}\mathbb{E}\|X^{k,\phi}_T -  X^{\phi}_T\|_\infty \lesssim \epsilon_k^{(H\wedge 2\theta)^-}\rightarrow 0,
\end{equation}
as $k\rightarrow +\infty$.

\end{proposition}

\begin{proposition}\label{mainPrRV}
Fix $0 < H < \frac{1}{2}$. Let $X^u$ be the controlled SDE
\begin{equation}\label{sde3}
\left\{
\begin{array}{lc}
dX^u(t) = X^u(t) \mu(u(t))dt + X^u(t)  \vartheta(Z(t),u(t)) dB^1(t) \\
dZ(t) = \nu dW_H(t)- \beta(Z(t)-m)dt,& ~Z(0)=z_0,
\end{array}
\right.
\end{equation}
where $m=0$ (for simplicity), $\beta, \nu >0$ and $\vartheta,\mu$ satisfy Assumption (D1). Let $\mathcal{X} = \big((X^k)_{k\ge 1},\mathscr{D}\big)$ be the controlled imbedded discrete structure associated with $X$ given by (\ref{xksdeRV}). Then,

$$
\sup_{\phi \in U^{k,e(k,T)}_0}\mathbb{E}\|X^{k,\phi}_T - X^{\phi}_T\|_\infty \lesssim \epsilon_k^{H^-}\rightarrow 0,
$$
as $k\rightarrow +\infty$. In case $\vartheta$ is independent from the control variables $\vartheta(z,a) = \vartheta(z)$, then

\begin{equation}\label{sstrong3}
\sup_{\phi \in U^{T}_0}\mathbb{E}\|X^{k,\phi}_T - X^{\phi}_T\|_\infty \lesssim \epsilon_k^{H^-}\rightarrow 0,
\end{equation}
as $k\rightarrow +\infty$.

\end{proposition}

\begin{remark}\label{fullcontrolrem}
 We stress that (\ref{sstrong1}), (\ref{sstrong2}) and (\ref{sstrong3}) present the rates over the whole set of controls $U^{T}_0$ rather than $U^{k,e(k,T)}_0$. In order to remove the dependence on $L_\epsilon$ in Theorem \ref{absrate1}, those estimates will play a key role (see (\ref{cho1})) jointly with a suitable convexity assumption on the drift.
\end{remark}

The proofs of Propositions \ref{mainPrSDEBM}, \ref{mainPrSDEFBM} and \ref{mainPrRV} are given in Section \ref{Eulerproofs}.

\subsection{Optimal control of drifts}\label{pcdsection}

This section refines the convergence rate obtained in Theorem \ref{absrate1} in the particular case where only the drift term  contains the control variables. More precisely, we remove the dependence of $L_\epsilon$ in (\ref{quasirate}). For this purpose, we incorporate the following convexity assumption on the drift.

\

\noindent \textbf{Assumption (F1)}: For every  $\pi_2(\mathbf{o}^k_{i})= \{(0,x_0),  (s^k_1,x_1^k) , \ldots , (s^k_{i}, x_i^k)\}  \in \big(\pi_2(\mathbb{H}_k)\big)^i$ and $t_i^k = s^k_0 + s^k_1 + \cdots + s^k_i$, the set

\[
\mathbb{D}_{i+1} (\mathbf{o}^k_{i}) :=\left\{  q \big({t^k_i},\gamma_i (\pi_2(\mathbf{o}^k_i)), a \big); ~ a \in \mathbb{A} \right\}\subset \mathbb{R}^n
\] is convex for each $i=0,1,2, \ldots , e(k,T)-1$. Here, $q(t,f,a):= \alpha(t,f_t,a)$ in the SDEs (\ref{sde1}) and (\ref{sde2}) and $q(t,f,a):=f(t)\mu(a)$ in the SDE (\ref{sde3}).

\

\begin{theorem}\label{THdriftrate}
Let $X$ be the controlled SDEs given by (\ref{sde1}), (\ref{sde2}) and (\ref{sde3}), where only the correspondent drifts contain the control variables and let $\mathcal{X} = \big((X^k)_{k\ge 1},\mathscr{D}\big)$ be the controlled imbedded discrete structures associated with $X$ given by (\ref{Xeuler}), (\ref{eulerFBMd}) and (\ref{Seuler}), respectively. Suppose that Assumptions (C1-D1-F1) are in force. Let $(h_k)_{k\ge 1}$ be the rate associated with $\mathcal{X}$ given in Propositions \ref{mainPrSDEBM}, \ref{mainPrSDEFBM} and \ref{mainPrRV}. Let $\xi$ be a payoff functional satisfying Assumption (A1) with $\gamma$-H\"older regularity. Then,

$$ \Big|\sup_{\phi\in U^T_0}\mathbb{E}\big[\xi_X(\phi)\big] - V_k(0)\Big| \lesssim h_k^\gamma\rightarrow 0,~\text{as}~k\rightarrow \infty.$$
\end{theorem}

The proof of Theorem \ref{THdriftrate} relies on the following argument. To keep notation simple, starting with a given initial condition $X^{k,u}(0)=x_0$ and a control $u \in U^T_0$, we set

$$
\mathcal{V}(\Delta X^{k,u}(T^k_{j})):=\left\{
\begin{array}{rl}
\sigma \big(T^{k}_{j},X^{k,u}_{T^{k}_{j}}\big)\Delta A^k(T^k_{j+1}),
; & \hbox{for} \ (\ref{Xeuler}) \\
\sigma \Delta B^k_H(T^k_{j+1});& \hbox{for} \ (\ref{eulerFBMd}) \\
X^{k,u}(T^{k}_{j})\vartheta(Z^k(T^k_j))\Delta A^{k,1}(T^k_{j+1});& \hbox{for} \ (\ref{Seuler})
\end{array}
\right.
$$
for $0\le j\le e(k,T)-1$. Recall
\begin{eqnarray}
\nonumber X^{k,u}(T^k_{j+1})&=&X^{k,u}(T^k_{j}) + \int_{T^k_{j}}^{T^k_{j+1}}q \Big(T^k_{j},X^{k,u}_{T^k_{j}},u(s)\Big)ds\\
\label{XeulerR}& &\\
\nonumber&+& \mathcal{V}(\Delta X^{k,u}(T^k_{j}))
\end{eqnarray}
for $0\le j\le e(k,T)-1$, where the function $q$ in (\ref{XeulerR}) is defined in Assumption (F1).

The operator $\Xi^{k,u}_j$ given by (\ref{Xioperator}) can be extended to controls in $U^T_0$ by setting

$$
\Xi^{k,u}_j :=\Big((\Delta T^k_1,\Delta A^k(T^k_1), \Delta X^{k, u}(T^k_1 ) ), \ldots, (\Delta T^k_j, \Delta A^k(T^k_j), \Delta X^{k, u}(T^k_j) ) \Big),
$$
for $1\le j\le e(k,T)$. By construction, for each $u \in U^T_0$,
$$X^{k,u}_T = \gamma_{e(k,T)} \big(\Xi^{k,u}_{e(k,T)}\big).$$
Mean value theorem yields the following pathwise statement: For every $u \in U^T_0$ (with continuous paths $t\mapsto u(t)$) and a deterministic path $\mathbf{o}^k_{i}$, there exists a random variable $a^k_i$ (which obviously depends on $\mathbf{o}^k_i$ and $u$) satisfying

\begin{equation}\label{repl}
\int_{t^k_i}^{t^k_i + \Delta T^k_{i+1} (\omega)} q \big({t^k_i},\gamma_i(\pi_2(\mathbf{o}^k_i)),u(s, \omega)\big)ds = q \big({t^k_i},\gamma_i(\pi_2(\mathbf{o}^k_i)), a^k_i (\omega) )\big) \Delta T^k_{i+1} (\omega),
\end{equation}
for each $\omega \in \Omega$ and $0\le i\le e(k,T)-1$.

\

\begin{lemma} \label{LB3}
For every control $u \in U_0^T$ (with continuous paths) and an integer $k\ge 1$, there exists a control  $u^k \in U^{k,e(k,T)}_0$  satisfying

\begin{equation}\label{contrCONS}
\mathbb{E}\big[\xi_{X^k}(u)\big]  = \mathbb{E}\big[\xi_{X^k}(u^k)\big].
\end{equation}
\end{lemma}
\begin{proof}
In the sequel, we fix $u \in U^T_0$ with continuous paths $t\mapsto u(t)$. To keep notation simple, we set $m=e(k,T)$. We will construct by forward induction the required discrete control $u^k \in U_0^{k, m-1}$. At step $i=0$, for a given $ \mathbf{o}^k_{0}=(0,0,x_0)$, we apply (\ref{repl}) at the first jump $\Delta X^{k,u}(T^k_1)$ so that

$$\mathbb{E} \left[ \xi_{X^k}(u) \mid \Xi^{k,u}_{0}=\mathbf{o}^k_0  \right]
$$

$$ = \mathbb{E} \left[ \xi \left(\gamma_m (\pi_2(\Xi^{k,u}_0), (\Delta T^k_1 , \Delta X^{k,u} (T^k_{1}) ), \ldots ,(\Delta T^k_m , \Delta X^{k,u} (T^k_{m}) )) \right) \mid \Xi^{k,u}_{0}  = \mathbf{o}^k_0  \right]
$$

$$= \mathbb{E} \left[ \xi \left(\gamma_m (x_0, (\Delta T^k_1 , \Delta X^{k,u} (T^k_{1}) ), \ldots ,(\Delta T^k_m , \Delta X^{k,u} (T^k_{m}) )) \right) \mid \Xi^{k,u}_{0}  = \mathbf{o}^k_0   \right]
$$
\begin{equation}\label{repl1}
= \mathbb{E} \left[ \xi \left(\gamma_m (x_0, (\Delta T^k_1 , \Delta X^{k,a^k_0} (T^k_{1}) ), \ldots ,(\Delta T^k_m , \Delta X^{k,u} (T^k_{m}) )) \right) \mid \Xi^{k,u}_{0}  = \mathbf{o}^k_0   \right],
\end{equation}
where $a^k_0$ is a random variable. At this point, we define $T_0:\mathbb{D}_1(\mathbf{o}^k_0)\rightarrow \mathbb{R}$ as follows:

\begin{equation}\label{Toperator}
T_0 \big(q (0,x_0, a) \big)
\end{equation}
$$
:=\mathbb{E} \left[ \xi \left(\gamma_m (x_0, (\Delta T^k_1 , \Delta X^{k,a\otimes_1u} (T^k_{1}) ), \ldots ,(\Delta T^k_m , \Delta X^{k,a\otimes_1 u} (T^k_{m}) )) \right) \mid \Xi^{k,u}_{0}  = \mathbf{o}^k_0   \right],
$$
for $a \in \mathbb{A}$. In  (\ref{Toperator}), we recall

$$\gamma_m (x_0, (\Delta T^k_1 , \Delta X^{k,a\otimes_1 u} (T^k_{1}) ), \ldots ,(\Delta T^k_m , \Delta X^{k,a\otimes_1u} (T^k_{m}) ))$$
$$ = x_0 +  \sum_{i=1}^m \Delta X^{k,a\otimes_1u}(T^k_i)\mathds{1}_{\{T^k_i\le \cdot\}},$$
where $a\otimes_1 u = a\otimes_{T^k_1} u$ is a concatenation of the constant control $a \in \mathbb{A}$ with $u \in U^T_0$ (see (\ref{concatenation})),

$$\Delta X^{k,a\otimes_1 u}(T^k_1) =  q(0,x_0, a)\Delta T^k_1 + \mathcal{V}(\Delta X^{k,a}(T^k_0)),$$
with $\Delta X^{k,a}(T^k_0) = x_0$ and


$$
\Delta X^{k,a\otimes_1 u}(T^k_{j+1}) = \int_{T^k_{j}}^{T^k_{j+1}}  q(T^k_{j}, X^{k,a\otimes_1 u}_{T^k_{j}},u(s))ds+ \mathcal{V}(\Delta X^{k,a\otimes_1u}(T^k_{j})),
$$
for $j=1,\ldots, m-1$. Assumptions (A1-C1-D1) imply that $e\mapsto T_0(e)$ is continuous on the compact set $ \mathbb{D}_1(\mathbf{o}^k_0)$. Let us consider

\[
r(\mathbf{o}^k_0) := \min_{a \in \mathbb{A}}  T_0(q(0,x_0;a)) \quad \text{and} \quad R(\mathbf{o}^k_0) := \max_{a \in \mathbb{A}} T_0(q(0,x_0;a)).
\]

By (\ref{repl}), (\ref{repl1}) and the definition of the map (\ref{Toperator}), we observe

\[
 r(\mathbf{o}^k_0)\leq \mathbb{E} \left[ \xi \left(\gamma_m (\mathbf{o}^k_0, (\Delta T^k_1 , \Delta X^{k,u} (T^k_{1}) ), \ldots ,(\Delta T^k_m , \Delta X^{k,u} (T^k_{m}) )) \right) \mid \Xi^{k,u}_{0}  = \mathbf{o}^k_0   \right] \leq R(\mathbf{o}^k_0).
\]
By Assumption (F1) and the continuity of $T_0:\mathbb{D}_1(\mathbf{o}^k_0)\rightarrow \mathbb{R}$, the image $T_0(\mathbb{D}_1(\mathbf{o}^k_0))$ is an interval on the real line. Therefore, for each $\mathbf{o}^k_0 = (0,0,x_0)$, there exists $a^k_{0} (\mathbf{o}^k_0) \in \mathbb{A}$ such that

\begin{equation}\label{Teq}
\mathbb{E} \left[ \xi \left(\gamma_m (\mathbf{o}^k_0, (\Delta T^k_1 , \Delta X^{k,u} (T^k_{1}) ), \ldots ,(\Delta T^k_m , \Delta X^{k,u} (T^k_{m}) )) \right) \mid \Xi^{k,u}_{0}  = \mathbf{o}^k_0   \right]= T_0 \big(q (0,x_0, a^k_0(\mathbf{o}^k_0)) \big).
\end{equation}

Next, we introduce the following Borel measurable function

\begin{eqnarray*}
y^k_0 (\mathbf{o}^k_0 , a) &:=& \Big| \mathbb{E} \left[ \xi \left(\gamma_m (\mathbf{o}^k_0, (\Delta T^k_1 , \Delta X^{k,u} (T^k_{1}) ), \ldots ,(\Delta T^k_m , \Delta X^{k,u} (T^k_{m}) )) \right) \mid \Xi^{k,u}_{0}  = \mathbf{o}^k_0   \right]  \\
&-&  T_0 \big(q (0,x_0,a) \big) \Big|^2.
\end{eqnarray*}
Relation (\ref{Teq}) yields

\[
y^k_0 (\mathbf{o}^k_0 , a^k_0(\mathbf{o}^k_0)) = \inf_{a \in \mathbb{A}} y^k_0 (\mathbf{o}^k_0 , a), \quad \mathbf{o}^k_0 \in \{0\}\times\{0\} \times \mathbb{R}^n.
\]

Therefore, Theorem 7.50 (b) in \cite{bertsekas} yields the existence of an universally measurable function $g^k_{0} :\{0\} \times \{0\}\times \mathbb{R}^n   \rightarrow   \mathbb{A}$ such that

\[
y^k_0 (\mathbf{o}^k_0 , g^k_0 (\mathbf{o}^k_0)) = y^k_0 (\mathbf{o}^k_0 , a^k_0(\mathbf{o}^k_0)) = 0, \quad \mathbf{o}^k_0 \in \{0\} \times \{0\}\times \mathbb{R}^n.
\] Then, we conclude that

\[\mathbb{E} \left[ \xi \left(\gamma_m (\mathbf{o}^k_0, (\Delta T^k_1 , \Delta X^{k,u} (T^k_{1}) ), \ldots ,(\Delta T^k_m , \Delta X^{k,u} (T^k_{m}) )) \right) \mid \Xi^{k,u}_{0}  = \mathbf{o}^k_0   \right]=
\]

\[
 \mathbb{E} \left[ \xi \left(\gamma_m (\mathbf{o}^k_0, (\Delta T^k_1 , \Delta X^{k,g^k_0(\mathbf{o}^k_0)\otimes_1u} (T^k_{1}) ), \ldots ,(\Delta T^k_m , \Delta X^{k,g^k_0(\mathbf{o}^k_0)\otimes_1u} (T^k_{m}) )) \right) \mid \Xi^{k,u}_{0}  = \mathbf{o}^k_0   \right].
\]
By construction, if $u^k_0 (t) := g^k_{0} (\mathbf{o}^k_0) 1\!\!1_{ \{0 < t \leq T^k_1 \} }$, then

\begin{eqnarray}
\nonumber\mathbb{E}\Big[\xi_{X^k}(u)\Big]  &=& \mathbb{E}\Big[\mathbb{E}\big[\xi_{X^k}(u)|\Xi^{k,u}_0\big]\Big]\\
\nonumber&=&  \mathbb{E}\Big[\mathbb{E}\big[\xi_{X^k}(u^k_0 \otimes_1 u)|\Xi^{k,u}_0\big]\Big]\\
\label{it0}& = &\mathbb{E}\Big[\xi_{X^k}(u^k_0 \otimes_1 u)\Big].
\end{eqnarray}
At step $i=1$ and for a deterministic path $\mathbf{o}^k_{1}=(0,x_0, (w^k_1, x^k_1))$, we apply (\ref{repl}) to the second jump $\Delta X^{k,u^k_0\otimes_1 u}(T^k_2)$ and we get

$$
\mathbb{E} \left[ \xi_{X^k}(u^k_0\otimes_1 u) \mid \Xi^{k,u^k_0}_{1} = \mathbf{o}^k_1  \right]$$
$$= \mathbb{E} \left[ \xi \left(\gamma_m (\pi_2(\Xi^{k,u^k_0}_1), (\Delta T^k_2 , \Delta X^{k,u^k_0\otimes_1 u} (T^k_{2}) ), \ldots ,(\Delta T^k_m , \Delta X^{k,u^k_0\otimes_1 u} (T^k_{m}) )) \right) \mid \Xi^{k,u^k_0}_{1}  = \mathbf{o}^k_1  \right]
$$
$$= \mathbb{E} \left[ \xi \left(\gamma_m (\pi_2(\mathbf{o}^k_1), (\Delta T^k_2 , \Delta X^{k,u^k_0\otimes_1 u} (T^k_{2}) ), \ldots ,(\Delta T^k_m , \Delta X^{k,u^k_0\otimes_1 u} (T^k_{m}) )) \right) \mid \Xi^{k,u^k_0}_{1}  = \mathbf{o}^k_1   \right]$$
\begin{equation}\label{repl2}
=\mathbb{E} \left[ \xi \left(\gamma_m (\pi_2(\mathbf{o}^k_1), (\Delta T^k_2 , \Delta X^{k,u^k_0\otimes_1 a^k_1\otimes_2 u} (T^k_{2}) ), \ldots ,(\Delta T^k_m , \Delta X^{k,u^k_0\otimes_1 u} (T^k_{m}) )) \right) \mid \Xi^{k,u^k_0}_{1}  = \mathbf{o}^k_1   \right],
\end{equation}
where the existence of the random variable $a^k_1$ in (\ref{repl2}) is due to (\ref{repl}). We define $T_1:\mathbb{D}_2(\mathbf{o}^k_1)\rightarrow \mathbb{R}$ as follows:

$$
T_1 \big(q (t^k_1,\gamma_1(\pi_2(\mathbf{o}^k_1), a) \big):=
$$
$$\mathbb{E} \Big[ \xi \Big(\gamma_m (\pi_2(\mathbf{o}^k_1), (\Delta T^k_2 , \Delta X^{k,u^k_0\otimes_1 a\otimes_2 u} (T^k_{2}) ), \ldots ,(\Delta T^k_m , \Delta X^{k,u^k_0\otimes_1 a\otimes_2 u} (T^k_{m}) ))\Big)\big|  \Xi^{k,u^k_0}_{1}  = \mathbf{o}^k_1 \Big],$$
for $a \in \mathbb{A}$. We then proceed in the same way as in the step $i=0$ to find a universally measurable function $g^k_{1} :\mathbb{H}_k^0\times \mathbb{H}_k \rightarrow   \mathbb{A}$ such that $u^k_{1} (t) := g^k_{1} (\Xi^{k,u^k_0}_1) 1\!\!1_{ \{T^k_{1} < t \leq T^k_{2} \} }$ realizes

\begin{eqnarray}
\nonumber\mathbb{E}\Big[\xi_{X^k}(u^k_0\otimes_1 u)\Big]  &=& \mathbb{E}\Big[\mathbb{E}\big[\xi_{X^k}(u^k_0\otimes_1 u)|\Xi^{k,u^k_0}_1\big]\Big]\\
\nonumber &=&  \mathbb{E}\Big[\mathbb{E}\big[\xi_{X^k}(u^k_0 \otimes_1 u^k_1\otimes_2 u)|\Xi^{k,u^k_0}_1\big]\Big]\\
\label{it1}  &=& \mathbb{E}\Big[\xi_{X^k}(u^k_0 \otimes_1u^k_1\otimes_2 u)\Big].
\end{eqnarray}
By iterating this procedure via (\ref{it0}) and (\ref{it1}), we construct $u^k=u^k_0 \otimes_1 u^k_1 \otimes_2 \ldots\otimes_{m-1} u^k_{m-1}$ realizing (\ref{contrCONS}). By recalling that compositions of universally measurable functions are universally measurable, we also conclude $u^k = u^k_0 \otimes_1 u^k_1 \otimes_2 \ldots\otimes_{m-1} u^k_{m-1} \in U^{k,m}_0$. This concludes the proof.

\end{proof}
\noindent \textbf{Proof of Theorem \ref{THdriftrate}:} In the sequel, $C$ is a constant which may differ from line to line. For a given $\epsilon >0$, from Lemma \ref{Liplemma}, we may choose $\phi^{\epsilon}  \in U^T_0(Lip)$ such that
\begin{eqnarray}
\nonumber V(0) &\le& \mathbb{E}[\xi_X(\phi^\epsilon)] + \epsilon
= \mathbb{E}[\xi_X(\phi^\epsilon)] - \mathbb{E}[\xi_{X^k}(\phi^\epsilon)] + \mathbb{E}[\xi_{X^k}(\phi^\epsilon)] +\epsilon\\
\label{cho1}&\le& C h^\gamma_k + \mathbb{E}[\xi_{X^k}(\phi^\epsilon)] +\epsilon\\
\label{cho2}&=& C h^\gamma_k + \mathbb{E}[\xi_{X^k}(\phi^{k,\epsilon})] +\epsilon\\
\nonumber&\le&  C h^\gamma_k + V_k(0) +\epsilon
\end{eqnarray}
where (\ref{cho1}) is due to (\ref{sstrong1}), (\ref{sstrong2}) and (\ref{sstrong3}) given in Propositions \ref{mainPrSDEBM}, \ref{mainPrSDEFBM} and \ref{mainPrRV}, respectively. The equality (\ref{cho2}) is due to Lemma \ref{LB3} so that we may choose $\phi^{k,\epsilon} \in U^{k,e(k,T)}_0$ in such way that $\mathbb{E}[\xi_{X^k}(\phi^\epsilon)] = \mathbb{E}[\xi_{X^k}(\phi^{k,\epsilon})]$. By repeating the same argument as in (\ref{oth}), we also have $V_k(0)\le Ch_k^\gamma + V(0).$ The constant $C$ which appears in these estimates does not on $\epsilon$ and hence we conclude the proof.

\section{Proof of Propositions \ref{mainPrSDEBM}, \ref{mainPrSDEFBM} and \ref{mainPrRV}}\label{Eulerproofs}
Throughout this section, $C$ is a generic positive constant which may differ from line to line.

\subsection{Proof of Proposition \ref{mainPrSDEBM}}
Similar to the classical Euler scheme, we need to define a continuous version of $\mathbb{X}^{k,\phi}$ for $\phi \in U^T_0$. We set

$$\Sigma^{ij,k,\phi}(t):= 0 \mathds{1}_{\{t=0\}} + \sum_{n=1}^\infty \sigma^{ij}\big(T^{k}_{n-1}, \mathbb{X}^{k,\phi}_{T^{k}_{n-1}},\phi(t)\big)\mathds{1}_{\{T^{k}_{n-1} < t \le T^{k}_n\}},
$$
for $0\le t\le T, 1\le i\le n, 1\le j\le d$. We observe for each $\phi \in U^T_0$, $\Sigma^{k,\phi}$ is $\mathbb{F}$-progressively measurable.

We define

$$
\widehat{\mathbb{X}}^{k,\phi}(t):=x_0 + \int_0^t\alpha (\bar{s}_k, \mathbb{X}^{k,\phi}_{\bar{s}_k}, \phi(s))ds + \int_0^t{\Sigma}^{k,\phi}(s)dB(s),
$$
for $0\le t\le T$. It is important to notice that

$$\mathbb{X}^{k,\phi}(t) =\mathbb{X}^{k,\phi}(\bar{t}_k) = \widehat{\mathbb{X}}^{k,\phi}(\bar{t}_k),~\mathbb{X}^{k,\phi}_t = \mathbb{X}^{k,\phi}_{\bar{t}_k},$$
for every $t\in[0,T]$. The idea is to analyse

$$\mathbb{E}\|\mathbb{X}^{k,\phi}_T- X^{\phi}_T \|_\infty\le \mathbb{E}\|\mathbb{X}^{k,\phi}_T - \widehat{\mathbb{X}}^{k,\phi}_T \|_\infty + \mathbb{E}\|\widehat{\mathbb{X}}^{k,\phi}_T- X^{\phi}_T\|_\infty. $$

\begin{lemma}\label{passo1mult}
Under assumption (C1),
$$
\sup_{k\ge 1}\sup_{\phi\in U^{T}_0}\mathbb{E}\|\mathbb{X}^{k,\phi}_T\|^p_\infty < \infty, \quad \forall p> 1.
$$
\end{lemma}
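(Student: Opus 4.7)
The plan is to exploit Lemma~\ref{basic1mult} to convert the Lebesgue--Stieltjes integral against $A^{k,j}$ into an It\^o integral against $B^j$ plus a small boundary correction, and then close the estimate via the Burkholder--Davis--Gundy inequality and Gr\"onwall's lemma. Since $\mathbb{X}^{k,\phi}(t)=\widehat{\mathbb{X}}^{k,\phi}(\bar{t}_k)$ for every $t\in[0,T]$, it is enough to bound $\mathbb{E}\|\widehat{\mathbb{X}}^{k,\phi}\|^p_{\infty}$ uniformly in $k$ and $\phi$; by Jensen's inequality I may assume $p\ge 2$.

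First I would observe that Assumption~(C1) combined with the compactness of $\mathbb{A}$ yields a linear-growth bound
$$\|\alpha(t,\omega,a)\|+\|\sigma(t,\omega,a)\|\le K(1+\|\omega\|_\infty),\quad (t,\omega,a)\in[0,T]\times\mathbf{D}_{n,T}\times\mathbb{A},$$
with $K$ depending only on $K_{Lip}$, $\bar{a}$ and the constants $\alpha(0,0,0),\sigma(0,0,0)$. Applying Lemma~\ref{basic1mult} componentwise to (\ref{Xhatmult}), I would rewrite
$$\widehat{\mathbb{X}}^{i,k,\phi}(t) = x^i + \int_0^t\alpha^i(\bar{s}_k,\mathbb{X}^{k,\phi}_{\bar{s}_k},\phi(s))\,ds + \sum_{j=1}^d \int_0^t \Sigma^{ij,k,\phi}(s)\,dB^j(s) - R^{i,k,\phi}(t),$$
where $R^{i,k,\phi}(t):=\sum_{j=1}^d \sigma^{ij}(\bar{t}_k,\mathbb{X}^{k,\phi}_{\bar{t}_k},\phi(\bar{t}_k))\bigl(B^j(t)-B^j(\bar{t}_k)\bigr)$. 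The crucial structural observation is that $|B^j(t)-B^j(\bar{t}_k)|<\epsilon_k$ \emph{pathwise} by the definition of the hitting times (\ref{stopping_times}), whence
$$\|R^{i,k,\phi}\|_{\infty,t}\le dK\epsilon_k\bigl(1+\|\widehat{\mathbb{X}}^{k,\phi}\|_{\infty,t}\bigr).$$

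Introducing the localizing stopping time $\tau_N:=\inf\{t\ge 0:\|\widehat{\mathbb{X}}^{k,\phi}(t)\|>N\}\wedge T$, I would apply Jensen to the drift integral, Burkholder--Davis--Gundy to the It\^o integrals (using $|\Sigma^{ij,k,\phi}(s)|\le K(1+\|\widehat{\mathbb{X}}^{k,\phi}\|_{\infty,s})$), and the boundary estimate above to obtain
$$\mathbb{E}\|\widehat{\mathbb{X}}^{k,\phi}\|^p_{\infty,t\wedge\tau_N}\le C_p\Bigl(1 + \int_0^t\mathbb{E}\|\widehat{\mathbb{X}}^{k,\phi}\|^p_{\infty,s\wedge\tau_N}\,ds + \epsilon_k^p\,\mathbb{E}\|\widehat{\mathbb{X}}^{k,\phi}\|^p_{\infty,t\wedge\tau_N}\Bigr),$$
with $C_p$ depending only on $p,T,d,K,\|x\|$, and independent of $k,\phi,N$. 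Choosing $k_0$ so that $C_p\epsilon_k^p\le 1/2$ for every $k\ge k_0$ lets me absorb the last term on the right; Gr\"onwall followed by monotone convergence as $N\to\infty$ yields the uniform bound for all $\phi$ and all $k\ge k_0$. For the finitely many $k<k_0$ the same BDG/Gr\"onwall scheme applied directly to the discrete martingale transform (\ref{Xeuler}), combined with the integrability of $N^k_T$ coming from Lemma~\ref{KEYLEMMAF1}, produces a finite bound $C_k$ uniform in $\phi$; the maximum over the two regimes concludes.

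The main obstacle is exactly the boundary correction $R^{i,k,\phi}$: being proportional to $\|\widehat{\mathbb{X}}^{k,\phi}\|_{\infty,t}$, it prevents an immediate closing of the Gr\"onwall loop. The way out is built into the skeleton itself, namely the deterministic envelope $|B^j(t)-B^j(\bar{t}_k)|<\epsilon_k$ together with the decay $\epsilon_k\downarrow 0$, which allows absorption for all but finitely many $k$.
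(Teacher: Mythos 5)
Your argument is correct and rests on the same ingredients the paper invokes (the linear growth bound from Assumption (C1), Burkholder--Davis--Gundy, Jensen and Gr\"onwall); the paper leaves the details to the reader, so the comparison is with the intended ``routine'' argument rather than with a written proof. Where you genuinely diverge is in routing the estimate through $\widehat{\mathbb{X}}^{k,\phi}$ and Lemma \ref{basic1mult}: this forces you to carry the boundary correction $R^{i,k,\phi}$, which is proportional to $\|\widehat{\mathbb{X}}^{k,\phi}\|_{\infty,t}$ and can only be absorbed for $k\ge k_0$, whence your case split. This detour is avoidable: since $\mathbb{X}^{k,\phi}(t)=\widehat{\mathbb{X}}^{k,\phi}(\bar{t}_k)$ and $R^{i,k,\phi}(T^k_n)=0$ at every grid point, one has pathwise
$$\|\mathbb{X}^{k,\phi}_t\|_\infty\le \|x\|+\sup_{s\le t}\Big|\int_0^s\alpha(\bar{r}_k,\mathbb{X}^{k,\phi}_{\bar{r}_k},\phi(r))\,dr\Big|+\sum_{j}\sup_{s\le t}\Big|\int_0^s\Sigma^{\cdot j,k,\phi}(r)\,dB^j(r)\Big|,$$
with no correction term at all, and Gr\"onwall applied to $s\mapsto\mathbb{E}\|\mathbb{X}^{k,\phi}_s\|^p_\infty$ (which is what the paper indicates) closes uniformly in $k$ and $\phi$ without any absorption or distinction between small and large $k$. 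Your version buys nothing extra but is not wrong. Two minor points you should tighten: (i) the a priori finiteness needed to run Gr\"onwall is not automatic because the number of steps $N^k_T$ is random and unbounded, so your localization by $\tau_N$ (or an equivalent device) is genuinely needed and should be kept even in the simplified route; (ii) for the finitely many $k<k_0$ you appeal to ``integrability of $N^k_T$ coming from Lemma \ref{KEYLEMMAF1}'', but that lemma only gives $L^p$ bounds on $T^k_{e(k,t)}-t$, not moments of the counting process $N^k_T$; the correct reference is the renewal/large-deviation structure of the iid waiting times $\Delta T^k_n$ (cf.\ (\ref{tildepsifunc})), or simply the grid-point Gr\"onwall argument above, which makes the case split unnecessary.
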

\begin{proof}
From Assumption (C1), there exists a constant $C$ such that

\begin{equation}\label{r2mult}
|\alpha^i(t,\omega,a)| + |\sigma^{ij}(t,\omega,a)|\le C(1+ \|\omega_t\|_\infty),
\end{equation}
for every $(t,\omega,a)\in [0,T]\times \mathbf{D}_{n,T}\times \mathbb{A}$ and $1\le i\le n,1\le j\le d$, where $C$ only depends on $T, \alpha(0,0,0)$, $\sigma(0,0,0)$, the compact set $\mathbb{A}$ and the Lipschitz constants $(K_{1,Lip}, K_{2,Lip})$. The proof consists on routine arguments based on Burkholder-Davis-Gundy and Jensen inequalities jointly with Gr\"{o}nwall's inequality on the function $s\mapsto \mathbb{E}\|\mathbb{X}^{k,\phi}_s\|^p_\infty$. Indeed, recall

$$\mathbb{X}^{k,\phi}(t) = x_0 + \int_0^{\bar{t}_k}\alpha(\bar{s}_k,\mathbb{X}^{k,\phi}_{\bar{s}_k},\phi(s))ds +\int_0^{\bar{t}^k}\Sigma^{k,\phi}(s)dB(s).$$

Fix $p \ge 2$. By applying Jensen's ineguality and using (\ref{r2mult}), we get

$$
\mathbb{E}\sup_{0\le t\le T}\Bigg|\int_0^{\bar{t}_k}\alpha (\bar{s}_k,\mathbb{X}^{k,\phi}_{\bar{s}_k},\phi(s))ds\Bigg|^p\le TC\Bigg(1+\int_0^T\mathbb{E}\|\mathbb{X}^{k,\phi}_{\bar{s}_k}\|^p_\infty ds\Bigg).
$$
Burkholder-Davis-Gundy and Jensen inequalities jointly with (\ref{r2mult}) yield

$$
\mathbb{E}\sup_{0\le t\le T}\Bigg|\int_0^{t}\Sigma^{ij,k,\phi}(s)dB^j(s)\Bigg|^p\le\mathbb{E}\Bigg(\int_0^T|\Sigma^{ij,k,\phi}(s)|^2ds\Bigg)^{\frac{p}{2}}
$$
$$\le C\mathbb{E}\int_0^T|\Sigma^{ij,k,\phi}(s)|^pds\le C \Big(1+\int_0^T\mathbb{E}\|\mathbb{X}^{k,\phi}_{\bar{s}_k}\|^p_\infty ds\Big).$$

Summing up the above estimates, we have

$$\mathbb{E}\|\mathbb{X}^{k,\phi}_T\|^p_\infty\le C\Bigg(1+ \int_0^T\mathbb{E}\|\mathbb{X}^{k,\phi}_{s}\|^p_\infty ds \Bigg).$$
Grownall's inequality and the fact that $C$ does not depends on on $\phi$ and $k\ge 1$ allow us to conclude the result.
\end{proof}

\begin{lemma}\label{passo2mult}
Fix $p\ge 2$ such that $\theta p \ge 1$, where $\theta \in (0,1]$ is the H\"older regularity of the time variable in Assumption (C1). For a given $0 < \beta< 1$, there exists a constant $C$ which only depends on $\alpha$, $\sigma, \beta, p, \theta$ and $T$  such that

\begin{equation*}
\begin{split}
\mathbb{E}\|\widehat{\mathbb{X}}^{k,\phi}_T- X^{\phi}_T\|^p_\infty \le C\Bigg\{ \epsilon_k^{2\theta p - 2(1-\beta)}   + \int_0^T\mathbb{E}\|\mathbb{X}^{k,\phi}_{s}- X^{\phi}_s\|^p_\infty ds \Bigg\},\\
\end{split}
\end{equation*}
for every $\phi \in U^{T}_0$ and $k\ge 1$.
\end{lemma}
\begin{proof}
Let us fix $1\le i\le n$, $\phi\in U^{T}_0$, $\beta \in (0,1)$, $\theta \in (0,1]$. Let $p\ge 2$ such that $p\theta\ge 1$. We shall write

\begin{equation*}\label{eu1}
\begin{split}
\widehat{\mathbb{X}}^{i,k,\phi}(t) - X^{i,\phi}(t) &= \int_0^t\Big[\alpha^i(\bar{s}_k,\mathbb{X}^{k,\phi}_{\bar{s}_k},\phi(s)) - \alpha^i(s,X^{\phi}_{s},\phi(s))\big]ds\\
&+\sum_{j=1}^d\int_0^t\Big[\Sigma^{ij,k,\phi}(s) - \sigma^{ij}(s,X^{\phi}_s,\phi(s))\Big]dB^j(s).
\end{split}
\end{equation*}
Jensen's inequality yields

$$\sup_{0\le t\le T}\Bigg|\int_0^t\Big[\alpha^i(\bar{s}_k,\mathbb{X}^{k,\phi}_{\bar{s}_k},\phi(s)) - \alpha^i(s,X^{\phi}_{s},\phi(s))\big]ds\Bigg|^p$$
$$\le T \int_0^T\Big|\alpha^i(\bar{s}_k,\mathbb{X}^{k,\phi}_{\bar{s}_k},\phi(s)) - \alpha^i(s,X^{\phi}_{s},\phi(s))\big|^pds. $$
By using Assumption (C1), there exists a constant $C$ which only depends on $K_{Lip}$ such that

\begin{eqnarray*}
|\alpha^i(\bar{s}_k,\mathbb{X}^{k,\phi}_{\bar{s}_k},\phi(s)) - \alpha^i(s,X^{\phi}_{s},\phi(s))\big|^p&\le& C \Big\{|\bar{s}_k - s|^{p\theta} + \|\mathbb{X}^{k,\phi}_{\bar{s}_k} - X^{\phi}_{s}\|^p_\infty\Big\}\\
&\le & C \Big(\max_{j\ge 0} \Delta T^k_{j+1} \mathds{1}_{\{T^k_j\le T\}}\Big)^{p\theta}\\
&+& C \|\mathbb{X}^{k,\phi}_{\bar{s}_k} - X^{\phi}_{s}\|^p_\infty~a.s,
\end{eqnarray*}
for every $s \in [0,T]$. Then, Lemma \ref{meshlemma} yields

$$\mathbb{E}\sup_{0\le t\le T}\Bigg|\int_0^t\Big[\alpha^i(\bar{s}_k,\mathbb{X}^{k,\phi}_{\bar{s}_k},\phi(s)) - \alpha^i(s,X^{\phi}_{s},\phi(s))\big]ds\Bigg|^p$$
$$\le C \epsilon_k^{2p\theta - 2(1-\beta)}  + C \int_0^T \mathbb{E}\|\mathbb{X}^{k,\phi}_{\bar{s}_k} - X^{\phi}_{s}\|^p_\infty ds,$$
where $C$ above depends on $K_{Lip}$, $\beta$, $\theta$, $p$ and $T$. Next, we consider the stochastic integral. By Burkholder-Davis-Gundy's inequality and Assumption (C1), there exists a constant $C$ such that

$$\mathbb{E}\sup_{0\le t\le T}\Bigg|\int_0^t\Big[\Sigma^{ij,k,\phi}(s) - \sigma^{ij}(s,X^{\phi}_s,\phi(s))\Big]dB^j(s)\Bigg|^p$$
$$\le C \mathbb{E}\Bigg(\int_0^T \Big|\Sigma^{ij,k,\phi}(s) - \sigma^{ij}(s,X^{\phi}_s,\phi(s))\Big|^2ds\Bigg)^{\frac{p}{2}},$$
where

\begin{eqnarray*}
\Big|\Sigma^{ij,k,\phi}(s) - \sigma^{ij}(s,X^{\phi}_s,\phi(s))\Big|^2 &=& \Big|\sigma^{ij}(T^k_{n-1}, \mathbb{X}^{k,\phi}_{T^k_{n-1}}, \phi(s)) - \sigma^{ij}(s, X^{\phi}_s, \phi(s))\Big|^2\\
&\le & C \Big \{ |s-T^k_{n-1}|^{2\theta} + \| \mathbb{X}^{k,\phi}_{\bar{s}_k} - X^\phi_s\|^2_\infty \Big\}\quad \text{on}~\{T^k_{n-1} < s < T^k_n\}\\
&\le& C \Big\{ \big(\max_{j\ge 0} \Delta T^k_{j+1} \mathds{1}_{\{T^k_j\le T\}}\big)^{2\theta}+ \| \mathbb{X}^{k,\phi}_{\bar{s}_k} - X^\phi_s\|^2_\infty \Big\}.
\end{eqnarray*}
Therefore, Lemma \ref{meshlemma} yields

$$\mathbb{E}\sup_{0\le t\le T}\Bigg|\int_0^t\Big[\Sigma^{k,\phi}(s) - \sigma(s,X^{\phi}_s,\phi(s))\Big]dB(s)\Bigg|^p$$
$$\le C \epsilon_k^{2\theta p - 2(1-\beta)}  + C \int_0^T \mathbb{E}\|\mathbb{X}^{k,\phi}_{\bar{s}_k} - X^{\phi}_{s}\|^p_\infty ds,$$
where the constant $C$ above depends on Assumption (C1), $\beta$, $\theta$ and $p$. This concludes the proof.
\end{proof}

\begin{lemma}\label{passo3mult}
For every $p\ge 1$, there exists a constant $C$ which only depends on $T, \alpha,\sigma$ and $p$ such that
$$\sup_{\phi \in U^{k,\infty}_0}\mathbb{E}\|\mathbb{X}^{k,\phi}_T - \widehat{\mathbb{X}}^{k,\phi}_T\|^p_\infty \le C \epsilon_k^p,$$
for every $k\ge 1$. In case the control affects only the drift $\alpha$, then

$$\sup_{\phi \in U^{T}_0}\mathbb{E}\|\mathbb{X}^{k,\phi}_T - \widehat{\mathbb{X}}^{k,\phi}_T\|^p_\infty \le C  \epsilon_k^p,$$
for every $k\ge 1$.

\end{lemma}
\begin{proof}
Fix a control $\phi = \sum_{n=1}^\infty v^k_{n-1}\mathds{1}_{\{T^k_{n-1} < t \le T^k_n\}} \in U^{k,\infty}_0$ and $p\ge 1$. By definition,
$$\widehat{\mathbb{X}}^{k,\phi}(t) -  \mathbb{X}^{k,\phi}(t)= \alpha \Big(\bar{t}_k,\mathbb{X}^{k,\phi}_{\bar{t}_k},\phi(\bar{t}_k)\Big)(t-\bar{t}_k) + \sigma\Big(\bar{t}_k, \mathbb{X}^{k,\phi}_{\bar{t}_k}, \phi(\bar{t}_k)\Big)\Big(B(t) - B(\bar{t}_k)\Big)
$$
Then (\ref{r2mult}) yields

$$\|\widehat{\mathbb{X}}^{k,\phi}_T -  \mathbb{X}^{k,\phi}_T\|^p_\infty\le C \Big(1 + \|\mathbb{X}^{k,\phi}_T\|^p_\infty\Big)\Big\{ \big(\max_{n\ge 0}\Delta T^k_{n+1}\mathds{1}_{\{T^k_n\le T\}}\big)^{p} + \epsilon_k^p  \Big\}~a.s.$$
For a given $1 < q < \infty$, the H\"older's inequality jointly with Lemmas \ref{passo1mult} and \ref{meshlemma} in Appendix \ref{LDappendix} yield

\begin{eqnarray}
\nonumber\mathbb{E}\|\widehat{\mathbb{X}}^{k,\phi}_T -  \mathbb{X}^{k,\phi}_T\|^p_\infty\nonumber&\le & C\epsilon_k^{2p-\frac{2}{q}(1-\beta)} + \epsilon_k^p\\
\label{le1}&\le & C\epsilon_k^p,
\end{eqnarray}
for every $k\ge 1$. The constant $C$ in (\ref{le1}) does not depend on the controls of $U^{k,\infty}_0$. Now, we fix a control $\phi \in U^T_0$. We know that $\widehat{\mathbb{X}}^{k,\phi}(\bar{t}_k) =  \mathbb{X}^{k,\phi}(t)$ and hence

$$
\widehat{\mathbb{X}}^{k,\phi}(t) -  \mathbb{X}^{k,\phi}(t)= \int_{\bar{t}_k}^t \alpha (\bar{s}_k,\mathbb{X}^{k,\phi}_{\bar{s}_k},\phi(s))ds + \int_{\bar{t}_k}^t \Sigma^{k}(s)dB(s)
$$
where
\begin{equation}\label{Sigmawc}
\Sigma^{k}(t):= 0 \mathds{1}_{\{t=0\}} + \sum_{n=1}^\infty \sigma\big(T^{k}_{n-1}, \mathbb{X}^{k,\phi}_{T^{k}_{n-1}}\big)\mathds{1}_{\{T^{k}_{n-1} < t \le T^{k}_n\}},
\end{equation}
for $0\le t\le T$. It is important to notice that

$$
\widehat{\mathbb{X}}^{k,\phi}(t) -  \mathbb{X}^{k,\phi}(t)= \int_{\bar{t}_k}^t \alpha (\bar{s}_k,\mathbb{X}^{k,\phi}_{\bar{s}_k},\phi(s))ds + \sigma\Big(\bar{t}_k, \mathbb{X}^{k,\phi}_{\bar{t}_k}\Big) \big(B(t) - B(\bar{t}_k) \big),
$$
for $0\le t\le T$. Therefore, (\ref{r2mult}) yields

$$
\|\widehat{\mathbb{X}}^{k,\phi}_T -  \mathbb{X}^{k,\phi}_T\|^p_\infty\le C \big(1+ \|\mathbb{X}^{k,\phi}_T\|^p_\infty\big)\Big\{\big(\max_{n\ge 0}\Delta T^k_{n+1}\mathds{1}_{\{T^k_n\le T\}}\big)^{p} + \epsilon_k^p\Big\}~a.s,
$$
for $k\ge 1$. Hence, H\"older's inequality jointly with Lemmas \ref{passo1mult} and  \ref{meshlemma} yield (for a given $q>1$)

\begin{equation}\label{indc}
\mathbb{E}\|\widehat{\mathbb{X}}^{k,\phi}_T -  \mathbb{X}^{k,\phi}_T\|^p_\infty \le C\epsilon_k^{2p -\frac{2}{q}(1-\beta) } + \epsilon_k^p \le C \epsilon_k^p,
\end{equation}
for $k\ge 1$. The constant $C$ in (\ref{indc}) does not depend on controls. This concludes the proof.
\end{proof}

By Lemmas \ref{passo2mult}, \ref{passo3mult} and applying H\"older and Grownall's inequality, we then conclude the following result.

\begin{proposition}\label{semekT}
Fix $p\ge 2$ such that $\theta p \ge 1$, where $\theta \in (0,1]$ is the H\"older regularity of the time variable in Assumption (C1). For a given $0 < \beta< 1$, there exists a constant $C$ which only depends on $\alpha$, $\sigma, \beta, p, \theta$ and $T$  such that

\begin{equation*}
\begin{split}
\sup_{\phi \in U^{k,\infty}_0}\mathbb{E}\|\mathbb{X}^{k,\phi}_T - X^{\phi}_T\|_\infty&\le C \Big\{ \epsilon_k + \epsilon_k^{2\theta-\frac{2}{p}(1-\beta)}\Big\},
\end{split}
\end{equation*}
for every $k\ge 1$. In case the controls affect only the drift, we have
\begin{equation*}
\begin{split}
\sup_{\phi \in U^{T}_0}\mathbb{E}\|\mathbb{X}^{k,\phi}_T - X^{\phi}_T\|_\infty&\le  C \Big\{ \epsilon_k + \epsilon_k^{2\theta-\frac{2}{p}(1-\beta)}\Big\},
\end{split}
\end{equation*}
for $k\ge 1$.

\end{proposition}

Now we are able to present the proof of Proposition \ref{mainPrSDEBM}.

\

\noindent \textbf{Proof of Proposition \ref{mainPrSDEBM}}: Fix $p\ge 2$ such that $\theta p \ge 1$, where $\theta \in (0,1]$ is the H\"older regularity of the time variable in Assumption (C1). Fix an arbitrary $0 < \beta < 1$. For each $\phi\in U^{k,\infty}_0$, we shall  write
$$X^{k,\phi} - X^\phi = X^{k,\phi} - \mathbb{X}^{k,\phi} + \mathbb{X}^{k,\phi} - X^\phi,$$
and hence, by applying Proposition \ref{semekT}, we get

\begin{equation}\label{INTT1}
\sup_{\phi \in U^{k,\infty}_0}\mathbb{E}\| X^{k,\phi} - X^\phi\|_\infty\le C \Big\{ \epsilon_k + \epsilon_k^{2\theta-\frac{2}{p}(1-\beta)}\Big\} + C \sup_{\phi \in U^{k,\infty}_0}\mathbb{E}\| X^{k,\phi} - \mathbb{X}^{k,\phi}\|_\infty,
\end{equation}
for every $k\ge 1$. In the sequel, we fix $\phi \in U^{k,\infty}_0$. By the very definition, $\mathbb{X}^{k,\phi}(t) - X^{k,\phi}(t) = 0$ whenever $0\le t < T^k_{e(k,T)+1}$ and

\begin{eqnarray*}
\mathbb{X}^{k,\phi}(t) - X^{k,\phi}(t)&=& \int_{T^k_{e(k,T)}}^{\bar{t}_k} \alpha \big( \bar{s}_k, \mathbb{X}^{k,\phi}_{\bar{s}_k},\phi(s)\big)ds\\
&+& \int_{T^k_{e(k,T)}}^{\bar{t}_k}
\Sigma^{k,\phi}(s)dB(s),
\end{eqnarray*}
 whenever $T^k_{e(k,T)+1}\le t \le T$. By the additivity of the It\^o's integral, we shall write

 \begin{equation}\label{spd1}
 \int_{T^k_{e(k,T)}}^{\bar{t}_k}
\Sigma^{k,\phi}(s)dB(s) = \int_{\big(T^k_{e(k,T)},t\big]}
\Sigma^{k,\phi}(s)dB(s) - \int_{(\bar{t}_k,t]}
\Sigma^{k,\phi}(s)dB(s)
\end{equation}
for $T^k_{e(k,T)+1}\le t \le T$. We notice that $\phi \in U^{k,\infty}_0$ implies

\begin{eqnarray}
\nonumber\sup_{0\le t\le T}\Bigg|\int_{(\bar{t}_k,t]}
\Sigma^{k,\phi}(s)dB(s)\Bigg| &=& \sup_{0\le t\le T}\Big|\sigma \big(\bar{t}_k, \mathbb{X}^{k,\phi}_{\bar{t}_k},\phi(\bar{t}_k) \big) \big( B(t) - B(\bar{t}_k) \big)\Big|\\
\label{spd2}&\le& C (1+ \|\mathbb{X}^{k,\phi}_T\|_\infty)\epsilon_k~a.s,
\end{eqnarray}
for $k\ge 1$. By applying Burkholder-Davis-Gundy's inequality, H\"older's inequality, (\ref{r2mult}), Lemma \ref{passo1mult} and (\ref{astk2}) in Lemma \ref{Tkrate}, we get

\begin{eqnarray}
\nonumber\mathbb{E}\sup_{0\le t\le T}\Bigg|\int_{(T^k_{e(k,T)},t]}
\Sigma^{k,\phi}(s)dB(s)\Bigg| &\le& C \mathbb{E}\Big[(1+ \|\mathbb{X}^{k,\phi}_T\|_\infty)|T^k_{e(k,T)}-T|^{\frac{1}{2}}\mathds{1}_{\{T^k_{e(k,T)} < T\}}\Big]\\
\label{spd2o}&\le& C \epsilon_k^{\frac{\beta}{2}},
\end{eqnarray}
for $k\ge 1$. By (\ref{spd1}), (\ref{spd2}) and (\ref{spd2o}), we have

\begin{equation}\label{spd2oo}
 \mathbb{E}\sup_{0\le t\le T}\Bigg| \int_{T^k_{e(k,T)}}^{\bar{t}_k}
\Sigma^{k,\phi}(s)dB(s)\Bigg|\le C \epsilon_k^{\frac{\beta}{2}},
\end{equation}
for $k\ge 1$. The constant $C$ in the right-hand side of (\ref{spd2oo}) does not depend on $k\ge 1$ and controls. Again, by applying H\"older's inequality, (\ref{r2mult}), Lemma \ref{passo1mult} and (\ref{astk2}) in Lemma \ref{Tkrate}, we get (for $q>1$)

\begin{eqnarray}
\nonumber\mathbb{E}\sup_{0\le t\le T}\Bigg|\int_{T^k_{e(k,T)}}^{\bar{t}_k} \alpha \big( \bar{s}_k, \mathbb{X}^{k,\phi}_{\bar{s}_k},\phi(s)\big)ds\Bigg|&\le& C \Big(\mathbb{E}|T-T^k_{e(k,T)}|^{q}\mathds{1}_{\{T^k_{e(k,T)} < T\}}\Big)^{\frac{1}{q}}\\
\label{spd2ooo}&\le& C \epsilon_k^{\beta},
\end{eqnarray}
for $k\ge 1$. From (\ref{INTT1}), (\ref{spd2oo}) and (\ref{spd2ooo}), we get

$$
\sup_{\phi \in U^{k,\infty}_0}\mathbb{E}\| X^{k,\phi}_T - X^\phi_T\|_\infty \le C \Big\{ \epsilon^{\frac{\beta}{2}}_k + \epsilon_k^{2\theta-\frac{2}{p}(1-\beta)}\Big\},
$$
for $k\ge 1$. Next, we treat the case when $\sigma$ is independent from the control variable. Proposition \ref{semekT} yields

$$\sup_{\phi \in U^{T}_0}\mathbb{E}\| X^{k,\phi}_T - X^\phi_T\|_\infty\le C \Big\{ \epsilon_k + \epsilon_k^{2\theta-\frac{2}{p}(1-\beta)}\Big\} + C \sup_{\phi \in U^{T}_0}\mathbb{E}\| X^{k,\phi}_T - \mathbb{X}^{k,\phi}_T\|_\infty.$$
Similar to (\ref{spd1}), we have
\begin{equation}\label{spd3}
\int_{T^k_{e(k,T)}}^{\bar{t}_k}
\Sigma^{k}(s)dB(s) = \int_{\big(T^k_{e(k,T)},t\big]}
\Sigma^{k}(s)dB(s) - \int_{(\bar{t}_k,t]}
\Sigma^{k}(s)dB(s),
\end{equation}
for $T^k_{e(k,T)+1}\le t \le T$. We notice that since $\sigma$ is not a function of the control and $\Sigma^k$ (recall (\ref{Sigmawc})) is a stepwise constant $\mathbb{F}$-predictable process which jumps only at the stopping times $\{T^k_n; n\ge 1\}$, we have

\begin{equation}\label{spd4}
\int_{(\bar{t}_k,t]}
\Sigma^{k}(s)dB(s) = \sigma \big(\bar{t}_k, \mathbb{X}^{k,\phi}_{\bar{t}_k} \big) \big( B(t) - B(\bar{t}_k) \big),
\end{equation}
for every $t \in [0,T]$. By (\ref{spd3}) and (\ref{spd4}), we observe we can apply the same arguments given in the first case to state

 $$\sup_{\phi \in U^{T}_0}\mathbb{E}\| X^{k,\phi}_T - X^\phi_T\|_\infty\le C \epsilon_k^{\frac{\beta}{2}},$$
 for $k\ge 1$. This concludes the proof.

\subsection{Proof of Proposition \ref{mainPrSDEFBM}}

Recall that the controlled structure $\mathcal{X} = \big((X^k)_{k\ge 1},\mathscr{D}\big)$ associated with (\ref{limsdefbm}) is given as follows: For a given control $\phi\in U^{T}_0$,

\begin{eqnarray*}
\nonumber\mathbb{X}^{k,\phi}(T^k_{n})&=&\mathbb{X}^{k,\phi}(T^k_{n-1}) + \int_{T^k_{n-1}}^{T^k_n}\alpha\big(T^k_{n-1},\mathbb{X}^{k,\phi}_{T^k_{n-1}},\phi(s)\big)ds\\
& &\\
\nonumber&+& \sigma\Delta B^k_{H}(T^k_{n}),
\end{eqnarray*}
where  $\mathbb{X}^{k,\phi}(t) = \sum_{n=0}^\infty \mathbb{X}^{k,\phi}(T^k_n)\mathds{1}_{\{T^k_n \le t < T^k_{n+1}\}}; t\ge 0$. Recall
$$
X^{k,\phi}(t)=\mathbb{X}^{k,\phi}(t\wedge T^k_{e(k,T)}); 0\le t\le T.
$$
Let us denote

$$\widehat{\mathbb{X}}^{k,\phi}(t):= x + \int_0^t \alpha (\bar{s}_k, \mathbb{X}^{k,\phi}_{\bar{s}_k},\phi(s))ds + \sigma B^k_H(t); t\ge 0.$$
By construction,

$$\mathbb{X}^{k,\phi}(t) =\mathbb{X}^{k,\phi}(\bar{t}_k) = \widehat{\mathbb{X}}^{k,\phi}(\bar{t}_k),~\mathbb{X}^{k,\phi}_t = \mathbb{X}^{k,\phi}_{\bar{t}_k},$$
for every $t\in[0,T]$. Recall the  notation (\ref{lessNOTA}). For $p\ge 1$, Theorem 3.5 in \cite{ohashifrancys} yields

\begin{equation}\label{raFBM}
\mathbb{E}\| B^k_H - B_H\|^p_\infty\lesssim \epsilon_k^{(pH)^-},
\end{equation}
for $k\ge 1$.

\

\noindent \textbf{Proof of Proposition \ref{mainPrSDEFBM}}: We fix $H \in (0,\frac{1}{2})$, $0 < \varepsilon < H$, $\phi\in U^T_0$ and the parameter $0< \theta\le 1$ from Assumption (C1). By invoking (\ref{raFBM}), the proof is similar and even simpler than Proposition \ref{mainPrSDEBM}. The idea is to analyse

$$\mathbb{E}\|\mathbb{X}^{k,\phi}_T- X^{\phi}_T \|_\infty\le \mathbb{E}\|\mathbb{X}^{k,\phi}_T - \widehat{\mathbb{X}}^{k,\phi}_T \|_\infty + \mathbb{E}\|\widehat{\mathbb{X}}^{k,\phi}_T- X^{\phi}_T\|_\infty, $$
and

\begin{eqnarray*}
\|X^{k,\phi}_T -  X^{\phi}_T\|_\infty&=&\|\mathbb{X}^{k,\phi} (\cdot \wedge T^k_{e(k,T)}) - X^{\phi} (\cdot \wedge T^k_{e(k,T)}) + X^{\phi}(\cdot\wedge T^k_{e(k,T)}) - X^{\phi} (\cdot)\|_\infty\\
&\le& \|\mathbb{X}^{k,\phi}_T - X^{\phi}_T\|_\infty \\
&+& \sup_{0\le t\le T}|X^{\phi}(t\wedge T^k_{e(k,T)}) - X^{\phi}(t)|.
\end{eqnarray*}
First, we claim
\begin{equation}\label{fbmest1}
\mathbb{E}\|\mathbb{X}^{k,\phi}_T - X^{\phi}_T\|_\infty \lesssim \epsilon_k^{(2\theta \wedge H)^-},
\end{equation}
for $k\ge 1$. One can check (\ref{fbmest1}) by using (\ref{raFBM}) and repeating exactly the same steps (with the obvious modification by replacing $A^k$ by $B^k_H$) as in the proofs of Lemmas \ref{passo1mult}, \ref{passo2mult}, \ref{passo3mult} and Proposition \ref{semekT}. Then, we omit the details. We observe

\begin{eqnarray}
\nonumber\sup_{0\le t\le T}|X^{\phi}(t\wedge T^k_{e(k,T)}) - X^{\phi}(t)|&\le& C\big(1+ \|X^{\phi}_T\|_\infty\big) |T-T^k_{e(k,T)}|\mathds{1}_{\{T^k_{e(k,T)} < T\}}\\
\label{fbmest2} &+& \sigma \|B_H(\cdot \wedge T^k_{e(k,T)}) - B_H\|_\infty =: I^k_1 + I^k_2.
\end{eqnarray}
One can easily check $\sup_{u\in U^T_0}\mathbb{E}\|X^u\|^{2p}_{\infty}\le C(1+|x_0|^{2p}\exp(CT))$ for every $p\ge 1$ where $C$ is a constant which depends on $\bar{a}$ and $B_H$. We shall invoke (\ref{astk2}) in Lemma \ref{Tkrate} and apply H\"older's inequality to get

\begin{equation}\label{fbmest3}
\mathbb{E}I^k_1\lesssim \epsilon_k^{1^-},
\end{equation}
for $k\ge 1$. Now, from  Garsia-Rodemich-Rumsey's inequality (see e.g Lemma A.3.1 in \cite{nualart}), for all $\varepsilon\in (0,H)$ there exists a nonnegative random variable $G_{\varepsilon,T}$ with $\mathbb{E}|G_{\varepsilon,T}|^p < + \infty$ for all $p\ge 1$ such that

$$|B_H(t) - B_H(s)|\le G_{\varepsilon,T}|t-s|^{H-\epsilon}~a.s,$$
for all $s,t\in [0,T]$. Therefore,

\begin{equation}\label{fbmest4}
\sup_{0\le t\le T}|B_H(t) - B_H(t\wedge T^k_{e(k,T)})|\le |T - T^k_{e(k,T)}|^{H-\epsilon }G_{\varepsilon,T}\mathds{1}_{\{T^k_{e(k,T)} < T\}}.
\end{equation}
From (\ref{fbmest4}), Lemma \ref{Tkrate} and H\"older's inequality, we get

\begin{equation}\label{fbmest5}
\mathbb{E}I^k_2\lesssim  \epsilon_k^{(H-\varepsilon)^-},
\end{equation}
for $k\ge 1$.

Summing up (\ref{fbmest1}), (\ref{fbmest2}), (\ref{fbmest3}) and (\ref{fbmest5}), we conclude the proof.

\subsection{Proof of Proposition \ref{mainPrRV}}
In the sequel, we recall the processes $Z^k$, $W^k_H$ and $\mathbb{X}^{k,\phi}$ given by (\ref{discreteroughOU}), (\ref{discreterough1}) and (\ref{Seuler}), respectively. Clearly, there exists a constant $C$ which only depends on $H$ such that

\begin{eqnarray}
\nonumber\| Z_T- Z^k_T\|_{\infty}&\le& C \Bigg\{|\beta \nu|\big(\max_{n\ge 0} \Delta T^k_{n+1}\mathds{1}_{\{T^k_n\le T\}}\big) + \nu \|W^k_H  - W_H\|_{\infty}\\
\nonumber& &\\
\nonumber &+& |2\beta^2\nu| \big(\max_{n\ge 0} \Delta T^k_{n+1}\mathds{1}_{\{T^k_n\le T\}}\big) \|W^k_H\|_{\infty}e^{\beta T}\\
\label{pathwiseZ_k}& &\\
\nonumber&+& e^{\beta T}\|W^k_H - W_H\|_{\infty} + \beta \|W_H\|_{\infty}e^{\beta  T}\big(\max_{n\ge 0} \Delta T^k_{n+1}\mathds{1}_{\{T^k_n\le T\}}\big)\Bigg\}~a.s.
\end{eqnarray}
Lemma \ref{meshlemma} and Theorem 3.5 in \cite{ohashifrancys} applied to (\ref{pathwiseZ_k}) allow us to state the following result.

\begin{lemma}\label{Zkcon}
Fix $0 < H < \frac{1}{2}$ and $p\ge 1$. Then,

$$
\mathbb{E}\|Z^k_T-Z_T\|^p_\infty \lesssim
\epsilon_k^{(pH)^-}
$$
for $k\ge 1$.
\end{lemma}

For a given control $\phi \in U^{T}_0$, we set

$$\overline{\Sigma}^{k,\phi}(t):= 0 \mathds{1}_{\{t=0\}} + \sum_{n=1}^\infty \mathbb{X}^{k,\phi}(T^{k}_{n-1})\vartheta\big(Z^k(T^k_{n-1}),\phi(t)\big)\mathds{1}_{\{T^{k}_{n-1} < t \le T^{k}_n\}},
$$
for $0\le t\le T$. We define

$$
\widehat{\mathbb{X}}^{k,\phi}(t):=x_0 + \int_0^t \mathbb{X}^{k,\phi}(\bar{s}_k)\mu(\phi(s))ds + \int_0^t \overline{\Sigma}^{k,\phi}(s)dB^{1}(s),
$$
for $0\le t\le T$. One should notice that

$$
\mathbb{X}^{k,\phi}(t) =\mathbb{X}^{k,\phi}(\bar{t}_k) = \widehat{\mathbb{X}}^{k,\phi}(\bar{t}_k),~\mathbb{X}^{k,\phi}_t = \mathbb{X}^{k,\phi}_{\bar{t}_k},
$$
for every $t\in[0,T]$. The idea is to analyse

$$\mathbb{E}\|\mathbb{X}^{k,\phi} - X^{\phi}\|_\infty\lesssim \mathbb{E}\|\mathbb{X}^{k,\phi}- \widehat{\mathbb{X}}^{k,\phi}\|^2_\infty + \mathbb{E}\|\widehat{\mathbb{X}}^{k,\phi}- X^{\phi} \|_\infty. $$
By using Assumption (D1), one can follow the same arguments as described in Lemmas \ref{passo1mult}, \ref{passo2mult}, \ref{passo3mult}, Proposition \ref{semekT} and the proof of Proposition \ref{mainPrSDEBM}.

\begin{lemma}\label{bsvol1}
Under Assumption (D1), we have

$$
\sup_{k\ge 1}\sup_{\phi \in U^T_0}\mathbb{E}\| \mathbb{X}^{k,\phi}_T\|^p_\infty < \infty,
$$
for every $p\ge 1$.
\end{lemma}








The proof of Lemma \ref{bsvol1} is identical to the proof of Lemma \ref{passo1mult}, so we omit the details.

\begin{lemma}\label{p2svol}
Fix $0 < H < \frac{1}{2}$. Under Assumption (D1), we have


\begin{equation*}
\begin{split}
\mathbb{E}\|\widehat{\mathbb{X}}^{k,\phi}_T- X^{\phi}_T\|^2_\infty \lesssim \Bigg\{ \epsilon^{(2H)^-}_k  + \int_0^T\mathbb{E}\|\mathbb{X}^{k,\phi}_{s}- X^{\phi}_s\|^2_\infty ds \Bigg\},\\
\end{split}
\end{equation*}
for every $\phi \in U^{T}_0$ and $k\ge 1$.
\end{lemma}
\begin{proof}
By definition,

\begin{eqnarray*}
\widehat{\mathbb{X}}^{k,\phi}(t) - X^\phi(t) &=& \int_0^t \Big[ (\mathbb{X}^{k,\phi}(s)-X^\phi(s))\mu(\phi(s))\Big]ds\\
&+& \int_0^t \Big[\overline{\Sigma}^{k,\phi}(s)-X^\phi(s)\vartheta(Z(s),\phi(s))\Big]dB^1(s)
\end{eqnarray*}
Assumption (D1) yields the existence of a constant $C$ such that $\sup_{u \in U^T_0}\|\mu(u)\|_\infty \le C$ a.s. Therefore,

$$\mathbb{E}\sup_{0\le t\le T}\Bigg|\int_0^t \big(\mathbb{X}^{k,\phi}(s)-X^\phi(s)\big)\mu(\phi(s))ds\Bigg|^2$$
$$\le C \mathbb{E}\int_0^T \big|\big(\mathbb{X}^{k,\phi}(s)-X^\phi(s)\big)\mu(\phi(s))\big|^2ds$$
$$\le C \mathbb{E}\int_0^T \|\mathbb{X}^{k,\phi}_s-X^\phi_s\|^2_\infty ds,$$
for every $k\ge 1$. By writing

\begin{eqnarray*}
\overline{\Sigma}^{k,\phi}(s) - X^\phi(s) \vartheta \big(Z(s),\phi(s)\big)&=& \mathbb{X}^{k,\phi}(T^k_{n-1}) \Big[\vartheta(Z^k(T^k_{n-1}),\phi(s))  - \vartheta(Z(s),\phi(s))\Big]\\
&+& \big[\mathbb{X}^{k,\phi}(T^k_{n-1}) - X^\phi(s)\big]\vartheta \big(Z(s),\phi(s)\big)\\
\end{eqnarray*}
on each stochastic interval of form $T^k_{n-1} < s < T^k_n$, we can apply Assumption (D1) to get

\begin{eqnarray}
\nonumber\Big|\overline{\Sigma}^{k,\phi}(s) - X^\phi(s) \vartheta \big(Z(s),\phi(s)\big)\Big|^2&\le & C \|\mathbb{X}^{k,\phi}_T\|_\infty^2 \| Z^k_T - Z_T\|_\infty^2 \\
\label{Zkop1}&+&C \|\mathbb{X}^{k,\phi}_s - X^\phi_s\|^2_\infty,
\end{eqnarray}
on each stochastic interval of form $T^k_{n-1} < s < T^k_n$. From Lemma \ref{Zkcon}, we know that for any $p>1$, we have

\begin{equation}\label{Zkop2}
\big(\mathbb{E}\|Z^k_T-Z_T\|^{2p}_\infty\big)^{\frac{1}{p}} \lesssim
\epsilon_k^{(2H)^-},
\end{equation}
for every $k\ge 1$. By applying Burkholder-Davis-Gundy and H\"older's inequalities and using (\ref{Zkop1}), (\ref{Zkop2}) and Lemma \ref{bsvol1}, we then get

$$\mathbb{E}\sup_{0\le t\le T}\Bigg|\int_0^t \Big[\overline{\Sigma}^{k,\phi}(s)-X^\phi(s)\vartheta(Z(s),\phi(s))\Big]dB^1(s)\Bigg|^2$$
$$\le C  \mathbb{E}\int_0^T \Big|\overline{\Sigma}^{k,\phi}(s) - X^\phi(s) \vartheta \big(Z(s),\phi(s)\big)\Big|^2ds
$$
$$\lesssim \epsilon_k^{(2H)^-} + \int_0^T \mathbb{E}\| \mathbb{X}^{k,\phi}_s-X^\phi_s\|_\infty^2ds,$$
for every $k\ge 1$. All the constants in the above inequalities do not depend on $k\ge 1$ and $\phi \in U^T_0$. This concludes the proof.


\end{proof}

\begin{lemma}\label{p3svol}
Under Assumption (D1), there exists a constant $C$ such that

$$\sup_{\phi \in U^{k,\infty}_0}\mathbb{E}\|\mathbb{X}^{k,\phi}_T - \widehat{\mathbb{X}}^{k,\phi}_T\|^2_\infty \le C \epsilon_k^2, $$
for every $k\ge 1$. In case $\vartheta$ is independent from the control variable, there exists a constant $C$ such that

$$\sup_{\phi \in U^{T}_0}\mathbb{E}\|\mathbb{X}^{k,\phi}_T - \widehat{\mathbb{X}}^{k,\phi}_T\|^2_\infty \le C \epsilon_k^2, $$
for every $k\ge 1$.
\end{lemma}
The proof of Lemma \ref{p3svol} is identical to the proof of Lemma \ref{passo3mult}, so we omit the details. By combining Lemmas \ref{p2svol} and \ref{p3svol} and applying H\"older and Grownall's inequality, we get



\begin{equation}\label{rvdx1}
\sup_{\phi \in U^{k,\infty}_0}\mathbb{E}\|\mathbb{X}^{k,\phi}_T - X^{\phi}_T\|_\infty\lesssim \epsilon^{H^-}_k,
\end{equation}
for $k\ge 1$. In case $\vartheta$ does not depend on controls, we also have

\begin{equation}\label{rvdx2}
\sup_{\phi \in U^{T}_0}\mathbb{E}\|\mathbb{X}^{k,\phi}_T - X^{\phi}_T\|_\infty\lesssim \epsilon^{H^-}_k,
\end{equation}
for $k\ge 1$.

We are now able to present the proof of Proposition \ref{mainPrRV}.

\

\noindent \textbf{Proof of Proposition \ref{mainPrRV}}: The argument of the proof is identical to the proof of Proposition \ref{mainPrSDEBM}. Indeed, for each $\phi\in U^{k,\infty}_0$, we shall write
$$X^{k,\phi} - X^\phi = X^{k,\phi} - \mathbb{X}^{k,\phi} + \mathbb{X}^{k,\phi} - X^\phi.$$
We can apply (\ref{rvdx1}) to state that

$$\sup_{\phi \in U^{k,\infty}_0}\mathbb{E}\| X^{k,\phi}_T - X^\phi_T\|_\infty\lesssim \epsilon_k^{H^-} + \sup_{\phi \in U^{k,\infty}_0}\mathbb{E}\| X^{k,\phi}_T - \mathbb{X}^{k,\phi}_T\|_\infty,$$
for $k\ge 1$. For each $\phi \in U^{k,\infty}_0$, $\mathbb{X}^{k,\phi}(t) - X^{k,\phi}(t) = 0$ whenever $0\le t < T^k_{e(k,T)+1}$ and

\begin{eqnarray*}
\mathbb{X}^{k,\phi}(t) - X^{k,\phi}(t)&=& \int_{T^k_{e(k,T)}}^{\bar{t}_k} \mathbb{X}^{k,\phi}(\bar{s}_k)\mu(\phi(s))ds\\
&+& \int_{T^k_{e(k,T)}}^{\bar{t}_k}
\overline{\Sigma}^{k,\phi}(s)dB^1(s),
\end{eqnarray*}
 whenever $T^k_{e(k,T)+1}\le t \le T$. By using Assumption (D1) and repeating the same steps given in the proof of Proposition \ref{mainPrSDEBM}, one can check

$$
\sup_{\phi \in U^{k,\infty}_0}\mathbb{E}\| X^{k,\phi}_T - X^\phi_T\|_\infty \lesssim  \epsilon_k^{H^-} + \epsilon_k^{\frac{1}{2}^-} \lesssim \epsilon_k^{H^-},
$$
for $k\ge 1$. In case $\vartheta$ does not depend on controls, we use (\ref{rvdx2}) and the same argument given in the proof of Proposition \ref{mainPrSDEBM} yields

$$
\sup_{\phi \in U^{T}_0}\mathbb{E}\| X^{k,\phi}_T - X^\phi_T\|_\infty \lesssim \epsilon_k^{H^-},
$$
for $k\ge 1$. This completes the proof.

\section{Numerical example and a pseudo code}\label{numersection}

In this section, we explain how the methodology can be implemented in a concrete simple example. For this purpose, we choose the example of hedging in a two-dimensional Black-Scholes model. This is a classical problem in Finance which can be briefly described as follows. For a given $c\in \mathbb
{R}$ and a Lipschitz function $\varphi: \mathbb{R}^2\rightarrow\mathbb{R}$, we define $\varrho_c:\mathbb{R}^3\rightarrow \mathbb{R}$ by

$$\varrho_c(x,y,z):=(c+ x -\varphi(y,z))^2; (x,y,z)\in \mathbb{R}^3.$$
Let us consider
\begin{equation}\label{Seq}
\begin{split}
dP^1(t)&=P^1(t)\left(\mu_1dt+\sigma_1dB^1(t)\right)\\
dP^2(t)&=P^2(t)\left(\mu_2dt+\sigma_2dB^2(t)\right)
\end{split}
\end{equation}
where, for simplicity, we assume $[B^1,B^2]=0, \mu_1=\mu_2=0, T=1$ and the riskless rate equals zero. The problem is

$$
\text{minimize}\quad \mathbb{E}\Big[\varrho_c\big(Y^\phi(T),P^1(T), P^2(T)\big)\Big]\quad \text{over all}~\phi\in U^T_0,~c\in \mathbb{R},
$$
where

$$
Y^\phi(t)=\sum_{j=1}^2\int_0^t \phi_j(r)dP^j(r); \phi\in U^T_0, 0\le t\le T,
$$
and the controls $\phi(t) = (\phi_1(t),\phi_2(t))$ represent the absolute percentages of the securities $(P^1,P^2)$ which an investor holds at time $t\in [0,T]$. In this example, we choose $\varphi(y,z):=\max{\left(y-z,0\right)}$ and $\bar{a}=1$. It is well-known there exists a unique choice of $(c^*,\phi^*) \in \mathbb{R}\times U^T_0$ such that

$$\inf_{(c,\phi)\in \mathbb{R}\times U^T_0}\mathbb{E}\Big[\varrho_c\big(Y^\phi(T),P^1(T), P^2(T)\big)\Big] = \mathbb{E}\Big[\varrho_{c^*}\big(Y^{\phi^*}(T),P^1(T), P^2(T)\big)\Big]=0,$$
where by Margrabe's formula, we have

$$c^*=P^1_0\Phi(d_1)-P^2_0\Phi(d_2),$$
where

$$\sigma=\sqrt{\sigma_1^2+\sigma_2^2},\quad d_1=\frac{\log\left(\frac{P^1(0)}{P^2(0)}\right)+\frac{\sigma^2}{2}T}{\sigma\sqrt{T}},\quad d_2=d_1-\sigma\sqrt{T},$$
and $\Phi$ is the cumulative distribution function of the standard Gaussian variable. We recall $c^*$ is the price of the option and $\phi^*$ is the so-called delta hedging which can be computed by means of the classical PDE Black-Scholes as a function of $\Phi$. We set $P^1(0)=49, P^2(0)=52, \sigma_1=0.2$, $\sigma_2=0.3$ and $\epsilon_k = 2^{-k}$. The controlled process $X^\phi:= (P, Y^\phi)$ is the 3-dimensional SDE given by

$$dX^{\phi,i}(t) = \sum_{j=1}^2 \sigma^{ij}(t,X^\phi_t,\phi(t))dB^j(t),~i=1,2,3$$
where

  \begin{equation*}
  \sigma^{ij}(t,f_t,a):=\left\{
\begin{array}{rl}
\sigma_i f_i(t)\delta_{i=j}; & \hbox{if} \ i=1,2; j=1,2 \\
a_j \sigma_jf_j(t);& \hbox{if} \ i=3; j=1,2.
\end{array}
\right.
\end{equation*}
Here, $\delta_{i=j}$ is the delta Dirac function concentrated at $i=j$. In order to keep notation simple, we set $m=e(k,T)$. The controlled imbedded discrete structure for $X^\phi$ is given by $X^{k,u^k} = \big(P^{k}, Y^{k,u^k}\big)$, where
$$Y^{k,u^k}(\bar{t}_k) = \sum_{j=1}^2 \int_0^{\bar{t}_k} u^{k,j}(s)dP^{k,j}(s)$$
for $u^k = (u^k_0, u^k_1, \ldots, u^k_{m-1}) \in U^{k,m}_0$. Here, $P^k=(P^{k,1}, P^{k,2})$ is the Euler scheme of (\ref{Seq}). By definition,

\begin{eqnarray*}
\Delta Y^{k,u^k}(T^k_j)&=& u^{k,1}_{j-1} \Delta P^{k,1}(T^k_j) + u^{k,2}_{j-1} \Delta P^{k,2}(T^k_j)\\
&=&  u^{k,1}_{j-1}\sigma_1 P^{k,1}(T^k_{j-1})\Delta A^{k,1}(T^k_j) + u^{k,2}_{j-1} \sigma_2 P^{k,2}(T^k_{j-1}) \Delta A^{k,2}(T^k_j)
\end{eqnarray*}
for $j=m,\ldots, 1$. In the sequel, in order to keep notation simple, we write $Y^{k,u^k}_j = Y^{k,u^k}(T^k_j), P^k_j = P^k(T^k_j)$, $\Delta P^{k,i}(T^k_j) = \Delta P^{k,i}_j = P^{k,i}_j - P^{k,i}_{j-1}$ and $\Delta Y^{k,u^k}(T^k_j) = \Delta Y^{k,u^k}_j = Y^{k,u^k}_j - Y^{k,u^k}_{j-1}$, for $j=m,\ldots, 1$ and $i=1,2$. We write

\begin{eqnarray*}
H &=& \max \big\{P^{k,1}_m - P^{k,2}_m; 0\big\}\\
&=& \max \{P^{k,1}_{m-1} - P^{k,2}_{m-1} + \sigma_1 P^{k,1}_{m-1} \Delta A^{k,1}(T^k_m) - \sigma_2 P^{k,2}_{m-1} \Delta A^{k,2}(T^k_m);0  \}.
\end{eqnarray*}

Fix $c \in \mathbb{R}$. By writing the value function in terms of conditional expectation $\mathbb{E}_{j}$ w.r.t $\mathcal{F}^k_{T^k_{j}}$, for $j=m-1, \ldots, 0$, we have

\begin{eqnarray*}
V^k(T^k_{m-1},u^k)&=&\inf_{a_{m-1}\in \mathbb{A}}\mathbb{E}_{m-1}\big[\big(c+Y^{k,u^k}_{m-1} +\Delta Y^{k,a_{m-1}}_m-H\big)^2\big]\\
&=& \inf_{a_{m-1}\in \mathbb{A}}\Bigg\{\mathbb{E}_{m-1}\big[(c+Y^{k,u^k}_{m-1}-H\big)^2\big]+2a_{1,m-1}\mathbb{E}_{m-1}\big[ \big(c+Y^{k,u^k}_{m-1}-H\big)\Delta P^{k,1}_m\big]\\
&+& a^2_{1,m-1}\mathbb{E}_{m-1} \big[ \big( \Delta P^{k,1}_m \big)^2\big] +2a_{2,m-1}\mathbb{E}_{m-1}\big[ \big( c+Y^{k,u^k}_{m-1}-H\big)\Delta P^{k,2}_m\big]\\
&+& a^2_{2,m-1}\mathbb{E}_{m-1} \big[\big(\Delta P^{k,2}_m\big)^2\big] + 2a_{1,m-1}a_{2,m-1}\mathbb{E}_{m-1}\big[\Delta P^{k,2}_m\Delta P^{k,1}_m\big]  \Bigg\}.
\end{eqnarray*}

By computing the gradient $\nabla$ of $ a_{m-1}\mapsto \mathbb{E}_{m-1}\big[\big(c+Y^{k,u^k}_{m-1} +\Delta Y^{k,a_{m-1}}_m-H\big)^2\big]$, solving the system $\nabla \mathbb{E}_{m-1}\big[\big(c+Y^{k,u^k}_{m-1} +\Delta Y^{k,a_{m-1}}_m-H\big)^2\big] (a^\star_{m-1}) = (0,0)$ and using the strong Markov property $\mathbb{E}_{m-1}\big[\Delta A^{k,1}(T^k_m)\Delta A^{k,2}(T^k_m) \big] = 0$, we get

$$
a^\star_{1,m-1}= \frac{\mathbb{E}_{m-1}\big[H\Delta P^{k,1}_m\big]}{ \big(P^{k,1}_{m-1}\big)^2\sigma_1^2\epsilon^2_k},\quad  a^\star_{2,m-1}=\frac{\mathbb{E}_{m-1}\big[H\Delta P^{k,2}_m\big]}{ \big(P^{k,2}_{m-1}\big)^2\sigma_2^2\epsilon^2_k}.$$

Then, by using the optimal $a^\star_{m-1} = ( a^\star_{1,m-1}, a^\star_{2,m-1})$, the tower property of the conditional expectation, the dynamic programming equation (\ref{DPEexp}) in Theorem \ref{absrate1} and Proposition \ref{AGREGATION}, we have

$$V^k(T^k_{m-2},u^k) = \inf_{a_{m-2} \in \mathbb{A}} \mathbb{E}_{m-2}\Big[ \Big( c + Y^{k,u^k}_{m-2} + \Delta Y^{k,a_{m-2}}_{m-1} + \Delta Y^{k,a^\star_{m-1}}_m -H\Big)^2\Big].$$
In the second step, we proceed in the same way as in the first step. By iterating this procedure, we get

\begin{equation*}
\begin{split}
a^\star_{1,m-i}&=-\frac{\sum_{j=0}^{i-1}\mathbb{E}_{m-i}\left[\left(a^\star_{1,m-j}\Delta P^{k,1}_{m+1-j}+ a^\star_{2,m-j}\Delta P^{k,2}_{m+1-j}-H\right)\Delta P^{k,1}_{m-i+1} \right]}{\big(P^{k,1}_{m-i}\big)^2\sigma_1^2\epsilon^2_k}\\
&\\
a^\star_{2,m-i}&= -\frac{\sum_{j=0}^{i-1}\mathbb{E}_{m-i}\left[\left(a^\star_{1,m-j}\Delta P^{k,1}_{m+1-j}+ a^\star_{2,m-j}\Delta P^{k,2}_{m+1-j}-H\right)\Delta P^{k,2}_{m-i+1} \right] }{\big(P^{k,2}_{m-i}\big)^2\sigma_2^2\epsilon^2_k},\\
\end{split}
\end{equation*}
for $i=1, \ldots, m$, with $a^\star_{1,m}:=a^\star_{2,m}:=0$. By the dynamic programming principle (see Proposition \ref{epsiloncTH}), $u^\star:= \big( a^\star_0, a^\star_1, \ldots, a^{\star}_{m-1}\big)$ realizes

$$\inf_{\phi \in U^{k,m}_0} \mathbb{E}\Big[\varrho_c \big(Y^{k,\phi}_m, P^{k,1}_m,P^{k,2}_m\big)  \Big] = \mathbb{E}\Big[\varrho_c \big(Y^{k,u^\star}_m, P^{k,1}_m,P^{k,2}_m\big)  \Big].$$
The estimated value $c^{k,\star}$ is computed according to

\begin{equation*}
\begin{split}
c^{k,\star}\in \argmin_{c\in \mathbb{R}}\mathbb{E}\left[\varrho_c\left(Y^{k,u^\star}_m,P^{k,1}_m, P^{k,2}_m\right)\right].
\end{split}
\end{equation*}
In other words, $\mathbb{E}\left[\varrho_{c^{k,\star}}\left(Y^{k,u^\star}_m,P^{k,1}_m, P^{k,2}_m\right)\right]=0$. Table \ref{001tab} presents a comparison between the true call option price $c^\star$ and the associated Monte Carlo price $c^{k,\star}$. Figure 1 presents the Monte Carlo experiments for $c^{k,\star}$ with $k=1,2$ and $3$. The number of Monte Carlo iterations in the experiment is $3 \times 10^4$.



\begin{table}[!h]
\caption{Comparison between $c^{\star}$ and $c^{k,\star}$ for $\epsilon_k = 2^{-k}$}
\begin{tabular}{lccccc}
  \hline
  \textbf{k} & \textbf{Result} & \textbf{Mean Square Error} & \textbf{True Value} & \textbf{Difference} & \textbf{\% Error}\\
  \hline
  1 & 5.9740 & 0.01689567 & 5.821608 & 0.152458 & 0.0261\%\\
  2 & 5.8622 & 0.01158859 & 5.821608 & 0.04059157 & 0.0069\%\\
  3 & 5.7871 & 0.00821813 & 5.821608 & 0.03441365 & 0.0059\%\\
  \hline
\end{tabular}
\label{001tab}
\end{table}

\begin{figure}[!h]\label{1fig}
\center
\subfigure[ref1][]{\includegraphics[width=0.45\textwidth]{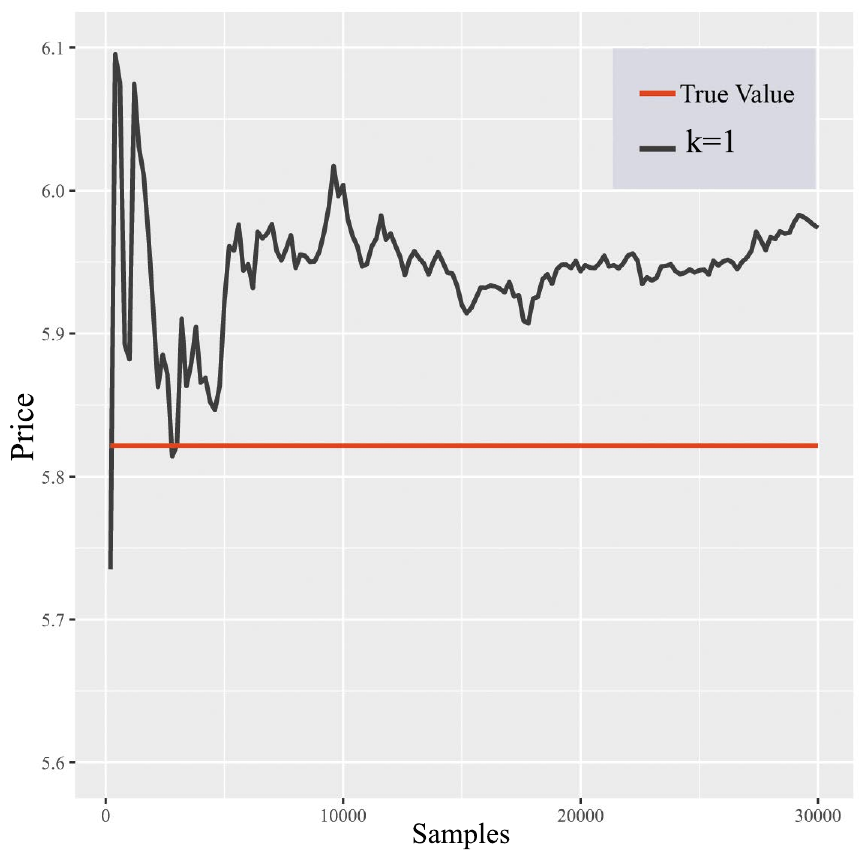}}
\qquad
\subfigure[ref2][]{\includegraphics[width=0.45\textwidth]{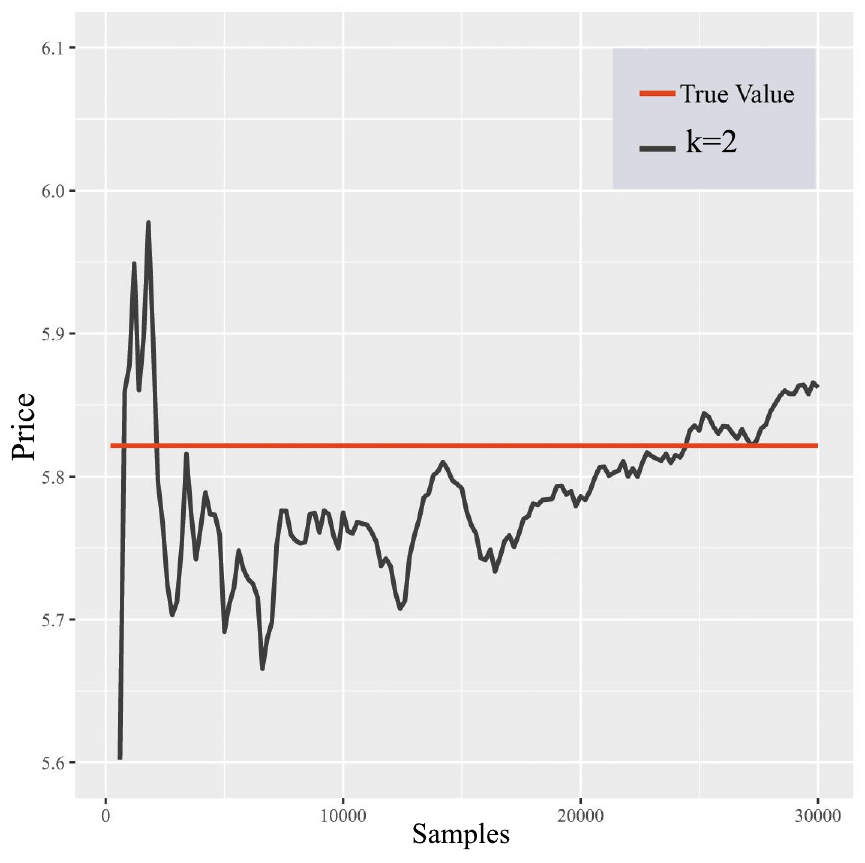}}
\qquad
\subfigure[ref3][]{\includegraphics[width=0.45\textwidth]{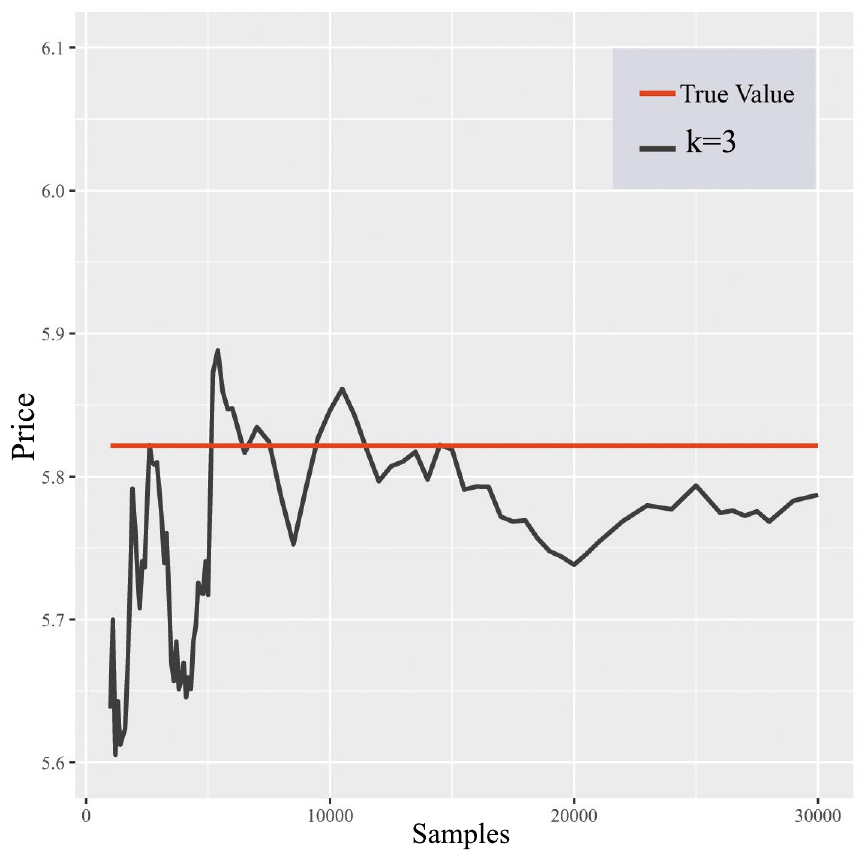}}
\caption{Monte Carlo experiments for $c^{k,\star}$ with $\epsilon_k = 2^{-k}$}
\end{figure}
\begin{algorithm}[H]
\label{alg1}
 \KwData{ Level of discretization $\epsilon_k$, number of periods $m:=\Big\lceil\frac{\epsilon^{-2}_kT}{\chi_2}\Big\rceil$, $\bar{a}>0$ and a Bernoulli-(1/2) distribution $Q$ with support in $\{-1,1\}$. }
 \KwResult{Vector of optimal control $u^{k,\star}$ given by (\ref{explicitcontrolTEXT}).}
 initialization\;
 \For{$\ell\leftarrow 1$ \KwTo $m$}{

 Generate $\Delta^{k,1}_{\ell}$ and $\Delta^{k,2}_{\ell}$ according to the algorithm described in \cite{Burq_Jones2008}.

 Compute $\Delta T^{k}_\ell=\min\{\Delta^{k,1}_\ell, \Delta^{k,2}_\ell\}$.

 \eIf{$\min\{ \Delta^{k,1}_{\ell},\Delta^{k,2}_{\ell}\}=\Delta^{k,1}_\ell$}{generate $\Delta A^{k,1}(T^k_\ell) \stackrel{d}{=} Q\epsilon_k$ and $\Delta A^{k,2}(T^k_\ell) \stackrel{d}{=} z$, where $z$ is the conditional distribution of $\Delta A^{k,2}(T^k_\ell)$ knowing that $\Delta T^k_\ell=\Delta^{k,1}_\ell$ (see Section \ref{explanation} for details) ;}{ generate $\Delta A^{k,2}(T^k_\ell) \stackrel{d}{=} Q\epsilon_k$ and $\Delta A^{k,1}(T^k_\ell) \stackrel{d}{=} z$, where $z$ is the conditional distribution of $\Delta A^{k,1}(T^k_\ell)$ knowing that $\Delta T^k_\ell = \Delta^{k,2}_\ell$;}

 Generate $\phi^k_\ell$ according to a uniform distribution on $[-\bar{a},\bar{a}]$.

 Store the information $\mathcal{T}=\{T^k_\ell, \ell=1\dots m\}$ $\Delta A^k(T^k_\ell)=(\Delta A^{k,1}(T^k_\ell),\Delta A^{k,2}(T^k_\ell))$ and $\phi^k_\ell$.
 }

 With the information set generated in previous step, calculate
  $\{X^{k,\phi^k}(T^k_\ell); \ell=1, \ldots, m\}$ as a function of $\mathcal{T}$, $\Delta A^k(T^k_\cdot)$ and $\phi^{k}$.

  Store $\mathbf{o}^{k}_\ell:=(\Delta T^k_1, \Delta A^k(T^k_1),\Delta X^{k,\phi^k}(T^k_1),\dots,\Delta T^k_{\ell},
  \Delta A^k(T^k_\ell),\Delta X^{k,\phi^k}(T^k_\ell))$ for $\ell=1,\ldots, m$.

\For{$\ell\leftarrow m$ \KwTo $1$}{
  Starting with $\mathbb{V}^k_m(\mathbf{o}^k_m):=\xi \big( \{X^{k,\phi^k}(T^k_\ell); \ell=1, \ldots, m\}\big)$, solve backwards
      \begin{equation}\label{pseudoeq1}
      C^\star_{k,\ell-1}(\mathbf{o}^k_{\ell-1})\in  \arg \min_{a^k_{\ell-1}\in \mathbb{A}} \int_{\mathbb{W}_k}\mathbb{V}^{k}_{\ell}\Big(\pi_2(\mathbf{o}^{k}_{\ell-1}), \mathfrak{X}^k_{\ell}(a^k_{\ell-1},\pi_2( \mathbf{o}^k_{\ell -1}), w^k)\Big)\nu^k(dw^k),
      \end{equation}
for $\ell=m,\ldots, 1$, where

$$\mathbb{V}^{k}_{\ell-1}(\mathbf{o}^{k}_{\ell-1})= \int_{\mathbb{W}_k}\mathbb{V}^{k}_{\ell}\Big(\pi_2(\mathbf{o}^{k}_{\ell-1}), \mathfrak{X}^k_{\ell}(C^*_{k,\ell-1}(\mathbf{o}^{k}_{\ell-1}),\pi_2( \mathbf{o}^k_{\ell -1}), w^k)\Big)\nu^k(dw^k),$$
for $\ell=m, \ldots, 1$.

Store the information $C^\star_{k,\ell-1}(\mathbf{o}^k_{\ell-1})$ and $\mathbb{V}^k_{\ell-1}(\mathbf{o}^k_{\ell-1})$.
 }
\caption{Pseudocode for a near optimal control designed by (\ref{DPP})}
\end{algorithm}
\vspace{3mm}


\begin{appendix}
\section{Large Deviation results}\label{LDappendix}

In this section, we presents some technical results related to the convergence of the algorithm at the level of the Brownian motion discretization scheme. Recall that $\Delta T^k_n =\min_{j\in\{1,2,\dots, d\}}{\{\Delta^{k,j}_n\}}$ a.s. Here, for each $1\le j\le d$ and $k\ge 1$, $\Delta^{k,j}_n \stackrel{d}{=} \epsilon_k^2 \tau^j_n$ for an iid sequence of random variables $(\tau^j_n)_{n\ge 1}$ with distribution equals to $\inf \{t>0; |W^j(t)|=1\}$ for a standard $d$-dimensional Brownian motion $(W^j)_{j=1}^d$. We observe there exists $\lambda >0$ (see Lemma 2 in \cite{Burq_Jones2008}) such that

\begin{equation}\label{tildepsifunc}
\varphi_d(\lambda):=\mathbb{E}\exp(\lambda \min\{\tau^1_1,\ldots,\tau_1^d\})\le \varphi_1(\lambda):=\mathbb{E}\exp(\lambda \tau^1_1) < \infty.
\end{equation}

Let $\psi_d(\lambda): = \text{ln}~\varphi_d(\lambda)$ defined on $\Lambda:=\{\lambda \in \mathbb{R}; \varphi_d(\lambda) < \infty\}$. We extend $\psi_d(\lambda):=+\infty$ for $\lambda \notin\Lambda$. The Cramer transform is defined by $I_d(a):=\sup_{\lambda \in \mathbb{R}}[\lambda a - \psi_d(\lambda)]$.


At first, we recall the following elementary inequality (see the Supplementary material of \cite{LEAO_OHASHI2017.1}).
\begin{lemma}\label{l.estimativas}
Let $Z_1,\ldots,Z_n$ be an i.i.d. sequence of absolutely continuous positive random variables. Then,
for every $\beta\in(0,1)$ and $r\ge 1$, we have
$$\mathbb{E}\left[\left(\max_{1\le i\le n} Z_i\right)^r\right] \leq  \Big(\mathbb{E}[Z_1^{r/(1-\beta)}]\Big)^{(1-\beta)}n^{1-\beta}.$$
\end{lemma}
In this section, $C$ is a constant which may differ from line to line.
\begin{lemma}\label{meshlemma}
For every $q\ge 1$ and $\beta\in (0,1)$, there exists a constant $C$ which depends on $q\ge 1$ and $\beta$ such that

$$
\mathbb{E}|\max_{n\ge 0}\Delta T^{k}_{n+1}|^q 1\!\!1_{\{T^k_n\le T\}} \le C \epsilon_k^{2q - 2(1-\beta)}
$$
for every $k\ge 1$.

\end{lemma}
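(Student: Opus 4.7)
The plan is to combine a Jensen / $\ell^r$-interpolation trick that converts the maximum into a sum, a Wald-type computation for the resulting renewal sum, and a renewal/overshoot bound controlling $\mathbb{E}\#\{n:T^k_n\le T\}$ by $\lceil\epsilon_k^{-2}T/\chi_d\rceil$.

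First I would rescale. By Brownian scaling and the strong Markov property at the times $T^k_n$, the waiting times $\{\Delta T^k_n/\epsilon_k^2\}_{n\ge 1}$ are i.i.d., with common law $\tau^*:=\min\{\tau^1,\ldots,\tau^d\}$ as in (\ref{kappaDEF}), and $\mathbb{E}(\tau^*)^p\le \mathbb{E}(\tau^1)^p<\infty$ for every $p\ge 1$ in view of the density estimate recalled in Remark \ref{growthlemma}.

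Next I fix $\beta\in(0,1)$ and set $r:=q/(1-\beta)>q$. From the elementary bound $\max_n y_n\le (\sum_n y_n^r)^{1/r}$ for nonnegative reals, together with Jensen's inequality applied to the concave function $x\mapsto x^{q/r}=x^{1-\beta}$, I obtain
\begin{equation*}
\mathbb{E}\Big|\bigvee_{n\ge 1}\Delta T^k_n\mathds{1}_{\{T^k_n\le T\}}\Big|^q \le \Big(\mathbb{E}\sum_{n\ge 1}(\Delta T^k_n)^r\mathds{1}_{\{T^k_n\le T\}}\Big)^{1-\beta}.
\end{equation*}
Using the inclusion $\{T^k_n\le T\}\subset\{T^k_{n-1}\le T\}\in \mathcal{F}^k_{T^k_{n-1}}$ together with the independence of $\Delta T^k_n$ from $\mathcal{F}^k_{T^k_{n-1}}$ and Fubini gives
\begin{equation*}
\mathbb{E}\sum_{n\ge 1}(\Delta T^k_n)^r\mathds{1}_{\{T^k_n\le T\}}\le \epsilon_k^{2r}\mathbb{E}(\tau^*)^r\cdot \mathbb{E}(N_T+1),
\end{equation*}
where $N_T:=\max\{n:T^k_n\le T\}$.

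The main technical step will be the bound $\mathbb{E}(N_T+1)\le C\lceil\epsilon_k^{-2}T/\chi_d\rceil$ with $C$ independent of $k$. Since $N_T+1$ is a stopping time for the filtration generated by $(\Delta T^k_n)_{n\ge 1}$, Wald's identity gives $\mathbb{E}T^k_{N_T+1}=\epsilon_k^2\chi_d\,\mathbb{E}(N_T+1)$; on the other hand Lorden's overshoot inequality yields $\mathbb{E}T^k_{N_T+1}\le T+\mathbb{E}(\Delta T^k_1)^2/\mathbb{E}\Delta T^k_1=T+\epsilon_k^2\mathbb{E}(\tau^*)^2/\chi_d$, which divided by $\epsilon_k^2\chi_d$ produces the desired bound. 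Combining with the previous display and raising to the power $1-\beta$ gives the stated rate, with constant $C=C(q,\beta)$ absorbing $\mathbb{E}(\tau^*)^r$ and the Lorden constant. The principal obstacle is the finite-mean prerequisite for Wald, which requires a short preliminary Cramér/Chernoff estimate on $\mathbb{P}(T^k_n\le T)$ that is uniform in $k$; this is precisely where the exponential integrability of $\tau^*$ is used.
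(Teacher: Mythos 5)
Your proof is correct and follows essentially the same route as the paper's: the core steps — the $\ell^\infty\le\ell^{q/(1-\beta)}$ interpolation combined with Jensen, the i.i.d. scaling $\Delta T^k_n\stackrel{d}{=}\epsilon_k^2\min\{\tau^1,\ldots,\tau^d\}$, and the moment computation $(\mathbb{E}[(\Delta T^k_1)^{q/(1-\beta)}])^{1-\beta}=C\epsilon_k^{2q}$ — are exactly what the paper displays before delegating the rest to Lemma 2.2 and a supplementary counting lemma of \cite{LEAO_OHASHI2017.1}. The only substantive difference is that you make the renewal-counting bound $\mathbb{E}(N_T+1)\le C\lceil\epsilon_k^{-2}T/\chi_d\rceil$ self-contained via Wald's identity and Lorden's overshoot inequality (correctly using $\{T^k_n\le T\}\subset\{T^k_{n-1}\le T\}$ to decouple the indicator from $\Delta T^k_n$), whereas the paper cites an external lemma for this step.
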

\begin{proof}

The proof is almost identical to Lemma 2.2 in \cite{LEAO_OHASHI2017.1}. For sake of completeness, we give the details. Let $N^k(t)=\max\{n; T^k_n\le t\}; 0\le t\le T$. We split
\begin{eqnarray*}
\max_{0\le n \le  N^{k}(T)}\Delta T^{k}_{n+1} &=&\Big(\max_{0\le n \le  N^{k}(T)}\Delta T^{k}_{n+1}\Big)1\!\!1_{\{N^{k}(T) < 2 e(k,T)\}}\\
&+& \Big(\max_{0\le n \le  N^{k}(T)}\Delta T^{k}_{n+1} \Big)1\!\!1_{\{N^{k}(T) \ge 2e(k,T)\}}.
\end{eqnarray*}
We observe $\Big(\max_{0\le n \le  N^{k}(T)}\Delta T^{k}_{n+1} \Big)1\!\!1_{\{N^{k}(T) \ge 2e(k,T)\}}\le \max\{T; \overline{T}^+_k-\overline{T}_k \} 1\!\!1_{\{N^{k}(T) \ge 2e(k,T)\}}$, where we use the notation (\ref{localt}) and we set $\overline{T}^+_k:= \min\{T^k_n; T < T^k_n\}$. Fix $q\ge 1$. By H\"older's inequality, if $p_1,p_2>1$ are conjugate numbers, we have
\begin{eqnarray*}
\mathbb{E}\Big|\max_{0\le n \le  N^{k}(T)}\Delta T^k_{n+1}\Big|^q &\le& C \mathbb{E}\Big|\max_{0\le n \le  2e(k,T)}\Delta T^k_{n+1}\Big|^q\\
 &+& C \big(\mathbb{E}\big| \max\{T; \overline{T}^+_k-\overline{T}_k \} \big|^{p_1q}\big)^{\frac{1}{p_1}}\big(\mathbb{P}\{N^{k}(T)\ge 2 e(k,T)\}\big)^{\frac{1}{p_2}}\\
& &\\
&\le& C \mathbb{E}\Big|\max_{0\le n \le  2e(k,T)}\Delta T^k_{n+1}\Big|^q  + C \big(\mathbb{P}\{N^{k}(T)\ge 2 e(k,T)\}\big)^{\frac{1}{p_2}}\\
&=:&I^{k}_1 + I^{k}_2,
\end{eqnarray*}
where we recall $e(k,T) = \big\lceil \frac{\epsilon^{-2}_k T}{\chi_d}\big\rceil$. Clearly,

$$\mathbb{P}\{N^{k}(T)\ge 2 e(k,T)\} = \mathbb{P}\Big\{T^{k}_{2 e(k,T)} \le T\Big\}.$$
We shall write

$$
T^k_{n} \stackrel{d}{=} \epsilon^2_k \sum_{\ell=1}^n \alpha_\ell,
$$
where $\alpha_\ell \stackrel{d}{=} \min\{\tau^1_\ell,\ldots, \tau^d_\ell\}$. Then, $(\alpha_\ell)_{\ell=1}^\infty$ is an iid sequence wit mean $\chi_d$. Then, the standard large deviation theory yields

\begin{eqnarray*}
\mathbb{P}\{T^{k}_{2 e(k,T)} \le T\} &=& \mathbb{P}\Big\{ \frac{2 e(k,T)}{2e(k,T)} \epsilon^2_k \sum_{\ell=1}^{2e(k,T)} \alpha_\ell \le T \Big\}\\
 & &\\
 &=& \mathbb{P}\Big\{ \frac{1}{2e(k,T)} \sum_{\ell=1}^{2e(k,T)} \alpha_\ell \le \frac{T}{2e(k,T) \epsilon_k^2} \Big\}\\
  & &\\
&\le& \mathbb{P}\Big\{ \frac{1}{2e(k,T)} \sum_{\ell=1}^{2e(k,T)} \alpha_\ell \le \frac{\chi_d}{2} \Big\}\\
& &\\
&\le& \exp\Big(-2e(k,T)I_d\Big(\frac{\chi_d}{2}\Big)\Big).
\end{eqnarray*}
Let us fix $\beta \in (0,1)$. Take $n=2e(k,T)$ in Lemma \ref{l.estimativas} and notice that
$$\left(\mathbb{E}[(\Delta T^{k}_1)^{q/(1-\beta)}]\right)^{1-\beta} = \epsilon_k^{2q}\left(\mathbb{E}[\min\{\tau^1_1, \ldots, \tau^d_1\}^{q/(1-\beta)}]\right)^{1-\beta}=: C\epsilon_k^{2q},$$
where $C$ is a constant depending on $\beta$ and $q$. Therefore, by applying Lemma \ref{l.estimativas}, we have
$$I^{k}_1\le C \epsilon^{2q}_k e(k,T)^{(1-\beta)}; k\ge 1.$$
Finally, we notice $e(k,T)^{1-\beta}\lesssim (1 + \epsilon_k^{-2(1-\beta)})$ for every $k\ge 1$. This concludes the proof.
\end{proof}

\begin{lemma}\label{Tkrate}
For each $t\in (0,T]$ and $0 < \delta < t$, we have
$$
\lim_{k\rightarrow+\infty}T^k_{e(k,t)}=t~\text{a.s}
$$
and
$$
\mathbb{P}\Big\{ T^k_{e(k,t)} < t-\delta \Big\} \le  \exp \Big[-e(k,t) I_d \Big( \chi_d \Big(1-\frac{\delta}{t} \Big)\Big)\Big],
$$
for every $k\ge 1$. For every $\eta \in (0,1)$, there exists a constant $C$ which depends on $\chi_d$ and $\eta$ such that

\begin{equation}\label{astk1}
\mathbb{P}\{T^k_{e(k,T)} < T-\epsilon_k^\eta\}\le C \exp \big( -\epsilon_k^{2(\eta-1)}\big),
\end{equation}
for every $k\ge 1$ sufficiently large. For every $\eta \in (0,1)$ and $\theta >0$, there exists a constant which depends on $T$, $\eta$ and $\theta$ such that

\begin{equation}\label{astk2}
\int_{\{T^k_{e(k,T)} < T\}}|T-T^k_{e(k,T)}|^\theta d\mathbb{P}\le C \epsilon_k^{\eta \theta},
\end{equation}
for every $k\ge 1$ sufficiently large.


\end{lemma}
\begin{proof}

We shall write $T^k_{n} \stackrel{d}{=} \epsilon^2_k \sum_{\ell=1}^n \alpha_\ell,$ where $\alpha_\ell \stackrel{d}{=} \min\{\tau^1_\ell,\ldots, \tau^d_\ell\}$. Then, $(\alpha_\ell)_{\ell=1}^\infty$ is an iid sequence wit mean $\chi_d = \mathbb{E}[\min_{1\le i\le d}\tau^i_1]$. The classical large deviation principle yields
\begin{eqnarray*}
\mathbb{P}\{T^{k}_{e(k,t)} \le t-\delta\} &=& \mathbb{P}\Big\{ \frac{e(k,t)}{e(k,t)} \epsilon^2_k \sum_{\ell=1}^{e(k,t)} \alpha_\ell \le t-\delta \Big\}\\
 & &\\
 &=& \mathbb{P}\Big\{ \frac{1}{e(k,t)} \sum_{\ell=1}^{e(k,t)} \alpha_\ell \le \frac{t-\delta}{e(k,t) \epsilon_k^2} \Big\}\\
  & &\\
&\le& \mathbb{P}\Big\{ \frac{1}{e(k,t)} \sum_{\ell=1}^{e(k,t)} \alpha_\ell \le \chi_d\Big(1-\frac{\delta}{t}\Big) \Big\}\\
& &\\
&\le& \exp\Big(-e(k,t)I_d \Big(\chi_d\Big(1-\frac{\delta}{t}\Big)\Big)\Big).
\end{eqnarray*}
Next, we take a sequence $\eta_k\downarrow 0$ such that

$$\eta^2_k \epsilon_k^{-2}\uparrow +\infty,\quad \eta^3_k\epsilon_k^{-2} = O(1)$$
as $k\rightarrow +\infty$.

In the sequel, $I_d^{(i)}$ denotes the $i$th-derivative of $I_d$.
Let $\mathcal{D}_{I_d} =\{x \in \mathbb{R};  I_d(x) < \infty\}$. It is known that (see Lemma I.14 in \cite{frank})

\begin{itemize}
  \item $I_d(\chi_d)=0$ and $\chi_d$ is a minimal of the convex function $I_d$ .
  \item $I_d \ge 0$ and continuous (strictly convex)
  \item $x \mapsto I_d(x)$ is non-increasing for $x < \chi_d$,  $x \mapsto I_d(x)$ is non-decreasing for $x > \chi_d$.
  \item $I^{(2)}_d(\chi_d) = \frac{1}{\sigma^2_d}$, where $\sigma^2_d$ is the variance of $\min_{1\le i\le d}\tau^i_1$.
   \item $I_d \in C^\infty (\text{int} (\mathcal{D}_{I_d}))$.
\end{itemize}
Since $\min_{1\le i\le d}\tau^i_1$ is an absolutely continuous random variable, we know (see e.g Proposition 1.9 in \cite{vares}) that $\chi_d \in \text{int}(\mathcal{D}_{I_d})$ and hence $I^{(1)}_d(\chi_d)=0$. Taylor expansion

$$I_d\Big(\chi_d \big(1-\frac{\eta_k}{T}\big)\Big) - I_d(\chi_d) = I^{(1)}_d(\chi_d)\Big(\frac{-\chi_d\eta_k}{T}\Big) + \frac{I^{(2)}_d(\chi_d)}{2}\Big(\frac{\chi_d\eta_k}{T}\Big)^2 + R_2\Big( \chi_d \Big(1-\frac{\eta_k}{T}\Big)\Big) $$
Because  $I_d \in C^\infty (\text{int} (\mathcal{D}_{I_d}))$, $\chi_d \in \text{int}(\mathcal{D}_{I_d})$ and $\min_{1\le i\le d}\tau^i_1$ is an absolutely continuous random variable, then there exists an open neighborhood $V_{\chi_d}$ of $\chi_d$ and $M < \infty$ such that

$$\sup_{x \in V_{\chi_d}}\big| I^{(3)}_d(x)\big| \le M < \infty.$$
There exists $k_0 = k_0(\chi_d)$ such that $\Big(\chi_d (1-\frac{\chi_d\eta_k}{T}\big)\Big) \in V_{\chi_d}$ for every $k\ge k_0$. Standard estimates on the remainder of the Taylor expansion yields

$$|R_2(x)|\le M \frac{|x-\chi_d|^{3}}{3!}, $$
for every $x\in V_{\chi_d}$.
Then,


\begin{eqnarray*}
\mathbb{P}\{T^{k}_{e(k,T)} \le T-\eta_k\}&\le& \exp\Big(-e(k,T)I_d \Big(\chi_d\Big(1-\frac{\eta_k}{T}\Big)\Big)\Big)\\
&\lesssim& \exp\big(M\eta_k^3 \epsilon^{-2}_k\big)\exp\big( -\epsilon^{-2}_k\eta^2_k \big)\rightarrow 0,
\end{eqnarray*}
as $k\rightarrow +\infty$. As a consequence, whenever $0 < \eta < \frac{2}{3}\le \gamma < 1$, we obtain

$$\mathbb{P}\{T^k_{e(k,T)} < T-\epsilon_k^\eta\}\le \mathbb{P}\{T^k_{e(k,T)} < T-\epsilon_k^\gamma\}\le \exp(M\epsilon_{k_0}^{3\gamma-2})\exp\big(-\epsilon_k^{2(\gamma-1)}\big),$$
for every $k\ge k_0$. Now, we observe for a given $0 < \eta < 1$, we can choose $\beta (\eta)$ such that $\frac{2}{3}\le \eta + \beta(\eta) < 1$ and obviously $\eta \le \eta + \beta(\eta) < 1$. This shows (\ref{astk1}). Therefore, for $\theta>0$ and $\eta \in (0,1)$, we have

\begin{eqnarray*}
\int_{\{T^k_{e(k,T)} < T\}}|T-T^k_{e(k,T)}|^\theta d\mathbb{P} &=& \int_{\{T^k_{e(k,T)} < T, |T-T^k_{e(k,T)}|> \epsilon^\eta_k \}}|T-T^k_{e(k,T)}|^\theta d\mathbb{P}\\
&+& \int_{\{T^k_{e(k,T)} < T, |T-T^k_{e(k,T)}|\le \epsilon_k^\eta \}}|T-T^k_{e(k,T)}|^\theta d\mathbb{P}\\
&\le& \epsilon_k^{\theta \eta} + T^\theta \mathbb{P} \big\{ T^k_{e(k,T)} < T-\epsilon_k^{\eta} \big\}\\
&\le& \epsilon^{\theta \eta}_k + C\exp\big( -\epsilon^{2(\eta-1)}_k \big),
\end{eqnarray*}
for every $k\ge 1$ sufficiently large. This concludes (\ref{astk2}).
\end{proof}


\section{Construction of optimal controls: Measurable selection and $\epsilon$-controls}\label{MSTappendix}

In this section, we provide the proofs of the technical details associated with our dynamic programming algorithm described in Section \ref{mainrsection}. In the sequel, it will be important to deal with universally measurable sets. For readers who are not familiar with this class of sets, we refer to e.g \cite{bertsekas}. If $(R , \mathcal{B}(R))$ is a Borel space, let $P(R)$ be the space of all probability measures defined on the Borel $\sigma$-algebra $\mathcal{B}(R)$ generated by $R$. The universal $\sigma$-algebra $\mathcal{E}(R)$ is defined by

$$\mathcal{E}(R):=\bigcap_{p\in P(R)}\mathcal{B}(R,p),$$
where $\mathcal{B}(R,p)$ is the $p$-completion of $\mathcal{B}(R)$ w.r.t $p\in P(R)$. In order to keep notation simple, for a given $k\ge 1$, we denote $m=e(k,T)$.

\begin{lemma} \label{comp_fund} Let $G^{k}_j: \mathbb{H}^{j}_k \rightarrow E$ be a universally measurable function, where $(E , \mathcal{B}(E))$ is a Borel space and let $\Xi^{k,u^k}_j$ be the map given by (\ref{Xioperator}), for $u^k\in U^{k,m}_0$. Then, the composition $G^k_j \circ \Xi^{k,u^k}_j $ is $\mathcal{F}^k_{T^k_j}$-mensurable, for every $u^k\in U^{k,m}_0$ and $j=1, \ldots ,m$.
\end{lemma}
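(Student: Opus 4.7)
The plan is to factor $\Xi^{k,u^k}_j$ through the Borel-measurable skeleton variable $\mathcal{A}^k_j$ and then use the fact that $\mathcal{F}^k_{T^k_j}$ is a $\mathbb{P}$-completed $\sigma$-algebra in order to absorb the null sets that universal measurability of $G^k_j$ produces. First, for each $i\in\{1,\ldots,j\}$ the jump $u^k_{i-1}$ is $\mathcal{F}^k_{T^k_{i-1}}$-measurable, and since $\mathcal{F}^k_{T^k_{i-1}}$ is the $\mathbb{P}$-augmentation of $\widetilde{\mathcal{F}}^k_{T^k_{i-1}}=(\mathcal{A}^k_{i-1})^{-1}(\mathcal{B}(\mathbb{S}_k^{i-1}))$, there exist a $\widetilde{\mathcal{F}}^k_{T^k_{i-1}}$-measurable modification $\tilde u^k_{i-1}=u^k_{i-1}$ a.s.\ and a Borel map $f_{i-1}:\mathbb{S}_k^{i-1}\to\mathbb{A}$ with $\tilde u^k_{i-1}=f_{i-1}(\mathcal{A}^k_{i-1})$ (with the convention $f_0\equiv u^k_0$, deterministic or Borel in $\mathcal{F}^k_0$).

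Assembling these $f_{i-1}$ together with the coordinate projections on $\mathbb{S}_k^j$, I obtain a Borel map $\Psi_j:\mathbb{S}_k^j\to\mathbb{H}^{k,j}$ such that
\[
\Xi^{k,u^k}_j \;=\; \Psi_j(\mathcal{A}^k_j) \qquad \mathbb{P}\text{-a.s.}
\]
Let $\mu^k_j$ denote the push-forward $\mathbb{P}\circ(\mathcal{A}^k_j)^{-1}$ on $\mathcal{B}(\mathbb{S}_k^j)$ and let $\nu^k_j$ denote the push-forward $\mathbb{P}\circ(\Xi^{k,u^k}_j)^{-1}$ on $\mathcal{B}(\mathbb{H}^{k,j})$. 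Because $G^k_j$ is universally measurable, for every Borel set $B\in\mathcal{B}(E)$ there is a Borel set $B'\in\mathcal{B}(\mathbb{H}^{k,j})$ with $(G^k_j)^{-1}(B)\,\triangle\, B' \subset N$, where $N\in\mathcal{B}(\mathbb{H}^{k,j})$ is $\nu^k_j$-null; equivalently, $\Psi_j^{-1}(N)$ is $\mu^k_j$-null, so $(\mathcal{A}^k_j)^{-1}(\Psi_j^{-1}(N))$ is a $\mathbb{P}$-null set.

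Now I compute
\[
\bigl(G^k_j\circ \Xi^{k,u^k}_j\bigr)^{-1}(B)\;=\;(\mathcal{A}^k_j)^{-1}\bigl(\Psi_j^{-1}((G^k_j)^{-1}(B))\bigr)
\]
$\mathbb{P}$-a.s., which differs from the Borel set $(\mathcal{A}^k_j)^{-1}(\Psi_j^{-1}(B'))\in\widetilde{\mathcal{F}}^k_{T^k_j}$ by a subset of the $\mathbb{P}$-null set $(\mathcal{A}^k_j)^{-1}(\Psi_j^{-1}(N))$. Since $\mathcal{F}^k_{T^k_j}$ is the usual $\mathbb{P}$-augmentation and therefore contains $\widetilde{\mathcal{F}}^k_{T^k_j}$ together with every $\mathbb{P}$-null set of $\mathcal{F}^k_\infty$, the preimage $(G^k_j\circ\Xi^{k,u^k}_j)^{-1}(B)$ belongs to $\mathcal{F}^k_{T^k_j}$. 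This establishes $\mathcal{F}^k_{T^k_j}$-measurability of the composition for all $u^k\in U^{k,e(k,T)}_0$ and $j=1,\ldots,m$.

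The only subtle point, and hence the main obstacle, is the step from universal measurability of $G^k_j$ to measurability of the composition: this works precisely because the relevant push-forward measure that governs the universal completion is $\nu^k_j$ (or equivalently $\mu^k_j$ pulled through $\Psi_j$), and the $\mathbb{P}$-completeness of $\mathcal{F}^k_{T^k_j}$ is tailored to swallow exactly the null sets that this push-forward produces when one pulls back along $\Xi^{k,u^k}_j$.
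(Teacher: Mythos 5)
Your proposal is correct and follows essentially the same route as the paper's proof: both push $\mathbb{P}$ forward along $\Xi^{k,u^k}_j$ to a law on $\mathbb{H}^{k,j}$, use universal measurability of $G^k_j$ with respect to that specific law to replace $(G^k_j)^{-1}(B)$ by a Borel set up to a null set, and then let the $\mathbb{P}$-augmentation defining $\mathcal{F}^k_{T^k_j}$ absorb the null discrepancy. Your explicit Borel factorization $\Xi^{k,u^k}_j=\Psi_j(\mathcal{A}^k_j)$ a.s.\ is a slightly more constructive way of landing in $\widetilde{\mathcal{F}}^k_{T^k_j}$ than the paper's choice of a set $W\in\widetilde{\mathcal{F}}^k_{T^k_j}$, but it is not a genuinely different argument.
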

\begin{proof} We need to show that given $D \in \mathcal{B} (E)$, the inverse image $(\Xi^{k,u^k}_j )^{-1} [(G^{k}_j)^{-1} (D)] \in \mathcal{F}^k_{T^k_{j}}$. Since $(G^{k}_j)^{-1} (D)$ is a universally measurable set, it is sufficient to check that $(\Xi^{k,u^k}_j )^{-1} (H) \in \mathcal{F}^k_{T^k_j}$ for every universally measurable set $H$ in $\mathbb{H}^{j}_k$. Let $\mu$ be a probability measure on $\mathbb{H}^{j}_k$ given by

\[
\mu (C) = \mathbb{P} \left[ (\Xi^{k,u^k}_j )^{-1} (C) \right], ~ ~ C \in \mathcal{B} (\mathbb{H}^{j}_k).
\]

For a given universally measurable set $H$ in $\mathbb{H}^{j}_k$, we can select (see e.g Lemma 7.26 in \cite{bertsekas}) $\tilde{H} \in \mathcal{B} (\mathbb{H}^{j}_k)$ such that

\[
\mathbb{P} \left[ (\Xi^{k,u^k}_j )^{-1} (\tilde{H}) \bigtriangleup (\Xi^{k,u^k}_j )^{-1} (H)\right] = \mu \big( \tilde{H} \bigtriangleup H\big)= 0.
\]
The set $(\Xi^{k,u^k}_j )^{-1} (\tilde{H}) \in \mathcal{F}^k_{T^k_j}$, so there exists $W \in \widetilde{\mathcal{F}}^k_{T^k_j}$ such that $\mathbb{P} \left[W \bigtriangleup (\Xi^{k,u^k}_j )^{-1} (\tilde{H}) \right]=0$. Then, $\mathbb{P} \left[W \bigtriangleup (\Xi^{k,u^k}_j )^{-1} (H) \right]=0$ and hence, $(\Xi^{k,u^k}_j )^{-1} (H) \in \mathcal{F}^k_{T^k_j}$. This concludes the proof.

\end{proof}

Let us now present a selection measurable theorem which will allow us to \textit{aggregate} the map $\phi^k\mapsto V^k(\cdot,\phi^k)$ into a list of upper semi-analytic functions $\mathbb{V}^k_j:\mathbb{H}^{j}_k\rightarrow \mathbb{R}; j=0, \ldots, m$. More importantly, we will construct an $\epsilon$-optimal control at the level of the optimization problem (\ref{discreteOCPROBLEM}). Recall that a structure of the form (\ref{controlledstate}) is fixed and it is equipped with a Borel function $\gamma_m:\mathbb{H}^{m}_k\rightarrow \mathbf{D}_{n,T}$ realizing (\ref{compoID}) with a given initial condition $x_0\in \mathbb{R}^n$. For such structure, we write $V^k$ as the associated value process given by (\ref{discretevalueprocess}). We start with the map $\mathbb{V}^{k}_m:\mathbb{H}^{m}_k\rightarrow\mathbb{R}$ defined by

$$\mathbb{V}^{k}_m(\textbf{o}^{k}_m):=\xi(\gamma_m(\textbf{o}^{k}_m)); \textbf{o}^{k}_m\in \mathbb{H}^{m}_k.$$
By construction, $\mathbb{V}^{k}_m$ is a Borel function. We recall $(\Delta T^k_1, \Delta A^k(T^k_1 ))\stackrel{d}{=}(\Delta T^k_j, \Delta A^k(T^k_j ))$, for $j=1, \ldots, m$.

\begin{lemma}\label{conditionalexprep}
Assume that $\xi_{X^k}(u^k)\in L^1(\mathbb{P})$ for $u^k \in U^{k,m}_0$. Then,

\begin{equation}\label{1oiter_Geral}
 \mathbb{E}\left[ \xi_{X^k}(u^k)|\mathcal{F}^k_{T^k_{m-1}}\right] = \int_{\mathbb{W}_k} \mathbb{V}^{k}_{m}\big(\Xi^{k,u^k}_{m-1}, \mathfrak{X}^k_{m}(u^k_{m-1}, \Xi^{k,u^k}_{m-1}, w^k)\big) \nu^k(dw^k)  ~a.s.
\end{equation} where $\nu^k$ is the law of the $\mathbb{W}_k$-valued random vector $(\Delta T^k_1, \Delta A^k(T^k_1 ))$.
\end{lemma}

\begin{proof}
At first, we recall that $\widetilde{\mathbb{F}}^k=\{\widetilde{\mathcal{F}}^k_t; 0\le t < \infty\}$ is the raw filtration generated by $A^k$ given in section \ref{discretesec}. Let $\widetilde{U}^{k,m}_0$ be the subset of controls $v^k=(v^k_0, \ldots, v^k_{m-1})$ in $U^{k,m}_0$ such that $v^k_j \in \widetilde{\mathcal{F}}^k_{T^k_j}$ for $j=0,\ldots, m-1$. At first, we observe
\begin{equation} \label{iqual_1}
\mathbb{E}\left[ \xi_{X^k}(u^k)\big|\mathcal{F}^k_{T^k_{m-1}}\right] = \mathbb{E}\left[ \xi_{X^k}(u^k)\big|\widetilde{\mathcal{F}}^k_{T^k_{m-1}}\right]\quad a.s\end{equation}
for every $u^k = (u^k_0, \ldots, u^k_{m-1})\in U^{k,m}_0$.  Elementary computation yields

$$
\mathbb{E}\left[ \xi_{X^k}(g^k)\big|\widetilde{\mathcal{F}}^k_{T^k_{m-1}}\right] =
\int_{\mathbb{W}_k} \mathbb{V}^{k}_{m}\big(\Xi^{k,g^k}_{m-1}, \mathfrak{X}^k_{m}(g^k_{m-1}, \Xi^{k,g^k}_{m-1}, w^k)\big) \nu^k(dw^k)
$$
for every control $g^k = (g^k_0, \ldots, g^k_{m-1})\in \widetilde{U}^{k,m}_0$.  Now, we recall for a given $u^k = (u^k_0, \ldots, u^k_{m-1})\in U^{k,m}_0$, there exists $g^k = (g^k_0, \ldots, g^k_{m-1}) \in \widetilde{U}^{k,m}_0$ which fulfills the following: for each $i$, there exists a set $\tilde{G}^k_i \in \widetilde{\mathcal{F}}^k_{T^k_{i}}$ of full probability such that $u^k_i (\omega) = g^k_i(\omega);\omega \in \tilde{G}^k_i$ for $i=0, \ldots , m-1$. Let us denote $\tilde{G}^k = \cap_{i=0}^{m-1} \widetilde{G}^k_i \in \widetilde{\mathcal{F}}^k_{T^k_{m-1}}$. Then, $u^k=g^k$ in $\tilde{G}^k$ and hence

\begin{eqnarray} \nonumber
\nonumber\mathbb{E}\left[\xi_{X^k}(u^k) |\widetilde{\mathcal{F}}^k_{T^k_{m-1}} \right] &=&\mathbb{E}\left[ \xi_{X^k}(u^k)  |\widetilde{\mathcal{F}}^k_{T^k_{m-1}} \right] \mathbbm{1}_{ \tilde{G}^k } =  \mathbb{E}\left[ \xi_{X^k}(u^k) \mathbbm{1}_{ \tilde{G}^k } |\widetilde{\mathcal{F}}^k_{T^k_{m-1}} \right]\\
\nonumber&=& \mathbb{E}\left[ \xi_{X^k}(g^k) \mathbbm{1}_{ \tilde{G}^k } |\widetilde{\mathcal{F}}^k_{T^k_{m-1}} \right]\\
\nonumber&=& \int_{\mathbb{W}_k} \mathbbm{1}_{ \tilde{G}^k } \mathbb{V}^{k}_{m}\big(\Xi^{k,g^k}_{m-1}, \mathfrak{X}^k_{m}(g^k_{m-1}, \Xi^{k,g^k}_{m-1}, w^k)\big) \nu^k(dw^k)\\
\nonumber&=& \int_{\mathbb{W}_k} \mathbbm{1}_{ \tilde{G}^k } \mathbb{V}^{k}_{m}\big(\Xi^{k,u^k}_{m-1}, \mathfrak{X}^k_{m}(u^k_{m-1}, \Xi^{k,u^k}_{m-1}, w^k)\big) \nu^k(dw^k)\\
\label{iqual_2}&=&
\int_{\mathbb{W}_k} \mathbb{V}^{k}_{m}\big(\Xi^{k,u^k}_{m-1}, \mathfrak{X}^k_{m}(u^k_{m-1}, \Xi^{k,u^k}_{m-1}, w^k)\big) \nu^k(dw^k) ~a.s.
\end{eqnarray}
Identities (\ref{iqual_1}) and (\ref{iqual_2}) allow us to conclude the proof.
\end{proof}

\begin{lemma}\label{measurabilityissue1}
The map
$$
(\mathbf{o}^k_{m-1},a^k_{m-1})\mapsto
\int_{\mathbb{W}_k} \mathbb{V}^{k}_{m}\big(\mathbf{o}^k_{m-1}, \mathfrak{X}^k_{m}(a^k_{m-1}, \mathbf{o}^k_{m-1}, w^k)\big) \nu^k(dw^k)
$$
is a Borel function from $\mathbb{H}^{m-1}_k\times\mathbb{A}$ to $\overline{\mathbb{R}}$.
\end{lemma}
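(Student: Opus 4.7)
The plan is to reduce the statement to the standard Fubini--Tonelli measurability result for integrals with respect to a fixed probability measure, and the only genuine work is checking joint Borel measurability of the integrand.

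First I would observe that $\mathbb{V}^{k}_m$ is a Borel function on $\mathbb{H}^{k,m}$. Indeed, by construction $\gamma^{k}_m:\mathbb{H}^{k,m}\rightarrow \mathbf{D}_{n,T}$ is Borel measurable when $\mathbf{D}_{n,T}$ is endowed with the topology of uniform convergence on $[0,T]$, and by Assumption (A1) the payoff $\xi:\mathbf{D}_{n,T}\rightarrow\mathbb{R}$ is $\gamma$-H\"older, hence continuous and in particular Borel measurable. Therefore $\mathbb{V}^{k}_m = \xi\circ\gamma^{k}_m$ is a Borel function from $\mathbb{H}^{k,m}=\mathbb{H}^{k,m-1}\times\mathbb{A}\times\mathbb{S}_k$ into $\mathbb{R}$.

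Next I would invoke the classical measurability result for parameter-dependent integrals. Since $\mathbb{S}_k$ is a Polish space and $\nu^{k}$ is a fixed probability measure on $\mathcal{B}(\mathbb{S}_k)$ (given explicitly in Proposition \ref{disintegrationLEMMA}) that does not depend on the parameter $(\mathbf{o}^k_{m-1},a^k_{m-1})$, the standard Fubini--Tonelli measurability statement applies: if $F:X\times Y\rightarrow\overline{\mathbb{R}}_+$ is jointly Borel measurable and $\mu$ is a $\sigma$-finite Borel measure on $Y$, then $x\mapsto\int_Y F(x,y)\,\mu(dy)$ is Borel measurable into $\overline{\mathbb{R}}_+$. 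Applying this to $F = (\mathbb{V}^{k}_m)^+$ and $F = (\mathbb{V}^{k}_m)^-$ separately, with $X = \mathbb{H}^{k,m-1}\times\mathbb{A}$, $Y=\mathbb{S}_k$ and $\mu = \nu^k$, one obtains that the two maps
\[
(\mathbf{o}^k_{m-1},a^k_{m-1})\mapsto \int_{\mathbb{S}_k}(\mathbb{V}^{k}_m)^{\pm}\bigl(\mathbf{o}^{k}_{m-1},a^k_{m-1},s^k_m,\tilde{i}^k_{m}\bigr)\,\nu^k(ds^k_m\,d\tilde{i}^k_m)
\]
are Borel measurable into $\overline{\mathbb{R}}_+$, and then so is their difference into $\overline{\mathbb{R}}$.

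The only point that requires a short remark is the well-definedness of the integral at every fixed parameter. Using the H\"older bound from Assumption (A1) one has $|\mathbb{V}^k_m(\mathbf{o}^k_m)| \le |\xi(0)| + \|\xi\|\,\|\gamma^k_m(\mathbf{o}^k_m)\|_\infty^\gamma$, so for each fixed $(\mathbf{o}^k_{m-1},a^k_{m-1})$ the integrand is dominated by an explicit Borel function of $(s^k_m,\tilde{i}^k_m)$; whether this dominating function is $\nu^k$-integrable or not does not affect the argument, since the claim is measurability into $\overline{\mathbb{R}}$, and we only need the two nonnegative integrals to be unambiguously defined, which they always are. The main (and essentially only) obstacle is verifying joint Borel measurability of $\mathbb{V}^{k}_m$ on the product space $\mathbb{H}^{k,m-1}\times\mathbb{A}\times\mathbb{S}_k$; once this is granted the rest is a direct application of Tonelli's theorem and the proof is complete.
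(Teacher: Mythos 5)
Your argument is correct and matches the paper's in substance: the paper simply invokes (the proof of) Proposition 7.29 in Bertsekas--Shreve, which is exactly the parameter-dependent-integral measurability result you apply, specialized to the constant kernel $\nu^k$; your preliminary observation that $\mathbb{V}^k_m=\xi\circ\gamma^k_m$ is Borel is also how the paper justifies measurability of the integrand. The splitting into positive and negative parts and the remark on well-definedness of the extended-real-valued integral are the standard ingredients of that proposition's proof, so nothing is missing.
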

\begin{proof}
We just need to imitate the proof of Prop. 7.29 in \cite{bertsekas}, so we omit the details.
\end{proof}

\begin{lemma}\label{iterUM}
Let $\mathbb{V}^{k}_{m-1}:\mathbb{H}^{m-1}_k\rightarrow \overline{\mathbb{R}}$ be the function defined by

$$\mathbb{V}^{k}_{m-1}(\mathbf{o}^{k}_{m-1}):=\sup_{a^k_{m-1}\in \mathbb{A}} \int_{\mathbb{W}_k} \mathbb{V}^{k}_{m}\big(\mathbf{o}^k_{m-1}, \mathfrak{X}^k_{m}(a^k_{m-1}, \mathbf{o}^k_{m-1}, w^k)\big) \nu^k(dw^k),$$
for $\mathbf{o}^{k}_{m-1}\in \mathbb{H}^{m-1}_k$. Then, $\mathbb{V}^{k}_{m-1}$ is upper semianalytic and for every $\epsilon> 0$, there exists an analytically measurable function $C^\epsilon_{k,m-1}:\mathbb{H}^{m-1}_k\rightarrow\mathbb{A}$ which realizes

\begin{equation}\label{1oiter1}
\mathbb{V}^{k}_{m-1}(\mathbf{o}^{k}_{m-1})\le
\int_{\mathbb{W}_k} \mathbb{V}^{k}_{m}\big(\mathbf{o}^k_{m-1}, \mathfrak{X}^k_{m}(C^\epsilon_{k,m-1}(\mathbf{o}^{k}_{m-1}), \mathbf{o}^k_{m-1}, w^k)\big) \nu^k(dw^k) +\epsilon,
\end{equation}
for every $\mathbf{o}^{k}_{m-1}\in \{\mathbb{V}^k_{m-1} < + \infty\}$.
\end{lemma}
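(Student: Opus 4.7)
The plan is to obtain the statement as a direct application of the classical measurable selection machinery for upper semianalytic functions, as developed in \cite{bertsekas}. The argument proceeds in three short steps.

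First, by Lemma \ref{measurabilityissue1}, the map
$$
\Psi^k(\mathbf{o}^k_{m-1},a^k_{m-1}):=\int_{\mathbb{S}_k}\mathbb{V}^{k}_m\bigl(\mathbf{o}^{k}_{m-1},a^k_{m-1},s^k_m,\tilde{i}^k_{m}\bigr)\nu^k(ds^k_md\tilde{i}^k_m)
$$
is Borel measurable from $\mathbb{H}^{k,m-1}\times\mathbb{A}$ into $\overline{\mathbb{R}}$, hence upper semianalytic. Since $\mathbb{A}$ is a nonempty compact subset of $\mathbb{R}^r$, it is a Borel space in the sense of Bertsekas and Shreve, and $\mathbb{H}^{k,m-1}=(\mathbb{A}\times\mathbb{S}_k)^{m-1}$ is a Borel space as well.

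Second, I would invoke Proposition 7.47 of \cite{bertsekas}: the partial supremum of an upper semianalytic function over a Borel space is upper semianalytic. Applied to $\Psi^k$, this yields that
$$
\mathbb{V}^{k}_{m-1}(\mathbf{o}^{k}_{m-1})=\sup_{a^k_{m-1}\in \mathbb{A}}\Psi^k(\mathbf{o}^k_{m-1},a^k_{m-1})
$$
is an upper semianalytic function on $\mathbb{H}^{k,m-1}$, establishing the first claim of the lemma.

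Third, to produce the analytically measurable $\epsilon$-selector $C^\epsilon_{k,m-1}:\mathbb{H}^{k,m-1}\rightarrow\mathbb{A}$, I would apply Proposition 7.50 of \cite{bertsekas} (the $\epsilon$-optimal selection theorem for upper semianalytic functions). That result yields an analytically measurable map $C^\epsilon_{k,m-1}$ satisfying
$$
\Psi^k\bigl(\mathbf{o}^{k}_{m-1},C^\epsilon_{k,m-1}(\mathbf{o}^{k}_{m-1})\bigr)\ge \mathbb{V}^{k}_{m-1}(\mathbf{o}^{k}_{m-1})-\epsilon
$$
on $\{\mathbb{V}^k_{m-1}<+\infty\}$, which is exactly (\ref{1oiter1}). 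The only nontrivial ingredient is verifying that the hypotheses of the selection theorem are in force, and these reduce to the Borel space structure of $\mathbb{H}^{k,m-1}\times\mathbb{A}$ together with the Borel (hence analytic) measurability of $\Psi^k$ provided by Lemma \ref{measurabilityissue1}; there is no genuine obstacle and the proof is therefore a short verification.
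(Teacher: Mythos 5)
Your proposal is correct and follows essentially the same route as the paper: Lemma \ref{measurabilityissue1} gives Borel (hence upper semianalytic) measurability of the integrated map, Proposition 7.47 of \cite{bertsekas} gives upper semianalyticity of the partial supremum, and Proposition 7.50 of \cite{bertsekas} supplies the analytically measurable $\epsilon$-selector on $\{\mathbb{V}^k_{m-1}<+\infty\}$. Nothing is missing.
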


\begin{proof}
The fact that $\mathbb{V}^{k}_{m-1}$ is upper semianalytic follows from Prop 7.47 in \cite{bertsekas} and Lemma \ref{measurabilityissue1} which say the map given by
$$f(\mathbf{o}^{k}_{m-1},a^k_{m-1})= \int_{\mathbb{W}_k} \mathbb{V}^{k}_{m}\big(\mathbf{o}^k_{m-1}, \mathfrak{X}^k_{m}(a^k_{m-1}, \mathbf{o}^k_{m-1}, w^k)\big) \nu^k(dw^k) $$
is a Borel function (hence upper semianalytic). By construction, $\mathbb{H}^{m-1}_k\times\mathbb{A}$ is a Borel space. Let

$$\mathbb{V}^{k}_{m-1}(\mathbf{o}^{k}_{m-1})= \sup_{a^k_{m-1}\in \mathbb{A}} f(\mathbf{o}^{k}_{m-1},a^k_{m-1}); \quad \mathbf{o}^{k}_{m-1}\in \mathbb{H}^{m-1}_k.$$
Prop 7.50 in \cite{bertsekas} yields the existence of an analytically measurable function $C^\epsilon_{k,m-1}:\mathbb{H}^{m-1}_k\rightarrow\mathbb{A}$ such that

$$f\big(\mathbf{o}^{k}_{m-1},C^\epsilon_{k,m-1}(\mathbf{o}^{k}_{m-1})\big)\ge \mathbb{V}^{k}_{m-1}(\mathbf{o}^{k}_{m-1})-\epsilon,$$
for every $\mathbf{o}^{k}_{m-1}\in \{\mathbb{V}^{k}_{m-1} < +\infty\}$.
\end{proof}

\begin{lemma}\label{iterDOIS}
If $\mathbb{H}^{m-1}_k = \{\mathbb{V}^k_{m-1} < + \infty\}$, then for every $\epsilon>0$ and $u^k\in U^{k,m}_0$, there exists a control $\phi^{k,\epsilon}_{m-1}\in U^{k,m}_{m-1}$ such that

\begin{equation}\label{iterdois3}
\mathbb{V}^{k}_{m-1}(\Xi^{k,u^k}_{m-1}) = V^k(T^k_{m-1}, u^k) \le \mathbb{E}\big[V^k(T^k_m, u^k\otimes_{m-1}\phi^{k,\epsilon}_{m-1}) |\mathcal{F}^k_{T^k_{m-1}}\big] + \epsilon~a.s.
\end{equation}
\end{lemma}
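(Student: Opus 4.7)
The plan is to proceed in two stages: first establish the aggregation identity $\mathbb{V}^k_{m-1}(\Xi^{k,u^k}_{m-1}) = V^k(T^k_{m-1}, u^k)$ a.s., and then build the $\epsilon$-optimal control via the analytic selector produced by Lemma \ref{iterUM}. Throughout, write $f(\mathbf{o}^k_{m-1},a) := \int_{\mathbb{S}_k}\mathbb{V}^k_m(\mathbf{o}^k_{m-1},a,s^k_m,\tilde{i}^k_m)\nu^k(ds^k_m d\tilde{i}^k_m)$, so that $\mathbb{V}^k_{m-1}(\mathbf{o}^k_{m-1}) = \sup_{a\in\mathbb{A}} f(\mathbf{o}^k_{m-1},a)$.

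For the easy inequality, fix $u^k \in U^{k,m}_0$ and an arbitrary $\phi^k\in U^{k,m}_{m-1}$. Applying Lemma \ref{conditionalexprep} to the concatenated control $u^k\otimes_{m-1}\phi^k$ gives $\mathbb{E}[\xi_{X^k}(u^k\otimes_{m-1}\phi^k)|\mathcal{F}^k_{T^k_{m-1}}] = f(\Xi^{k,u^k}_{m-1},\phi^k_{m-1})$ a.s., which is pointwise dominated by $\mathbb{V}^k_{m-1}(\Xi^{k,u^k}_{m-1})$ because $\phi^k_{m-1}\in\mathbb{A}$. Passing to the essential supremum over $\phi^k\in U^{k,m}_{m-1}$ yields $V^k(T^k_{m-1},u^k) \le \mathbb{V}^k_{m-1}(\Xi^{k,u^k}_{m-1})$ a.s.

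For the reverse direction, together with the claimed bound in the lemma, fix $\epsilon>0$ and invoke Lemma \ref{iterUM} to obtain an analytically measurable selector $C^\epsilon_{k,m-1}:\mathbb{H}^{k,m-1}\to\mathbb{A}$ satisfying $\mathbb{V}^k_{m-1}(\mathbf{o}^k_{m-1}) \le f(\mathbf{o}^k_{m-1},C^\epsilon_{k,m-1}(\mathbf{o}^k_{m-1})) + \epsilon$ on $\{\mathbb{V}^k_{m-1} < +\infty\}$, which by hypothesis is all of $\mathbb{H}^{k,m-1}$. Define the $\mathbb{A}$-valued random variable $\phi^{k,\epsilon}_{m-1}(\omega) := C^\epsilon_{k,m-1}(\Xi^{k,u^k}_{m-1}(\omega))$. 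Since analytically measurable functions are universally measurable, Lemma \ref{comp_fund} guarantees that $\phi^{k,\epsilon}_{m-1}$ is $\mathcal{F}^k_{T^k_{m-1}}$-measurable, hence the stepwise control $\phi^{k,\epsilon}(t) = \phi^{k,\epsilon}_{m-1}\mathds{1}_{\{T^k_{m-1}<t\le T^k_m\}}$ belongs to $U^{k,m}_{m-1}$. Composing the selector inequality with $\Xi^{k,u^k}_{m-1}$ and applying Lemma \ref{conditionalexprep} once more gives, almost surely,
\begin{equation*}
\mathbb{V}^k_{m-1}(\Xi^{k,u^k}_{m-1}) \le f(\Xi^{k,u^k}_{m-1},\phi^{k,\epsilon}_{m-1}) + \epsilon = \mathbb{E}\bigl[\xi_{X^k}(u^k\otimes_{m-1}\phi^{k,\epsilon}) \mid \mathcal{F}^k_{T^k_{m-1}}\bigr] + \epsilon.
\end{equation*}
Since $V^k(T^k_m, u^k\otimes_{m-1}\phi^{k,\epsilon}) = \xi_{X^k}(u^k\otimes_{m-1}\phi^{k,\epsilon})$ by the boundary condition, the right-hand side equals $\mathbb{E}[V^k(T^k_m, u^k\otimes_{m-1}\phi^{k,\epsilon})\mid\mathcal{F}^k_{T^k_{m-1}}] + \epsilon$, which is bounded by $V^k(T^k_{m-1},u^k) + \epsilon$ by definition of the essential supremum. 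Sending $\epsilon\downarrow 0$ closes the aggregation identity, and the displayed bound in the lemma follows at once.

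The main obstacle is purely measurability. The function $\mathbb{V}^k_{m-1}$ is only upper semianalytic, so Borel selection is unavailable and one must use the analytic selection theorem embedded in Lemma \ref{iterUM}. The finiteness hypothesis $\mathbb{H}^{k,m-1}=\{\mathbb{V}^k_{m-1}<+\infty\}$ is precisely what ensures the selector is defined everywhere on the history space. The second delicate point is guaranteeing that $\phi^{k,\epsilon}_{m-1}$ is $\mathbb{F}^k$-adapted despite the selector being merely universally measurable; this is what Lemma \ref{comp_fund} is designed to provide, and it is the bridge that converts a pointwise supremum on the deterministic space $\mathbb{H}^{k,m-1}$ into a genuine essential supremum over admissible controls.
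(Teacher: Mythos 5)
Your proposal is correct and follows essentially the same route as the paper's proof: the lower bound for $\mathbb{V}^k_{m-1}(\Xi^{k,u^k}_{m-1})$ via Lemma \ref{conditionalexprep} and the definition of the essential supremum, and the upper bound via the analytic selector of Lemma \ref{iterUM} composed with $\Xi^{k,u^k}_{m-1}$, with Lemma \ref{comp_fund} supplying $\mathcal{F}^k_{T^k_{m-1}}$-measurability and $\epsilon\downarrow 0$ closing the identity. No gaps.
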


\begin{proof} For each $\alpha^{k}_{m-1} \in U^{k,m}_{m-1}$, (\ref{1oiter_Geral}) yields  $\mathbb{E}\big[\xi_{X^k}(u^k\otimes_{m-1}\alpha^{k}_{m-1})|\mathcal{F}^k_{T^k_{m-1}}\big]$ equals (a.s) to

\[
\int_{\mathbb{W}_k} \mathbb{V}^{k}_{m}\big(\Xi^{k,u^k}_{m-1}, \mathfrak{X}^k_{m}(\alpha^k_{m-1}, \Xi^{k,u^k}_{m-1}, w^k)\big) \nu^k(dw^k) .
\] As a consequence, we obtain that

\begin{equation} \label{Ine_Fund_1}
\mathbb{V}^{k}_{m-1}(\Xi^{k,u^k}_{m-1}) \geq \mathbb{E}\big[\xi_{X^k}(u^k\otimes_{m-1}\alpha^{k}_{m-1})|\mathcal{F}^k_{T^k_{m-1}}\big] ~ a.s,
\end{equation} for every $\alpha^{k}_{m-1} \in U^{k,m}_{m-1}$. By composing $\mathbb{V}^{k}_{m-1}$ with $\Xi^{k,u^k}_{m-1}$, we obtain that $\mathbb{V}^{k}_{m-1}(\Xi^{k,u^k}_{m-1})$ is an $\mathcal{F}^k_{T^k_{m-1}}$-measurable function (see Lemma \ref{comp_fund}). Then, the definition of $\esssup$ and (\ref{Ine_Fund_1}) yield that

\begin{equation} \label{Ineq_1}
\mathbb{V}^{k}_{m-1}(\Xi^{k,u^k}_{m-1}) \geq \esssup_{\theta^{k}_{m-1} \in U^{k,m}_{m-1}}
\mathbb{E} \Bigg[ V^{k} \left(T^k_{m}, u^{k,m-1} \otimes_{m-1} \theta^{k}_{m-1} \right)    \mid \mathcal{F}^k_{T^k_{m-1}}\Bigg] = V^k(T^k_{m-1}, u^k)~ a.s.
\end{equation}

For $\epsilon>0$, let $C^\epsilon_{k,m-1}:\mathbb{H}^{k,m-1}\rightarrow\mathbb{A}$ be the analytically measurable function which realizes (\ref{1oiter1}). We take $\phi^{k,\epsilon}_{m-1}:=C^{\epsilon}_{k,m-1}(\Xi^{k,u^k}_{m-1})$ as the composition of an analytically measurable function with an $\mathcal{F}^k_{T^k_{m-1}}$-measurable one. In this case, by Lemma \ref{comp_fund}, we know that $C^{\epsilon}_{k,m-1}\circ \Xi^{k,u^k}_{m-1}$ is an $\mathcal{F}^k_{T^k_{m-1}}$-measurable function. This shows that $\phi^{k,\epsilon}_{m-1}$ is an admissible control. By (\ref{1oiter1}), we have

\begin{eqnarray}
\nonumber \mathbb{V}^{k}_{m-1}(\Xi^{k,u^k}_{m-1}) &\leq& \int_{\mathbb{W}_k} \mathbb{V}^{k}_{m}\big(\Xi^{k,u^k}_{m-1}, \mathfrak{X}^k_{m}(\phi^{k,\epsilon}_{m-1}, \Xi^{k,u^k}_{m-1}, w^k)\big) \nu^k(dw^k) + \epsilon \\
\nonumber&=& \mathbb{E}\big[\xi_{X^k}(u^k\otimes_{m-1}\phi^{k,\epsilon}_{m-1})|\mathcal{F}^k_{T^k_{m-1}}\big] + \epsilon\\
\label{Ineq_3}&=& \mathbb{E}\big[V^k(T^k_m, u^k\otimes_{m-1}\phi^{k,\epsilon}_{m-1}) |\mathcal{F}^k_{T^k_{m-1}}\big] + \epsilon\\
\nonumber&\leq& \esssup_{\alpha^{k}_{m-1} \in U^{k,m}_{m-1}}
\mathbb{E} \Bigg[ V^{k} \left(T^k_{m}, u^{k,m-1} \otimes_{m-1} \alpha^{k}_{m-1} \right)    \mid \mathcal{F}^k_{T^k_{m-1}}\Bigg] + \epsilon ~~a.s.
\end{eqnarray}

As $\epsilon > 0$ is arbitrary, we conclude that

\begin{equation}  \label{Ineq_2}
\mathbb{V}^{k}_{m-1}(\Xi^{k,u^k}_{m-1}) \leq \esssup_{\alpha^{k}_{m-1} \in U^{k,m}_{m-1}}
\mathbb{E} \Bigg[ V^{k} \left(T^k_{m}, u^{k,m-1} \otimes_{m-1} \alpha^{k}_{m-1} \right)    \mid \mathcal{F}^k_{T^k_{m-1}}\Bigg] = V^k(T^k_{m-1}, u^k)~ a.s.
\end{equation}
Hence, (\ref{Ineq_1}) and (\ref{Ineq_2}) yield

\[
\mathbb{V}^{k}_{m-1}(\Xi^{k,u^k}_{m-1}) = V^k(T^k_{m-1}, u^k) ~ a.s, ~ ~ u^k\in U^{k,m}_0.
\]
As a consequence of (\ref{Ineq_3}), we also obtain that

\[
V^k(T^k_{m-1}, u^k) = \mathbb{V}^{k}_{m-1}(\Xi^{k,u^k}_{m-1}) \leq \mathbb{E}\big[V^k(T^k_m, u^k\otimes_{m-1}\phi^{k,\epsilon}_{m-1}) |\mathcal{F}^k_{T^k_{m-1}}\big] + \epsilon,
\]
and this concludes the proof.
\end{proof}

We are now able to iterate the argument as follows. From a backward argument, we can define the sequence of functions $\mathbb{V}^{k}_j:\mathbb{H}^{j}_k\rightarrow\overline{\mathbb{R}}$

\begin{equation}\label{Vkfunction}
\mathbb{V}^{k}_{j}(\mathbf{o}^{k}_{j}):= \sup_{a^k_{j}\in \mathbb{A}} \int_{\mathbb{W}_k} \mathbb{V}^{k}_{j+1}\big(\mathbf{o}^k_{j}, \mathfrak{X}^k_{j+1}(a^k_{j}, \mathbf{o}^k_{j}, w^k)\big) \nu^k(dw^k),
\end{equation}
for $\mathbf{o}^k_{j}\in \mathbb{H}^{j}_k$ and $j=m-1,\ldots, 1$.

\begin{lemma}\label{measurabilityissue2}
For each $j=m-1, \ldots, 1$, the map

$$(\mathbf{o}^k_{j},a^k_{j})\mapsto  \int_{\mathbb{W}_k} \mathbb{V}^{k}_{j+1}\big(\mathbf{o}^k_{j}, \mathfrak{X}^k_{j+1}(a^k_{j}, \mathbf{o}^k_{j}, w^k)\big) \nu^k(dw^k)
$$
is upper semianalytic from $\mathbb{H}^{j}_k\times \mathbb{A}$ to $\overline{\mathbb{R}}$.
\end{lemma}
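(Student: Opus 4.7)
The plan is to establish this by a backward induction on $j\in\{m-1,\dots,1\}$, running jointly with the induction that defines the value functions $\mathbb{V}^k_j$ in (\ref{Vkfunction}). The whole argument is an application of the classical descriptive-set-theoretic machinery from \cite{bertsekas}: upper semianalyticity is preserved under (i) partial integration against a Borel stochastic kernel, and (ii) suprema over a Borel action space (which correspond to projection of analytic sets). Both are exactly the tools already invoked in Lemmas \ref{measurabilityissue1} and \ref{iterUM}.

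The base case is $j=m-1$. There, $\mathbb{V}^k_m=\xi\circ\gamma^k_m$ is Borel by construction of the skeleton, and Lemma \ref{measurabilityissue1} shows the corresponding integral map is even Borel on $\mathbb{H}^{k,m-1}\times\mathbb{A}$, hence trivially upper semianalytic; Lemma \ref{iterUM} then yields that $\mathbb{V}^k_{m-1}$ is upper semianalytic as the supremum of a Borel function over the compact Borel set $\mathbb{A}$. For the inductive step, I would assume that for some $j+1\in\{2,\dots,m-1\}$ the function $\mathbb{V}^k_{j+1}\colon\mathbb{H}^{k,j+1}\to\overline{\mathbb{R}}$ is upper semianalytic. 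Identifying $\mathbb{H}^{k,j+1}=\mathbb{H}^{k,j}\times\mathbb{A}\times\mathbb{S}_k$, the map
\[
(\mathbf{o}^k_j,a^k_j,s^k_{j+1},\tilde{i}^k_{j+1})\mapsto \mathbb{V}^k_{j+1}(\mathbf{o}^k_j,a^k_j,s^k_{j+1},\tilde{i}^k_{j+1})
\]
is upper semianalytic on the Borel product space $\mathbb{H}^{k,j}\times\mathbb{A}\times\mathbb{S}_k$.

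The crucial point is that $\nu^k$ is a Borel probability measure on $\mathbb{S}_k$ — this is guaranteed by the explicit disintegration formula in Proposition \ref{disintegrationLEMMA} — and hence it defines a (constant in $(\mathbf{o}^k_j,a^k_j)$) Borel stochastic kernel from $\mathbb{H}^{k,j}\times\mathbb{A}$ to $\mathbb{S}_k$. By Proposition 7.48 in \cite{bertsekas}, the partial integration of an upper semianalytic function against a Borel stochastic kernel is upper semianalytic, so
\[
(\mathbf{o}^k_j,a^k_j)\mapsto\int_{\mathbb{S}_k}\mathbb{V}^k_{j+1}(\mathbf{o}^k_j,a^k_j,s^k_{j+1},\tilde{i}^k_{j+1})\nu^k(ds^k_{j+1}d\tilde{i}^k_{j+1})
\]
is upper semianalytic on $\mathbb{H}^{k,j}\times\mathbb{A}$. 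This is exactly the claim of the lemma. To keep the induction alive, I would close the loop by invoking Proposition 7.47 of \cite{bertsekas}: since the sup over the Borel compact $\mathbb{A}$ of the above upper semianalytic function equals the projection onto $\mathbb{H}^{k,j}$ of an analytic set, $\mathbb{V}^k_j$ defined in (\ref{Vkfunction}) is itself upper semianalytic, and the induction proceeds.

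The argument is genuinely short because all the work is absorbed into the Bertsekas--Shreve framework; the only substantive point to verify carefully is that $\nu^k$ is Borel (not merely analytic), which is where Proposition \ref{disintegrationLEMMA} is essential — otherwise partial integration need not preserve upper semianalyticity. I do not anticipate any real obstacle beyond being explicit about this kernel-measurability step and about the identification $\mathbb{H}^{k,j+1}=\mathbb{H}^{k,j}\times\mathbb{A}\times\mathbb{S}_k$ that allows one to regard $\mathbb{V}^k_{j+1}$ as an upper semianalytic function of the parameters $(\mathbf{o}^k_j,a^k_j)$ and the integration variable $(s^k_{j+1},\tilde{i}^k_{j+1})$.
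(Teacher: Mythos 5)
Your proposal is correct and follows essentially the same route as the paper, which simply invokes the Bertsekas--Shreve machinery (the paper's proof reads ``the same argument as in Lemma \ref{measurabilityissue1} applies'', i.e.\ integration against the Borel kernel $\nu^k$ preserves the relevant measurability, run backwards alongside the recursion (\ref{Vkfunction})). You are merely more explicit in noting that for $j<m-1$ the integrand is only upper semianalytic, so one needs Prop.\ 7.48 of \cite{bertsekas} rather than the Borel version Prop.\ 7.29, which is a correct and welcome clarification rather than a deviation.
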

\begin{proof}
The same argument used in the proof of Lemma \ref{measurabilityissue1} applies here. We omit the details.
\end{proof}

\begin{proposition}\label{detarg}
The function $\mathbb{V}^{k}_j:\mathbb{H}^{j}_k\rightarrow\overline{\mathbb{R}}$ is upper semianalytic for each $j=m-1,\ldots, 1$. Moreover, for every $\epsilon>0$, there exists an analytically measurable function $C^\epsilon_{k,j}:\mathbb{H}^{j}_k\rightarrow\mathbb{A}$ such that

$$
\mathbb{V}^{k}_j(\mathbf{o}^k_{j})\le \int_{\mathbb{W}_k} \mathbb{V}^{k}_{j+1}\big(\mathbf{o}^k_{j}, \mathfrak{X}^k_{j+1}(C^\epsilon_{k,j}(\mathbf{o}^k_{j}), \mathbf{o}^k_{j}, w^k)\big) \nu^k(dw^k) + \epsilon,
$$
for every $\mathbf{o}^k_{j}\in \{\mathbb{V}^{k}_j < + \infty\}$, where $j=m-1,\ldots, 1$.
\end{proposition}
\begin{proof}
We just repeat the argument of the proof of Lemma \ref{iterUM} jointly with Lemma \ref{measurabilityissue2}.
\end{proof}

We are now able to define the value function at step $j=0$ as follows

$$\mathbb{V}^{k}_0:= \sup_{a^k_{0}\in \mathbb{A}} \int_{\mathbb{W}_k} \mathbb{V}^{k}_{1}\big(\mathbf{o}^k_{0}, \mathfrak{X}^k_{1}(a^k_{0}, \mathbf{o}^k_{0}, w^k)\big) \nu^k(dw^k),$$ for any initial value  $x_0 \in \mathbb{R}^d$. Therefore, if $\mathbb{V}^k_0 < +\infty$ then for $\epsilon>0$, there exists $C^\epsilon_{k,0}\in \mathbb{A}$ which realizes

$$
\mathbb{V}^{k}_0 \leq  \int_{\mathbb{W}_k} \mathbb{V}^{k}_{1}\big(\mathbf{o}^k_{0}, \mathfrak{X}^k_{1}(C^\epsilon_{k,0}, \mathbf{o}^k_{0}, w^k)\big) \nu^k(dw^k) + \epsilon.
$$

\begin{proposition}\label{AGREGATION}
For each $j=m-1, \ldots, 0$ and a control $u^k\in U^{k,m}_0$, we have

$$
\mathbb{V}^{k}_{j}(\Xi^{k,u^k}_{j}) = V^k(T^k_{j}, u^k) ~a.s.
$$
Let $C^{\epsilon}_{k,j}:\mathbb{H}^{j}_k\rightarrow \mathbb{A}; j=m-1,\ldots, 0$ be the functions given in Proposition \ref{detarg}. Observe that $C^\epsilon_{k,j}$ can be computed from (\ref{Vkfunction}). If $\mathbb{H}^{j}_k = \{\mathbb{V}^k_j < +\infty\}$, for $j=m-1, \ldots, 0$, then for every $\epsilon>0$, there exists a control $u^{k,\epsilon}_j$ defined by

\begin{equation}\label{explicitcontrol}
u^{k,\epsilon}_j := C^\epsilon_{k,j}(\Xi^{k,u^k}_j); j=m-1, \ldots, 0
\end{equation}
which realizes

\begin{equation}\label{vkineq}
V^k(T^k_j,u^k)\le \mathbb{E}\big[V^k(T^k_{j+1}, u^k\otimes_{j}u^{k,\epsilon}_j)|\mathcal{F}^k_{T^k_j}  \big] + \epsilon~a.s,
\end{equation}
for every $j=m-1, \ldots, 0$.
\end{proposition}
\begin{proof}
The statements for $j=m-1$ hold true due to (\ref{iterdois3}) in Lemma \ref{iterDOIS}. Now, by using Proposition \ref{detarg} and a backward induction argument, we conclude the proof.
\end{proof}

We are now able to construct an $\epsilon$-optimal control by means of the dynamic programming principle.

\begin{proposition}\label{epsiloncTH}
Let $(\xi, (X^k)_{k\ge 1})$ be an admissible pair w.r.t the control problem (\ref{discreteOCPROBLEM}). Then, $\phi^{*,k,\epsilon} = (\phi^{k,\eta_k(\epsilon)}_0, \phi^{k,\eta_k(\epsilon)}_1, \ldots, \phi^{k,\eta_k(\epsilon)}_{m-1})$ constructed via (\ref{explicitcontrol}) is $\epsilon$-optimal, where $\eta_k(\epsilon) = \frac{\epsilon}{e(k,T)}$. In other words, for every $\epsilon> 0$ and $k\ge 1$, $\phi^{*,k,\epsilon}\in U^{k,m}_0$ realizes

\begin{equation}\label{epsiloncTHzero}
\sup_{u^k\in U^{k,m}_0}\mathbb{E}\big[\xi_{X^k}(u^k)\big]\le \mathbb{E}\big[\xi_{X^k}(\phi^{*,k,\epsilon})\big]+ \epsilon.
\end{equation}

\end{proposition}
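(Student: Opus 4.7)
The plan is to iterate the one-step $\eta_k(\epsilon)$-suboptimality inequality (\ref{vkineq}) from Proposition \ref{AGREGATION} exactly $m = e(k,T)$ times, so that the telescoping yields a cumulative error of $m\cdot \eta_k(\epsilon) = \epsilon$ at the terminal horizon. Concretely, I would first build $\phi^{*,k,\epsilon}$ recursively: since $\Xi^{k,\cdot}_0$ carries no control information, $\phi^{k,\eta_k(\epsilon)}_0 := C^{\eta_k(\epsilon)}_{k,0}$ is a deterministic point of $\mathbb{A}$; assuming $(\phi^{k,\eta_k(\epsilon)}_0,\ldots, \phi^{k,\eta_k(\epsilon)}_{j-1}) \in U^{k,j}_0$ has been defined, set
\begin{equation*}
\phi^{k,\eta_k(\epsilon)}_j := C^{\eta_k(\epsilon)}_{k,j}\big(\Xi^{k,\phi^{*,k,\epsilon}}_j\big).
\end{equation*}
Because $\Xi^{k,\phi^{*,k,\epsilon}}_j$ depends on $\phi^{*,k,\epsilon}$ only through its first $j$ coordinates (already constructed) and $C^{\eta_k(\epsilon)}_{k,j}$ is analytically measurable on the whole of $\mathbb{H}^{k,j}$ by admissibility, Lemma \ref{comp_fund} guarantees $\phi^{k,\eta_k(\epsilon)}_j$ is $\mathcal{F}^k_{T^k_j}$-measurable. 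Hence the recursion produces a legitimate element $\phi^{*,k,\epsilon} \in U^{k,m}_0$.

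Next I would apply (\ref{vkineq}) with $u^k = \phi^{*,k,\epsilon}$ at each step $j$: since $V^k(T^k_j, u^k)$ depends only on $u^{k,j-1}$, the construction above ensures that the selector $u^{k,\eta_k(\epsilon)}_j$ appearing in (\ref{vkineq}) coincides with $\phi^{k,\eta_k(\epsilon)}_j$ a.s., so that
\begin{equation*}
V^k(T^k_j, \phi^{*,k,\epsilon}) \le \mathbb{E}\bigl[V^k(T^k_{j+1}, \phi^{*,k,\epsilon}) \mid \mathcal{F}^k_{T^k_j}\bigr] + \eta_k(\epsilon) \quad a.s.,
\end{equation*}
for every $j = 0, 1, \ldots, m-1$. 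Taking expectations and using the tower property telescopes the inequality, and combining with the terminal identity $V^k(T^k_m, \phi^{*,k,\epsilon}) = \xi_{X^k}(\phi^{*,k,\epsilon})$ and the definition $V^k(0) = \sup_{u^k\in U^{k,m}_0}\mathbb{E}[\xi_{X^k}(u^k)]$ yields
\begin{equation*}
\sup_{u^k\in U^{k,m}_0}\mathbb{E}[\xi_{X^k}(u^k)] = \mathbb{E}[V^k(0,\phi^{*,k,\epsilon})] \le \mathbb{E}[\xi_{X^k}(\phi^{*,k,\epsilon})] + m\,\eta_k(\epsilon) = \mathbb{E}[\xi_{X^k}(\phi^{*,k,\epsilon})] + \epsilon,
\end{equation*}
which is exactly (\ref{epsiloncTHzero}).

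The main obstacle is not the telescoping itself, which is purely algebraic, but rather keeping the recursive construction of $\phi^{*,k,\epsilon}$ inside the admissible class $U^{k,m}_0$. The analytic (not Borel) measurability of the selectors $C^{\eta_k(\epsilon)}_{k,j}$ forces us to lean on the universal-measurability machinery: admissibility ensures $\{\mathbb{V}^k_j < +\infty\} = \mathbb{H}^{k,j}$, so the selectors are defined everywhere, and Lemma \ref{comp_fund} promotes each composition with $\Xi^{k,\phi^{*,k,\epsilon}}_j$ to a genuine $\mathcal{F}^k_{T^k_j}$-measurable random variable taking values in $\mathbb{A}$. Once this is in place, the pointwise inequality (\ref{vkineq}) can be applied with the same $u^k=\phi^{*,k,\epsilon}$ at every step and the rest is immediate.
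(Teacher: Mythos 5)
Your proposal is correct and follows essentially the same route as the paper: the paper also constructs $\phi^{*,k,\epsilon}$ recursively from the selectors $C^{\eta_k(\epsilon)}_{k,j}$ and then iterates the one-step inequality (\ref{vkineq}) from $j=0$ to $j=m-1$, telescoping via the tower property to accumulate the error $m\,\eta_k(\epsilon)=\epsilon$. The only cosmetic difference is that the paper writes the intermediate value processes with partial concatenations $\phi^{k,\eta_k(\epsilon)}_0\otimes_1\cdots\otimes_{j-1}\phi^{k,\eta_k(\epsilon)}_{j-1}$, whereas you evaluate them directly at the full control $\phi^{*,k,\epsilon}$, which is equivalent since $V^k(T^k_j,u^k)$ depends only on $u^{k,j-1}$.
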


\begin{proof}
Fix $\epsilon>0$ and let $\eta_k(\epsilon) = \frac{\epsilon}{m}$, where we recall $m=e(k,T)$. The candidate for an $\epsilon$-optimal control is
$$\phi^{*,k,\epsilon} = (\phi^{k,\eta_k(\epsilon)}_0, \phi^{k,\eta_k(\epsilon)}_1, \ldots, \phi^{k,\eta_k(\epsilon)}_{m-1}),$$
where $\phi^{k,\eta_k(\epsilon)}_i; i=m-1, \ldots, 0$ are constructed via (\ref{explicitcontrol}). Let us check it is indeed $\epsilon$-optimal. From (\ref{vkineq}), we know that

\begin{equation}\label{epsiloncTH1}
\sup_{u^k\in U^{k,e(k,T)}_0}\mathbb{E}\big[ \xi_{X^k}(u^k)\big]\le \mathbb{E}\big[V^k(T^k_1, \phi^{k,\eta_k(\epsilon)}_0 )\big] + \eta_k(\epsilon)
\end{equation}
and

\begin{equation}\label{epsiloncTH2}
V^k(T^k_1, \phi^{k,\eta_k(\epsilon)}_0 )\le \mathbb{E}\big[V^k(T^k_2,\phi^{k,\eta_k(\epsilon)}_0\otimes_1 \phi^{k,\eta_k(\epsilon)}_1)|\mathcal{F}^k_{T^k_1}\big] + \eta_k(\epsilon)~a.s.
\end{equation}
Inequalities (\ref{epsiloncTH1}) and (\ref{epsiloncTH2}) yield

\begin{equation}\label{epsiloncTH3}
\sup_{u^k\in U^{k,e(k,T)}_0}\mathbb{E}\big[ \xi_{X^k}(u^k)\big]\le \mathbb{E}\big[V^k(T^k_2,\phi^{k,\eta_k(\epsilon)}_0\otimes_1 \phi^{k,\eta_k(\epsilon)}_1)\big] + 2\eta_k(\epsilon),
\end{equation}
where (\ref{vkineq}) implies $V^k(T^k_j,\phi^{k,\eta_k(\epsilon)}_0\otimes_1 \phi^{k,\eta_k(\epsilon)}_1\otimes_2 \ldots \otimes_{j-1} \phi^{k,\eta_k(\epsilon)}_{j-1})$ is less than or equals to

\begin{equation}\label{epsiloncTH4}
\mathbb{E}\big[ V^k(T^k_{j+1},\phi^{k,\eta_k(\epsilon)}_0\otimes_1\phi^{k,\eta_k(\epsilon)}_1\otimes_2 \ldots \otimes_{j} \phi^{k,\eta_k(\epsilon)}_{j})|\mathcal{F}^k_{T^k_j}\big]+\eta_k(\epsilon)~a.s,
\end{equation}
for $j=1, \ldots, m-1$. By iterating the argument starting from (\ref{epsiloncTH3}) and using (\ref{epsiloncTH4}), we conclude (\ref{epsiloncTHzero}).
\end{proof}

\section{Simulation of the increments $\Delta A^k(T^k_n)$} \label{explanation}
In this section, we briefly discuss one fundamental step in our numerical scheme: The simulation of the increments $\Delta A^k(T^k_n)$ for a $d$-dimensional Brownian motion $B = (B^1, \ldots, B^d)$ in the pseudo-algorithm described in Section \ref{numersection}. Recall that in the one-dimensional case, the distribution of $\Delta A^k(T^k_n)$ follows a $\frac{1}{2}$-Bernoulli distribution concentrated at $\{-\epsilon_k,\epsilon_k\}$. In the multi-dimensional case, this is not the case and one has to work with the conditional distribution of $\Delta A^k(T^k_n)$ given $\Delta T^k_n$ for $d>1$. By the strong Markov property, it is sufficient to discuss this conditional distribution only at the first step $n=1$. We also restrict the discussion to the case $d=2$ because the analysis for $d>2$ is totally similar. In the sequel, the Gaussian kernel is denoted by 
$$\phi_t(x):= \frac{1}{\sqrt{2\pi t}}e^{-\frac{-x^2}{2t}}$$ 
for $t>0$ and $x \in \mathbb{R}$.



\begin{remark}
For a given Borel set $E$, 
\begin{equation}\label{cond0}
\mathbb{P}\Big[ B^{2}(T^k_1) \in E\big| T^k_1= T^{k,1}_1=t\Big] = \mathbb{P}\Big[B^2(t) \in E\big| \sup_{0\le s\le t}|B^2(s)| < \epsilon_k\Big].
\end{equation}

for every $0 < t<T$.    
\end{remark}
\begin{proof}
Just observe that 

$$\{T^k_1= T^{k,1}_1 = t\} = \{T^{k,1}_1=t\}\cap \Big\{\sup_{0\le u\le T^k_1}|B^2(u)| < \epsilon_k\Big\} = \{T^{k,1}_1=t\}\cap \Big\{\sup_{0\le u\le t}|B^2(u)| < \epsilon_k\Big\}.$$
Then, by the independence of $B^1$ and $B^2$, for each Borel set $E$, we have 
\begin{eqnarray*}
\mathbb{P}\Big[ B^{2}(T^k_1) \in E\big| T^k_1= T^{k,1}_1=t\Big]&=& \mathbb{P}\Big[ B^{2}(t) \in E\big| T^k_1=T^{k,1}_1=t\Big]\\
&=& \mathbb{P}\Big[ B^{2}(t) \in E\big| \sup_{0\le u\le t} |B^2(u)|< \epsilon_k\Big].
\end{eqnarray*}
\end{proof}
Of course, similarly, we have
$$\mathbb{P}\Big[ B^{1}(T^k_1) \in E\big| T^k_1=T^{k,2}_1=t\Big] = \mathbb{P}\Big[B^1(t) \in E\big| \sup_{0\le s\le t}|B^1(s)| < \epsilon_k\Big],$$ 
for every Borel set $E$ and $t>0$.      


By the very definition, 

\begin{equation}\label{conds1}
\mathbb{P}\Big[ B^{2}(t) \in E\big| \sup_{0\le u\le t} |B^2(u)|< \epsilon_k\Big]
= \frac{\mathbb{P}[B^2(t) \in E, \sup_{0\le s\le t}|B^2(s)|  < \epsilon_k]}{\mathbb{P}[\sup_{0\le s\le t}|B^2(s)|  < \epsilon_k]}.
\end{equation}
Of course, the law (\ref{conds1}) is concentrated on the small open set $(-\epsilon_k,\epsilon_k)$. The simulation of (\ref{conds1}) is based on the trivariate distribution of the final, minimal and maximal value (see e.g. \cite{borodin} p. 174)

\begin{equation}\label{cond2}
\mathbb{P}\Big[B^2(t) \in dx, \sup_{0\le s\le t}|B^2(s)|  < \epsilon_k\Big]=\sum_{n=-\infty}^{+\infty} \Big[ \phi_t (x-4n\epsilon_k) -  \phi_t(x-2\epsilon_k -4n\epsilon_k)\Big]dx, 
\end{equation}
for $-\epsilon_k < x < \epsilon_k$. Observe we can write the density in (\ref{cond2}) as follows (see e.g Th 2.2 in \cite{riedel})

$$
\sum_{n=-\infty}^{+\infty} \Big[ \phi_t (x-4n\epsilon_k) -  \phi_t(x-2\epsilon_k -4n\epsilon_k) \Big]
$$
\begin{equation}\label{cond4}
= \phi_t(x) + L(t,x,\epsilon_k), 
\end{equation}
where 
\begin{eqnarray*} 
L(t,x,\epsilon_k)&:=& \sum_{m=1}^\infty [\phi_t(x+4m\epsilon_k) - \phi_t(x+2\epsilon_k+4(m-1)\epsilon_k) ]\\
& +& \sum_{m=1}^\infty [\phi_t(x-4m\epsilon_k) - \phi_t(x-2\epsilon_k-4(m-1)\epsilon_k) ],
\end{eqnarray*}
for $-\epsilon_k < x < \epsilon_k$.

The simulation of the conditional distribution (\ref{cond0}) in the pseudo-algorithm described in Section \ref{numersection} should be based on the density given by (\ref{cond2}) and (\ref{cond4}). This can be achieved by using classical methods. We refer the reader to \cite{riedel} and \cite{becker} for detailed analyses on the numerical schemes for the final, minimal and maximal values of the Brownian motion via the density (\ref{cond4}). 


In the context of the present article, for a given $t>0$, we should simulate (\ref{cond0}) via (\ref{cond4}) in the regime $\epsilon_k \downarrow 0$. The main difficulty lies on the series given by (\ref{cond4}). The evaluation of (\ref{cond4}) has to be done by approximating the series. The numerical experiments given in Section \ref{numersection} indicate that working only with the Gaussian kernel $\phi_t(x)$ in (\ref{cond4}) is enough to have an excellent approximation. Indeed, the leading term of the density described by (\ref{cond2}) and (\ref{cond4}) is actually $\phi_t(x)$ for $-\epsilon_k <  x < \epsilon_k$ in the regime $\epsilon_k\downarrow 0$. 

Fix $t>0$. Observe that   

$$0<\sum_{m=1}^n [\phi_t(x+2\epsilon_k+4(m-1)\epsilon_k) - \phi_t(x+4m\epsilon_k) ]$$
\begin{equation}\label{cond5}
\le \frac{1}{\sqrt{2\pi t}}\Big\{e^{-\frac{(x+2\epsilon_k)^2}{2t}} - e^{-\frac{(x+4\epsilon_k n)^2}{2t}} \Big\}\le \frac{1}{\sqrt{2\pi t}} \Big\{e^{-\frac{\epsilon_k^2}{2t}} - e^{-\frac{\epsilon^2_k(4n+1)^2}{2t}}\Big\} ,
\end{equation}
for every $ x \in (-\epsilon_k, \epsilon_k)$, $k,n\ge 1$. Similarly, 

$$0< \sum_{m=1}^n [\phi_t(x-2\epsilon_k-4(m-1)\epsilon_k) - \phi_t(x-4m\epsilon_k) ]$$
\begin{equation}\label{cond6}
\le \frac{1}{\sqrt{2\pi t}}\Big\{e^{-\frac{(x-2\epsilon_k)^2}{2t}} - e^{-\frac{(x-4\epsilon_k n)^2}{2t}} \Big\}\le \frac{1}{\sqrt{2\pi t}} \Big\{e^{-\frac{\epsilon_k^2}{2t}} - e^{-\frac{\epsilon^2_k(4n+1)^2}{2t}}\Big\},
\end{equation}
for every $ x \in (-\epsilon_k, \epsilon_k)$, $k,n\ge 1$. 
\begin{lemma}\label{uniseries}
Let $\epsilon_k = 2^{-k}$ for $k\ge 1$. The two series in $L(t,x,\epsilon_k)$ converge uniformly w.r.t. $x \in (-\epsilon_k,\epsilon_k)$ and $k\ge 1$. 
\end{lemma}
\begin{proof}
Without loss of generality, we assume $t=1$. A second-order Taylor expansion on the function $a\mapsto e^{\frac{-a^2}{2}}$ yields 

\begin{eqnarray*}
\nonumber\Big|e^{\frac{-(x+4n 2^{-k}-2^{-k+1})^2}{2}} - e^{\frac{-(x+4n 2^{-k})^2}{2}} \Big| &=& 2^{-k+1} (x+4n2^{-k} -2^{-k+1})e^{\frac{-(x+4n2^{-k} -2^{-k+1})^2}{2}}\\
&+& \frac{1}{2} \big(1-c^2_{x,k,n}\big) e^{\frac{- c^2_{x,k,n}}{2}} 2^{-2k+2}\\
&<& 2^{-2k+1}(4n-1)e^{-\frac{2^{-2k}(4n-3)^2}{2}}\\
&<& 4 \Big(ne^{-n} + \Big(\frac{1}{n}\Big)^{\alpha}\Big),
\end{eqnarray*}
for every $x \in (-2^{-k}, 2^{-k})$, $k,n\ge 1$. Here, we can choose e.g. $\alpha=1.01$ and $c_{x,k,n}$ is a number depending on $x,k,n$ such that $0 < x+4n 2^{-k} -2^{-k+1} < c_{x,k,n} <  x+4n 2^{-k}$. Then, 

$$\Big|e^{\frac{-(x+4n 2^{-k}-2^{-k+1})^2}{2}} - e^{\frac{-(x+4n 2^{-k})^2}{2}} \Big|\le 4 \Big(ne^{-n} + \Big(\frac{1}{n}\Big)^{1.01}\Big),$$
for every $x \in (-2^{-k}, 2^{-k})$, $k,n\ge 1$. Similar analysis can be made for 

$$\Big|e^{\frac{-(x-4n 2^{-k} +2^{-k+1})^2}{2}} - e^{\frac{-(x-4n2^{-k})^2}{2}} \Big|.$$ 
By the Weierstrass M-test, we conclude the proof.





\end{proof}
Lemma \ref{uniseries} jointly with (\ref{cond5}) and (\ref{cond6}) yield 

$$
\sup_{-2^{-k} < x < 2^{-k}}|L(t,x,2^{-k})|\rightarrow 0,
$$
as $k\rightarrow +\infty$. 

Having said that, the simulation of (\ref{cond0}) described in Section \ref{numersection} is based on a reduced version of the full density (\ref{cond4}), where we set $L(t,x,2^{-k})=0$ for $t>0$, $x \in (-2^{-k},2^{-k})$ and $k\ge 1$. More precisely, we actually use the truncated Gaussian distribution with parameters $(0,t, -2^{-k},2^{-k})$ whose density is described by

$$\frac{\phi_t(x)}{\int_{(-2^{-k},2^{-k})}\phi_t(y)dy}\mathds{1}_{(-2^{-k},2^{-k})}(x),~x\in \mathbb{R}.$$

\end{appendix}

\begin{funding}
The second author was supported by Math-AmSud (grant 88887.197425/2018-00) and FAPDF (grant 00193-00001506/2021-12).  
\end{funding}

\begin{acks}[Acknowledgments]
The authors would like to thank two anonymous referees for their very important and constructive comments that improved the
quality of this paper.
\end{acks}

\end{document}